\begin{document}

\renewcommand{\subjclassname}{%
  \textup{2010} Mathematics Subject Classification}

\newcommand{\V}{{\mathcal V}}      
\renewcommand{\O}{{\mathcal O}}
\newcommand{\LL}{\mathcal L}
\newcommand{\Ext}{\hbox{\rm Ext}}
\newcommand{\Tor}{\hbox{\rm Tor}}
\newcommand{\Hom}{\hbox{Hom}}
\newcommand{\Proj}{\hbox{Proj}}
\newcommand{\GrMod}{\hbox{GrMod}}
\newcommand{\grmod}{\hbox{gr-mod}}
\newcommand{\tors}{\hbox{tors}}
\newcommand{\rank}{\hbox{rank}}
\newcommand{\End}{\hbox{{\rm End}}}
\newcommand{\Der}{\hbox{Der}}
\newcommand{\GKdim}{\hbox{GKdim}}
\newcommand{\im}{\hbox{im}}
\renewcommand{\ker}{\hbox{ker }}
\newcommand{\ev}{\hbox{ev}}
\renewcommand{\sp}{\hbox{span}}
\renewcommand{\char}{\hbox{char }}

\newcommand{\lonto}{{\protect \longrightarrow\!\!\!\!\!\!\!\!\longrightarrow}}

\renewcommand{\c}{\cancel}
\newcommand{\g}{\frak g}
\newcommand{\h}{\frak h}
\newcommand{\fp}{\frak p}
\newcommand{\m}{{\mu}}
\newcommand{\gl}{{\frak g}{\frak l}}
\newcommand{\ssl}{{\frak s}{\frak l}}

\newcommand{\ds}{\displaystyle}
\newcommand{\s}{\sigma}
\renewcommand{\l}{\lambda}
\renewcommand{\a}{\alpha}
\renewcommand{\b}{\beta}
\newcommand{\G}{\Gamma}
\renewcommand{\gg}{\gamma}
\newcommand{\z}{\zeta}
\newcommand{\e}{\epsilon}
\renewcommand{\d}{\delta}
\newcommand{\p}{\rho}
\renewcommand{\t}{\tau}

\newcommand{\C}{{\Bbb C}}
\newcommand{\N}{{\Bbb N}}
\newcommand{\Z}{{\Bbb Z}}
\newcommand{\ZZ}{{\Bbb Z}}
\newcommand{\Q}{{\Bbb Q}}
\renewcommand{\k}{\mathbb K}

\newcommand{\K}{{\mathcal K}}

\newcommand{\rowxy}{(x\ y)}
\newcommand{\colxy}{ \left({\begin{array}{c} x \\ y \end{array}}\right)}
\newcommand{\scolxy}{\left({\begin{smallmatrix} x \\ y
\end{smallmatrix}}\right)}

\renewcommand{\P}{{\Bbb P}}

\newcommand{\la}{\langle}
\newcommand{\ra}{\rangle}
\newcommand{\tensor}{\otimes}
\newcommand{\tsr}{\tensor}

\newtheorem{thm}{Theorem}[section]
\newtheorem{lemma}[thm]{Lemma}
\newtheorem{cor}[thm]{Corollary}
\newtheorem{prop}[thm]{Proposition}

\theoremstyle{definition}
\newtheorem{defn}[thm]{Definition}
\newtheorem{notn}[thm]{Notation}
\newtheorem{ex}[thm]{Example}
\newtheorem{rmk}[thm]{Remark}
\newtheorem{rmks}[thm]{Remarks}
\newtheorem{note}[thm]{Note}
\newtheorem{example}[thm]{Example}
\newtheorem{problem}[thm]{Problem}
\newtheorem{ques}[thm]{Question}
\newtheorem{thingy}[thm]{}

\newcommand{\onto}{{\protect \rightarrow\!\!\!\!\!\rightarrow}}
\newcommand{\donto}{\put(0,-2){$|$}\put(-1.3,-12){$\downarrow$}{\put(-1.3,-14.5) 

{$\downarrow$}}}

\newcounter{letter}
\renewcommand{\theletter}{\rom{(}\alph{letter}\rom{)}}

\newenvironment{lcase}{\begin{list}{~~~~\theletter} {\usecounter{letter}
\setlength{\labelwidth4ex}{\leftmargin6ex}}}{\end{list}}

\newcounter{rnum}
\renewcommand{\thernum}{\rom{(}\roman{rnum}\rom{)}}

\newenvironment{lnum}{\begin{list}{~~~~\thernum}{\usecounter{rnum}
\setlength{\labelwidth4ex}{\leftmargin6ex}}}{\end{list}}


\thispagestyle{empty}

\title{The Koszul property for graded twisted tensor products}

\keywords{Koszul algebras, quadratic algebras, twisted tensor products, twisting maps}

\author[  Conner, Goetz ]{ }

  \subjclass[2010]{16S37, 16W50}
\maketitle

\begin{center}

\vskip-.2in Andrew Conner \\
\bigskip

Department of Mathematics and Computer Science\\
Saint Mary's College of California\\
Moraga, CA 94575\\
\bigskip

 Peter Goetz \\
\bigskip

Department of Mathematics\\ Humboldt State University\\
Arcata, California  95521
\\ \ \\

\end{center}

\setcounter{page}{1}

\thispagestyle{empty}

\vspace{0.2in}

\begin{abstract}

Let $\k$ be a field. Let $A$ and $B$ be connected $\N$-graded $\k$-algebras. Let $C$ denote a twisted tensor product of $A$ and $B$ in the category of connected $\N$-graded $\k$-algebras. The purpose of this paper is to understand when $C$ possesses the Koszul property, and related questions. We prove that if $A$ and $B$ are quadratic, then $C$ is quadratic if and only if the associated graded twisting map has a property we call the unique extension property. We show that $A$ and $B$ being Koszul does not imply $C$ is Koszul (or even quadratic), and we establish sufficient conditions under which $C$ is Koszul whenever both $A$ and $B$ are. We analyze the unique extension property and the Koszul property in detail in the case where $A=\k[x]$ and $B=\k[y]$.

\end{abstract}

\bigskip




\section{Introduction}

Though the study of factorization structures in mathematics has a long history, there has been much recent interest in very general questions about the dual notions of product and factorization for associative algebras over a field.  C\v{a}p, Schichl, and Van\v{z}ura introduced in \cite{Cap} a very general notion of product for a pair of associative algebras, or equivalently, a notion of factorization, called the twisted tensor product.
%
%
This product is analogous to the Zappa-Sz\'{e}p product for groups, see \cite{Brin} for example, and the bicrossed product for Hopf algebras, \cite{Agore2014}. Commutative tensor products, Ore extensions, and smash products of algebras can all be realized as particular cases of twisted tensor products. 
Homological and ring-theoretic properties of these particular cases are well-studied, and a number of recent papers establish such properties of twisted tensor products in the graded setting; see \cite{JPS}, \cite{ShepWi}, \cite{WaWi} for example.

One particularly important homological property of a graded algebra is the Koszul property, introduced by Priddy in \cite{Priddy}. Let $\k$ be a field. Let $A$ be an $\N$-graded $\k$-algebra. We further assume $A$ is connected ($\dim_{\k} A_0=1$) and locally finite dimensional  ($\dim_{\k} A_i<\infty$ for all $i\ge 0$). 
For a $\k$-vector space $V$, let $T(V)$ denote the tensor algebra generated by $V$.

The graded algebra $A$ is called \emph{one-generated} if the canonical multiplication map $\pi:T(A_1)\to A$ is surjective. Let $I=\la (\ker \pi)\cap A_1\tsr A_1\ra$ be the ideal of $T(A_1)$ generated by the degree 2 elements of the kernel of $\pi$. If $A$ is one-generated, the \emph{quadratic part of $A$} is the algebra $q(A)=T(A_1)/I$. If $A\cong q(A)$ then $A$ is called \emph{quadratic}. 

The graded algebra $A$ is called \emph{Koszul} if the trivial module $\k=A_0=A/A_+$  admits a resolution 
$$\cdots\to P_3\to P_2\to P_1\to P_0 \to \k\to 0$$
such that each $P_i$ is a graded free left $A$-module generated in degree $i$. 

Two well-known facts follow immediately from this definition: 
\begin{enumerate}
\item every Koszul algebra is quadratic, 
\item every one-generated, free algebra is Koszul. 
\end{enumerate}

Many important quadratic algebras arising naturally in mathematics are Koszul, and Koszul algebras satisfy a powerful duality property that sometimes underlies deep connections between apparently unrelated problems. One famous example is the duality between the symmetric algebra and the exterior algebra underlying the so-called \emph{BGG correspondence} \cite{BGG}. For additional motivation, background, and examples of Koszul algebras, we encourage the interested reader to consult the book \cite{PP}, which remains the most comprehensive and useful treatment of the theory of quadratic and Koszul algebras we have read. 

Determining whether or not a given quadratic algebra has the Koszul property can be quite difficult. Given the nice homological properties possessed by Koszul algebras, it is therefore natural to study the degree to which the Koszul property is preserved under notions of factorization and product. Our interest in this problem arose from questions about Koszulity of certain smash products of graded Hopf algebras in \cite{CG}. The purpose of the current paper is not to answer questions raised in \cite{CG}, but rather to address some basic questions about the behavior of the Koszul property with regard to twisted tensor products in the category of connected graded algebras. 

A twisted tensor product of $\k$-algebras $A$ and $B$ can be fruitfully studied by considering an associated $\k$-linear \emph{twisting map} $\t:B\tsr A\to A\tsr B$. The twisted tensor product algebra associated to $\t$ is denoted $A\tsr_{\t} B$. In the classical case of a commutative tensor product, $\t(b\tsr a)=a\tsr b$. See Section 2 for more details. If $A$ and $B$ are graded, the $\k$-linear map $\t$ must be a map of graded vector spaces. This raises the question of how the space $A\tsr B$ is graded. If $A$ and $B$ are $\N$-graded, then $A\tsr B$ is $\N\times \N$ graded in the obvious way, but $A\tsr B$ is also $\N$-graded by the K\"{u}nneth grading: $(A\tsr B)_i = \oplus_{p+q=i} A_p\tsr B_q$. The commutative tensor product obviously fits either choice of grading.

The existing literature on graded twisting maps all but exclusively deals with the very restrictive $\N\times \N$ grading, assuming $\t(B_j\tsr A_i)=A_i\tsr B_j$, which further implies $\t$ is invertible. This enables one to prove theorems that closely parallel those for commutative tensor products. However, this restriction also excludes many algebras of interest, including Ore extensions with nontrivial derivations and many smash products of algebras. Indeed, assuming $A$ and $B$ are augmented $\k$-algebras, a twisting map $\t:B\tsr A\to A\tsr B$ induces the structure of a left $B$-module on $A$ via $(1_A \tsr \e_B) \t$ and that of a right $A$-module on $B$ via $(\e_A \tsr 1_B)\t$, where $\e_A$ and $\e_B$ are the augmentation maps on $A$ and $B$, respectively. Requiring $\t$ to preserve the $\N\times \N$ grading forces both of these induced actions to be trivial, leading readily to the following theorem. 

\begin{thm}[(\cite{JPS}, Theorem 4.18), (\cite{WaWi}, Proposition 1.8)]
\label{pure graded Koszul}
Suppose that $A$ and $B$ are Koszul algebras. Let $\t: B \tsr A \to A \tsr B$ be an invertible graded twisting map such that $\t(B_i \tsr A_j) = A_j \tsr B_i$ for all $i, j$. Then the twisted tensor product algebra $A \tsr_{\t} B$ is Koszul.
\end{thm}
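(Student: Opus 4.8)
The plan is to exhibit an explicit linear free resolution of the trivial left $C$-module $\k$, where $C = A\tsr_{\t} B$, obtained as a twisted tensor product of the minimal Koszul resolutions of $\k$ over $A$ and over $B$. Linearity of such a resolution is precisely the Koszul property, so producing it both establishes that $C$ is Koszul and, a posteriori, that $C$ is quadratic.

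First I would record the structural consequences of the hypothesis $\t(B_i\tsr A_j)=A_j\tsr B_i$. Via the algebra inclusion $A\hookrightarrow C$ the left $A$-action is untwisted, $a\cdot(a'\tsr b)=aa'\tsr b$, so $C\cong A\tsr B$ is \emph{free} (hence flat) as a left $A$-module, and $C\tsr_A\k\cong B$; symmetrically $C$ is free as a right $B$-module. Let $P^A_\bullet\to\k$ and $P^B_\bullet\to\k$ be the minimal linear free resolutions guaranteed by Koszulity, say $P^A_i=A\tsr U_i$ with $U_i$ concentrated in internal degree $i$, and $P^B_j=B\tsr W_j$ with $W_j$ concentrated in degree $j$.

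Next I would assemble the candidate complex $Q_\bullet$ with $Q_n=\bigoplus_{i+j=n} C\tsr U_i\tsr W_j$, a free left $C$-module generated in degree $i+j=n$. Its differential is the sum of an $A$-part induced by $\partial^A$ (made $C$-linear through the untwisted left $A$-action) and a $B$-part induced by $\partial^B$, the latter twisted by $\t$ so as to move the $B$-differential past the $A$-tensorands. I expect the main obstacle to be exactly here: verifying, using the twisting-map axioms of Section 2 together with the invertibility of $\t$, that $\partial$ is a well-defined map of left $C$-modules and that $\partial^2=0$. This is the only step where the fine structure of $\t$ is used, and the sign-and-twist bookkeeping is the delicate point.

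Finally, to prove exactness I would filter $Q_\bullet$ by the $B$-homological degree $j$ and pass to the associated spectral sequence. Since $\t$ preserves the bigrading, the twist lives entirely in the $j$-lowering part, so the $E_0$-differential is the untwisted $A$-part $\partial^A\tsr 1$; the column of fixed $j$ is $(C\tsr_A P^A_\bullet)\tsr_\k W_j$, whose homology—by flatness of $C$ over $A$ and the identification $C\tsr_A\k\cong B$—is $B\tsr W_j$ concentrated in $A$-degree $0$. The induced $d_1$ on $\bigoplus_j B\tsr W_j$ is then exactly the differential of $P^B_\bullet$, whose homology is $\k$ concentrated in bidegree $(0,0)$. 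Thus the spectral sequence degenerates at $E_2$ and $Q_\bullet\to\k$ is a resolution. As each $Q_n$ is generated in degree $n$, the resolution is linear, and therefore $C$ is Koszul.
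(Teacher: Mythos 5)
The paper never proves Theorem \ref{pure graded Koszul}: it is quoted from \cite{JPS} and \cite{WaWi}, and in fact the paper's own Theorem \ref{separableKoszul} \emph{invokes} it, so there is no internal proof to compare against. Your outline is the standard construction behind those citations (a resolution assembled from the two Koszul resolutions, with exactness checked by filtering out the $B$-homological direction), and the strategy is viable; however, as written it contains one genuine error. Your spectral sequence column at fixed $j$ is $C\tsr_A P^A_\bullet\tsr W_j$, and since each $P^A_i=A\tsr U_i$ is a free \emph{left} $A$-module, forming $C\tsr_A P^A_i$ uses the \emph{right} $A$-module structure of $C$; the homology of that column is $\Tor^A_*(C,\k)$ computed with $C$ as a right $A$-module. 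The untwisted action $a\cdot(a'\tsr b)=aa'\tsr b$ that you invoke is the \emph{left} action and gives left freeness, which is irrelevant to this computation: the right action is the twisted one, $(a\tsr b)\cdot a'=(\mu_A\tsr\mu_B)(a\tsr\t(b\tsr a')\tsr 1_B)$. Right freeness is true, but it is exactly where invertibility enters: the map $\psi:B\tsr A\to C$ given by $\psi(b\tsr a)=(1_A\tsr b)(a\tsr 1_B)=\t(b\tsr a)$ is a morphism of right $A$-modules from the free right $A$-module $B\tsr A$, and it is an isomorphism precisely because $\t$ is bijective. Likewise the identification $C\tsr_A\k\cong B$ needs $CA_+=A_+\tsr B$, whose inclusion $A_+\tsr B\subseteq CA_+$ uses surjectivity of $\t$ onto $A_+\tsr B$, not merely $\t(B_i\tsr A_j)\subseteq A_j\tsr B_i$. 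So invertibility is not just ``sign-and-twist bookkeeping'' in the differential, where you placed it; it is what makes your $E_1$ page correct.

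The second gap is that the step you yourself flag as the main obstacle --- well-definedness of the twisted differential --- is never carried out, and it is the real mathematical content. Concretely, with $U_i=\bigcap_{a} A_1^{\tsr a}\tsr I_2\tsr A_1^{\tsr(i-2-a)}$ the Koszul syzygy spaces of $A$, you must prove that the iterated twist carries $B_1\tsr U_i$ into $U_i\tsr B_1$; otherwise the $B$-part of $\partial$ does not even map $C\tsr U_i\tsr W_j$ into $C\tsr U_i\tsr W_{j-1}$, and neither $C$-linearity nor $\partial^2=0$ can be discussed. This does follow from your hypotheses, and the argument should be recorded: applying the twisting identity $\t(1_B\tsr\mu_A)=(\mu_A\tsr 1_B)(1_A\tsr\t)(\t\tsr 1_A)$ of Remark \ref{AltTwistingDef} to $B_1\tsr I_2$ and using $I_2\subseteq\ker\mu_A$ gives $(1_A\tsr\t)(\t\tsr 1_A)(B_1\tsr I_2)\subseteq\ker(\mu_A\tsr 1_B)$, while the bigrading hypothesis places that image inside $A_1\tsr A_1\tsr B_1$, so it lies in $I_2\tsr B_1$; iterating over the tensor factors handles every $U_i$. (This is precisely the kind of statement the paper proves, in a harder setting, in Lemma \ref{firstone}.) With these two repairs --- right $A$-freeness of $C$ via invertibility, and the $\t$-compatibility with the relation spaces --- your construction does produce a linear free resolution of $\k$ and the theorem follows.
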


It is well known that graded Ore extensions of Koszul algebras are Koszul, indicating this theorem holds more generally. On the other hand, there are simple examples showing that a graded twisted tensor product of Koszul algebras need not be Koszul (see Example \ref{non-Koszul example}). Thus, in this paper, we study the most general type of graded twisting map $\t: B \tsr A \to A \tsr B$. We do not insist that $\t$ be invertible, and we only require the K\"{u}nneth grading to be preserved: $\t(B \tsr A)_n \subseteq (A \tsr B)_n$ for all $n \geq 0$. We provide answers to the following questions.

\begin{enumerate}
\item When are graded twisted tensor products of quadratic algebras quadratic?

\item When are graded twisted tensor products of Koszul algebras Koszul?
\end{enumerate}

Here is an outline of the paper. In Section 2 we recall background, make relevant definitions and extensions of the results in \cite{Cap} and \cite{BorowiecMarcinek} to the graded setting, and discuss the use of filtrations for later use in proving the Koszul property.  

Section 3 is concerned with existence and uniqueness of graded twisting maps. To define a graded twisting map, it is often useful to work inductively. In Lemma \ref{TMEP} we characterize when a graded map which is \emph{twisting to degree $n$} may be extended (uniquely) to a graded map which is twisting to degree $n+1$. Theorem \ref{simpler extension condition} gives very useful and, in practice, checkable sufficient conditions for when a graded map defined to degree $n+1$ which is twisting to degree $n$ is also twisting to degree $n+1$. In the last part of Section 3 we construct a new large class of graded twisting maps $\t: B \tsr A \to A \tsr B$ where $A$ and $B$ are free algebras of arbitrary finite rank, then in Theorem \ref{inducedTau} we determine conditions for when a graded twisting map defined on free algebras induces a graded twisting map on algebras with relations.

In Section 4 we determine when a twisted tensor product of quadratic algebras is quadratic. If $A$ and $B$ are $\N$-graded $\k$-algebras, we say a graded twisting map $\t$ has the \emph{unique extension property} if for all graded twisting maps $\t':B\tsr A\to A\tsr B$ and all $n\in\N$ such that $\t'_i=\t_i$ for $i<n$, then $\t'_n=\t_n$. Our main result in this section is Theorem \ref{UEPquadratic}, which has the following corollary.

\begin{thm}[Corollary \ref{quadTTP}]
If $A$ and $B$ are quadratic, then $\t$ has the unique extension property if and only if $A \tsr_{\t} B$ is quadratic. 
\end{thm}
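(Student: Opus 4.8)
The plan is to connect the combinatorial condition (the unique extension property, UEP) to the algebraic condition (quadraticity of $C = A \tsr_{\t} B$) by analyzing, degree by degree, how much freedom there is in choosing a twisting map that agrees with $\t$ in low degrees. Since this is stated as a corollary of the main Theorem \ref{UEPquadratic}, my first step is to understand precisely what that theorem must say. I expect Theorem \ref{UEPquadratic} to establish that, when $A$ and $B$ are \emph{quadratic}, the twisting map $\t$ having the UEP is equivalent to some intrinsic characterization of the degree-$n$ components $\t_n$ being forced by the degree-2 data. The corollary should then follow by matching that characterization against the definition of quadraticity for $C$, namely that the relations of $C$ are generated in degree 2. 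Once the statement of Theorem \ref{UEPquadratic} is in hand, the proof of the corollary is likely to be a short bridge, so the real content is identifying and invoking the correct equivalent condition.

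First I would recall that a twisting map $\t: B \tsr A \to A \tsr B$ is determined in each Künneth degree $n$ by its component $\t_n : (B \tsr A)_n \to (A \tsr B)_n$, and that the twisted tensor product $C$ has underlying graded vector space $A \tsr B$ with multiplication built from $\t$. When $A$ and $B$ are quadratic, both are one-generated, so $C$ is one-generated with $C_1 = A_1 \oplus B_1$. The relations of $C$ in degree 2 come from the relations of $A$, the relations of $B$, and the degree-2 part $\t_2$ of the twisting map, which encodes the straightening rule $b \tsr a \mapsto \t_2(b \tsr a)$ for $a \in A_1$, $b \in B_1$. The key observation I would exploit is that quadraticity of $C$ is equivalent to requiring that every relation of $C$ in degree $n \geq 3$ be a consequence of these degree-2 relations; equivalently, the higher components $\t_n$ must be completely determined by $\t_2$ through the associativity (twisting) constraints.

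The central link is then: the quadratic algebra $q(C)$ generated by the degree-2 relations of $C$ carries a natural twisting map structure $\t'$ (inherited from $\t_2$ via the universal/inductive construction from Lemma \ref{TMEP}), and $C$ is quadratic precisely when $\t = \t'$ in all degrees. I would argue that $\t'$ agrees with $\t$ in degrees $< 3$ by construction, and that the failure of $\t$ to have the UEP produces an alternative twisting map agreeing with $\t$ in low degrees but differing in some degree $n$—yielding a genuine degree-$n$ relation of $C$ not implied by degree 2, hence non-quadraticity. Conversely, if $\t$ has the UEP, any two twisting maps agreeing through degree $n-1$ agree in degree $n$, forcing the higher relations to be determined, hence $C \cong q(C)$.

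The main obstacle I anticipate is making precise the correspondence between \emph{distinct twisting maps agreeing in low degree} and \emph{extra relations in $C$ beyond those generated in degree 2}. One must verify that the quadratic part $q(C)$ is itself realized as a twisted tensor product $A \tsr_{\t'} B$ for a bona fide twisting map $\t'$—this is where Lemma \ref{TMEP} and Theorem \ref{simpler extension condition} should do the heavy lifting, guaranteeing that the degree-2 data extends to a twisting map at all and controlling its higher components. The delicate point is that quadraticity compares $C$ with $q(C)$ as algebras, while the UEP compares twisting maps; I expect the proof to hinge on showing these two comparisons are literally the same comparison once one identifies the relations of $q(C)$ with the twisting map $\t'$ extending $\t_2$. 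Assuming Theorem \ref{UEPquadratic} supplies exactly this identification, the corollary follows by tracing the equivalences.
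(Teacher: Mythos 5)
There is a genuine gap, and it sits exactly at the point you flag as ``delicate.'' Your central link requires realizing $q(C)$, where $C = A\tsr_{\t}B$, as a twisted tensor product $A\tsr_{\t'}B$ for some twisting map $\t'$ extending $\t_2$. This is impossible precisely in the case that matters: if $C$ is \emph{not} quadratic, then $q(C)$ surjects onto $C$ with nonzero kernel, so $\dim q(C)_n > \dim C_n = \dim(A\tsr B)_n$ in some degree $n$, whereas any twisted tensor product of $A$ and $B$ has the Hilbert series of $A\tsr B$. Concretely, in the paper's Example \ref{non-Koszul example} ($A=\k[x]$, $B=\k[y]$, both quadratic, $C$ not quadratic), the only degree-$2$ relation of $C$ is $yx+xy-x^2-y^2$, so after the substitution $u=x-y$, $v=y$ one gets $q(C)\cong \k\la u,v\ra/\la u^2\ra$, which has dimension $5$ in degree $3$, while $\dim(A\tsr B)_3=4$; hence no twisting map $\t'$ whatsoever gives $q(C)\cong A\tsr_{\t'}B$. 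So the equivalence ``$C$ quadratic $\iff \t=\t'$'' cannot be set up, and deferring the verification to Theorem \ref{UEPquadratic} does not rescue it, because that theorem asserts something different: the unique extension property is equivalent to the ideal $I_{\t}$ of $\t$-relations in the free product $A\ast B$ being generated in degree $2$ ($\t$-quadraticity), a statement about $I_\t$ rather than about $q(C)$ carrying a twisting map.

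A second, independent gap is your reading of the unique extension property. You argue with full twisting maps in both directions (``the failure of the UEP produces an alternative twisting map\dots''; ``any two twisting maps agreeing through degree $n-1$ agree in degree $n$''). By Definition \ref{UEP definition}, the UEP quantifies over linear maps that are graded twisting only \emph{to degree} $n$; such partial maps need not extend to genuine twisting maps. Uniqueness tested only against full twisting maps is the strictly weaker notion the paper calls \emph{one-determined} (Definition \ref{one-determined definition}); Example \ref{1detNotUEP} shows one-determined does not imply UEP, and the paper explicitly states it is unknown whether the two coincide when $A$ and $B$ are quadratic. So even with the first gap repaired, your sketch would at best relate quadraticity of $C$ to one-determinedness, which yields neither direction of the stated biconditional. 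The paper's actual route is: (i) UEP $\Rightarrow$ $I_\t=I_\t^2$, by induction in $A\ast B$ using the maps $\d$ and $R$ of Lemma \ref{TMEP}, with surjectivity of $\d_{n+1}$ supplied by uniqueness; (ii) $A$, $B$ quadratic and $I_\t=I_\t^2$ $\Rightarrow$ $C$ quadratic, by assembling presentations; (iii) conversely, $A$, $B$, $C$ quadratic $\Rightarrow$ $I_\t=I_\t^2$, and then for a map $\t'$ twisting merely to degree $n$ and agreeing with $\t$ below degree $n$, the degree-$\le n$ isomorphism of Proposition \ref{presentation} applied to $(A\ast B)/I_{\t'}^n$ forces $\t'_n=\t_n$. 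Steps (i) and (iii) are exactly the parts your proposal has no mechanism for.
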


In Section 5 we consider the Koszul property for graded twisted tensor products. We start by proving, Theorem \ref{oneSidedKoszul}, that the twisted tensor product of Koszul algebras is Koszul for {\it one-sided} twisting maps (if $\t$ is one-sided, then one of the component algebras of the twisted tensor product is normal). We also prove, Proposition \ref{freeKoszul}, that for an arbitrary graded twisting map $\t$, that $A \tsr_{\t} B$ is Koszul if $A$ and $B$ are free algebras, and $A \tsr_{\t} B$ is quadratic. Example \ref{non-Koszul example} shows that the graded twisted tensor product of Koszul algebras need not be Koszul. The remainder of Section 5 is concerned with the introduction of a large class of two-sided twisting maps which we call {\it separable}. We determine, in Theorem \ref{separableKoszul}, sufficient conditions to ensure that $A \tsr_{\t} B$ is Koszul when $\t$ is separable.

\begin{thm}
Let $A$ and $B$ be quadratic algebras. Let $\t:B\tsr A\to A\tsr B$ be a separable graded twisting map. Assume $\t$ satisfies
\begin{enumerate}
\item $\pi_{A_0\tsr A_2\tsr B_1}(1\tsr \t)(\t_B\tsr 1)(B_1\tsr I_2)=0$, and 
\item  $\pi_{A_1\tsr B_2\tsr B_0}(\t\tsr 1)(1\tsr \t_A)(J_2\tsr A_1)=0.$
\end{enumerate}
Let $F$ denote the filtration on $A\tsr_{\t} B$ defined prior to Lemma \ref{extendingR}, and let $F^B$ denote its restriction to the subalgebra $B$. Let $\widetilde{B} = {\rm gr}^{F^B}(B)$. If $A$ is Koszul, the quadratic part of $\widetilde{B}$  is Koszul and $\widetilde{B}$ has no defining relations in degree 3, then $A\tsr_{\t} B$ is Koszul.
\end{thm}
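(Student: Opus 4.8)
The plan is to establish Koszulity of $A\tsr_\t B$ by reducing to the associated graded algebra with respect to the filtration $F$ and then applying the one-sided/separable structure together with the classical result that Koszulity is preserved under filtration when the associated graded algebra is Koszul. The guiding principle, from the theory of Koszul filtrations (cf.\ \cite{PP}), is that if $\mathrm{gr}^F(A\tsr_\t B)$ is Koszul and the filtration is compatible with the grading, then $A\tsr_\t B$ is itself Koszul. So the first step is to identify $\mathrm{gr}^F(A\tsr_\t B)$ explicitly. Since $\t$ is separable, I expect the filtration to "untwist" the cross relations between $A$ and $B$ in the associated graded, so that $\mathrm{gr}^F(A\tsr_\t B)$ decomposes as a twisted tensor product $A\tsr_{\bar\t}\widetilde{B}$ where $\bar\t$ is a \emph{simpler} (one-sided, or even untwisted) induced map and $\widetilde{B}=\mathrm{gr}^{F^B}(B)$. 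This is where hypotheses (1) and (2) enter: they are precisely the conditions guaranteeing that the separable twisting map degenerates in the associated graded to something tractable, and that no spurious cubic relations obstruct quadraticity.

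Next I would verify that $A\tsr_\t B$ is quadratic, which by the corollary to Theorem \ref{UEPquadratic} amounts to checking that $\t$ has the unique extension property; hypotheses (1) and (2) should feed directly into Theorem \ref{simpler extension condition} to secure this. Then I would analyze the associated graded algebra. The hypotheses on $\widetilde{B}$ — that its quadratic part is Koszul and that $\widetilde{B}$ has no defining relations in degree $3$ — are designed so that $\widetilde{B}$ is quadratic (a degree-$3$-relation-free algebra whose quadratic part is Koszul is forced to coincide with its quadratic part, hence is Koszul itself). Combined with $A$ being Koszul, I would then invoke the one-sided result Theorem \ref{oneSidedKoszul}: the key claim is that $\mathrm{gr}^F(A\tsr_\t B)$ is a \emph{one-sided} twisted tensor product of the Koszul algebras $A$ and $\widetilde{B}$, and therefore Koszul by that theorem.

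The final step is the filtration-to-Koszul transfer: having shown $\mathrm{gr}^F(A\tsr_\t B)$ is Koszul and that the filtration $F$ is exhaustive, separated, and compatible with the $\N$-grading, I would conclude $A\tsr_\t B$ is Koszul. One must check that the natural surjection from $\mathrm{gr}^F(A\tsr_\t B)$ onto $q(\mathrm{gr}^F(A\tsr_\t B))$ is an isomorphism and that the quadratic data of $A\tsr_\t B$ and its associated graded agree in the relevant degrees, so that the Koszul resolution lifts.

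\textbf{The main obstacle} I anticipate is \emph{not} the formal filtration argument but the precise identification of $\mathrm{gr}^F(A\tsr_\t B)$ as a one-sided twisted tensor product. The separability of $\t$ controls the shape of the twisting, but showing that hypotheses (1) and (2) force the associated graded twisting map to be exactly one-sided — rather than merely "simpler" — is delicate: it requires tracking how the filtration interacts with the compatibility conditions $\t$ must satisfy (the pentagon-type axioms from Section 2) and confirming these pass to $\mathrm{gr}^F$ without introducing new higher-order terms. In particular, verifying that $\widetilde{B}=\mathrm{gr}^{F^B}(B)$ really is the correct component algebra, and that the induced map on $A\tsr\widetilde{B}$ genuinely makes one factor normal, is the technical heart of the argument and where conditions (1) and (2) must be used most carefully.
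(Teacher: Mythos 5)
Your overall strategy coincides with the paper's: pass to the filtration $F$, realize the associated graded algebra as a twisted tensor product of $A$ with $\widetilde{B}$ along a bidegree-preserving (hence pure, hence one-sided) twisting map, conclude Koszulity of that algebra from Koszulity of $A$ and $\widetilde{B}$, and transfer back through Theorem \ref{filtrationTrick}. But the proposal defers exactly the steps that constitute the proof, and it misplaces where hypotheses (1) and (2) act. You never construct the induced twisting map: the paper takes $R=\pi_{A_1\tsr B_1}\circ\t|_{B_1\tsr A_1}$ and uses hypotheses (1) and (2), through Lemmas \ref{firstone} and \ref{secondone}, to verify the compatibility conditions of Lemma \ref{extendingR} against $I_2$ and against the relation space of $\widetilde{B}$, which must first be decomposed by filtration weight as $J_{(2,0,0)}/J_{(1,0,1)}\oplus J_{(1,0,1)}/J_{(0,0,2)}\oplus J_{(0,0,2)}$; only then does $R$ extend to a strongly graded twisting map $\widetilde{R}:\widetilde{B}\tsr A\to A\tsr\widetilde{B}$. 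This is the step you yourself flag as ``the main obstacle,'' but flagging it is not doing it. Two attributions are also wrong. Quadraticity of $A\tsr_\t B$ does not come from hypotheses (1) and (2) fed into Theorem \ref{simpler extension condition}: that theorem is a criterion for a map to \emph{be} twisting, not for uniqueness of extensions; quadraticity follows from separability alone via Theorem \ref{generalized separable maps have UEP} and Theorem \ref{UEPquadratic}. And your parenthetical principle that an algebra with no cubic defining relations and Koszul quadratic part must equal its quadratic part is false in general (e.g.\ $\k[x]/\la x^4\ra$); what makes $\widetilde{B}$ Koszul is Theorem \ref{filtrationTrick} applied to the quadratic algebra $B$ with the one-generated filtration $F^B$.

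The second genuine gap is the identification of ${\rm gr}^F(A\tsr_\t B)$. Even granting $\widetilde{R}$, what the presentation argument (Proposition \ref{presentation}) yields is $A\tsr_{\widetilde{R}}\widetilde{B}\cong q({\rm gr}^F(A\tsr_\t B))$, i.e.\ only the \emph{quadratic part} of the associated graded algebra. Your plan says one must check that the canonical surjection (which runs from $q({\rm gr}^F)$ onto ${\rm gr}^F$, not in the direction you wrote) is an isomorphism, but gives no mechanism. This is not a formality: it is precisely where one proves that ${\rm gr}^F(A\tsr_\t B)$ is quadratic, and without it Theorem \ref{filtrationTrick} cannot be invoked, since its hypotheses concern ${\rm gr}^F$ itself (no defining relations in degree $3$, Koszul quadratic part). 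The paper settles this by a Hilbert series comparison: $h_{{\rm gr}^F(A\tsr_\t B)}=h_Ah_B=h_Ah_{\widetilde{B}}=h_{A\tsr_{\widetilde{R}}\widetilde{B}}$, so the surjection from the quadratic part is an isomorphism. With these two steps supplied, your closing moves are sound: since $\widetilde{R}$ is strongly graded, hence pure, hence one-sided, either Theorem \ref{oneSidedKoszul} (your choice) or Theorem \ref{pure graded Koszul} (the paper's choice) gives Koszulity of the associated graded algebra, and Theorem \ref{filtrationTrick} then finishes the proof.
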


This theorem has the following immediate corollary.

\begin{cor}

Let $A$ be a Koszul algebra with quadratic relation space $I_2$, let $B$ be a free algebra, and suppose $\t: B \tsr A \to A \tsr B$ is a separable graded twisting map. Assume that $\pi_{A_0\tsr A_2\tsr B_1}(1\tsr \t)(\t_B\tsr 1)(B_1\tsr I_2)=0$. Then $A \tsr_{\t} B$ is Koszul.
\end{cor}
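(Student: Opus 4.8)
The plan is to deduce the corollary directly from Theorem \ref{separableKoszul} by specializing its right-hand factor to a free algebra. First I would record what freeness buys us. A free algebra $B=T(B_1)$ is one-generated with trivial relation ideal, hence quadratic; together with $A$ being quadratic (as every Koszul algebra is), this secures the standing quadraticity hypothesis of Theorem \ref{separableKoszul}. Moreover the degree-$2$ relation space $J_2$ of $B$ is zero. The separability of $\t$ is assumed, so the only work is to verify the two numbered conditions of the theorem together with its concluding hypotheses on $A$ and on $\widetilde{B}$.

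For the numbered conditions, hypothesis (1) is precisely the equation $\pi_{A_0\tsr A_2\tsr B_1}(1\tsr \t)(\t_B\tsr 1)(B_1\tsr I_2)=0$ imposed in the corollary, so nothing is required. Hypothesis (2), $\pi_{A_1\tsr B_2\tsr B_0}(\t\tsr 1)(1\tsr \t_A)(J_2\tsr A_1)=0$, holds vacuously: since $J_2=0$, the source $J_2\tsr A_1$ of the composite is the zero space, so its image is automatically $0$. The concluding hypothesis that $A$ is Koszul is part of the data of the corollary.

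This leaves the two conditions on the associated graded algebra $\widetilde{B}=\mathrm{gr}^{F^B}(B)$, namely that its quadratic part be Koszul and that it have no defining relations in degree $3$. The key claim is that freeness of $B$ passes to $\widetilde{B}$: I would show $\widetilde{B}$ is again free, whence its quadratic part is $\widetilde{B}$ itself, a free (hence Koszul) algebra with no defining relations in any degree, in particular none in degree $3$. Granting this, every hypothesis of Theorem \ref{separableKoszul} is met and the theorem gives that $A\tsr_\t B$ is Koszul. The one genuinely substantive step is the claim that $\widetilde{B}$ is free; I expect it to follow from the explicit construction of the filtration $F$ preceding Lemma \ref{extendingR}, whose restriction $F^B$ to the subalgebra $1\tsr B$ should be compatible with the grading of $B$, so that $\widetilde{B}\cong B$. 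Verifying this compatibility from the definition of $F$ is where the real content lies; the rest of the argument is the formal substitution of the defining properties of a free algebra into the hypotheses of the theorem.
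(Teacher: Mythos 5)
Your proposal is correct and matches the paper's own (implicit) proof: the corollary is presented there as an immediate specialization of Theorem \ref{separableKoszul}, exactly as you carry it out, with condition (2) vacuous because $J_2=0$ and condition (1) assumed. The one step you defer---that $\widetilde{B}$ is again free---does hold and is routine: since the filtration $F^B$ is one-generated, $\widetilde{B}=\mathrm{gr}^{F^B}(B)$ is one-generated and has the same Hilbert series as $B=T(B_1)$, so the canonical surjection $T(\widetilde{B}_1)\to \widetilde{B}$ is an isomorphism, whence $q(\widetilde{B})=\widetilde{B}$ is free (hence Koszul) and has no defining relations in degree $3$.
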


In Section 6 we analyze twisting maps $\t: B \tsr A \to A \tsr B$ where $A = \k[x]$ and $B = \k[y]$. Our work here is complemented by the work of Guccione, Guccione and Valqui in \cite{GGV}. We prove the following theorem.
\begin{thm} Suppose that $\t: B \tsr A \to A \tsr B$ is a graded twisting map and write $\t(y \tsr x) = ax^2 \tsr 1 + bx \tsr y + 1 \tsr cy^2$. If $1-ac \ne 0$ and $b \ne 0$ or $c \ne 0$, then $\t$ has the unique extension property and $A \tsr_{\t} B$ is quadratic, hence Koszul.
\end{thm}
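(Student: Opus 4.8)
The three conclusions are tied together by results already in hand. By Corollary \ref{quadTTP}, $\t$ has the unique extension property if and only if $A\tsr_\t B$ is quadratic, and since $A=\k[x]$ and $B=\k[y]$ are free algebras, Proposition \ref{freeKoszul} shows that once $A\tsr_\t B$ is quadratic it is automatically Koszul. The plan is therefore to prove the single assertion that $A\tsr_\t B$ is quadratic, and to read off the other two conclusions from these two results.

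First I would pin down the quadratic data. Writing $x=x\tsr 1$ and $y=1\tsr y$, the twisted multiplication applied to $\t(y\tsr x)=ax^2\tsr 1+bx\tsr y+1\tsr cy^2$ gives the relation $yx=ax^2+bxy+cy^2$ in $A\tsr_\t B$. Because $A\tsr_\t B$ has the K\"unneth Hilbert series $(1-t)^{-2}$, its degree-two part is $3$-dimensional, so this is, up to scalar, the only quadratic relation; hence the quadratic part of $A\tsr_\t B$ is $Q:=\k\la x,y\ra/\la yx-ax^2-bxy-cy^2\ra$ and the identity on generators induces a graded surjection $p\colon Q\lonto A\tsr_\t B$. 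As $\dim_\k (A\tsr_\t B)_n=n+1$ for all $n$ while $\dim_\k Q_n\ge n+1$, the algebra $A\tsr_\t B$ is quadratic exactly when $p$ is an isomorphism, that is, exactly when $\dim_\k Q_n=n+1$ for all $n$.

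Next I would show that the reduced monomials $\{x^iy^j: i+j=n\}$ span $Q_n$. Since these are closed under right multiplication by $y$, it is enough to rewrite each $y^jx$ as a $\k$-combination of reduced monomials, which I would do by induction on $j$ using the relation. Carrying out the computation, the $j$-th step amounts to solving one scalar equation $(1-a\nu_{j-1})\,y^jx\equiv(\text{reduced terms})$, where $\nu_{m}$ denotes the coefficient of $y^{m+1}$ in the reduced form of $y^{m}x$ (so $\nu_0=0$). Introducing $d_0=d_1=1$ and $d_{m+2}=(b+1)d_{m+1}-(ac+b)d_m$, one checks that $1-a\nu_{m-1}=d_m/d_{m-1}$, so the rewriting succeeds through all degrees precisely when $d_m\neq 0$ for every $m$. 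A direct computation gives $d_2=1-ac$, which is nonzero by hypothesis.

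The crux is to establish $d_m\neq 0$ for all $m$. Here I would exploit that $\t$ is an honest twisting map: in the \emph{existing} algebra $A\tsr_\t B$ each $y^mx$ already has a well-defined finite reduced expansion, whose top coefficient $\nu_m$ satisfies $(1-a\nu_{m-1})\nu_m=b\nu_{m-1}+c$ with $\nu_0=0$. I would then split on $ac+b=\det\!\left(\begin{smallmatrix} b & c\\ -a & 1\end{smallmatrix}\right)$. If $ac+b\neq 0$, then $1-a\nu_{m-1}=0$ would force both $\nu_{m-1}=1/a$ and $b\nu_{m-1}+c=0$, hence $ac+b=0$, a contradiction; thus $1-a\nu_{m-1}\neq 0$ and so $d_m\neq 0$ for all $m$. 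If $ac+b=0$, the recursion collapses to $\nu_m=c$ for all $m\ge 1$, whence $1-a\nu_{m-1}=1-ac\neq 0$ and $d_m=(1-ac)^{m-1}\neq 0$. In either case the rewriting never breaks down, so $\dim_\k Q_n=n+1$, $p$ is an isomorphism, and $A\tsr_\t B\cong Q$ is quadratic; Koszulity and the unique extension property follow as above. The main obstacle is exactly this last step, namely feeding the existence of $\t$ back into the combinatorics to force $d_m\neq 0$ for $m\ge 3$: the hypothesis $1-ac\neq 0$ secures the base case $d_2$ and the degenerate branch $ac+b=0$, while existence of $\t$ eliminates a breakdown in the branch $ac+b\neq 0$. (The remaining hypothesis $b\neq 0$ or $c\neq 0$ only excludes the one-sided maps already treated in Theorem \ref{oneSidedKoszul}.)
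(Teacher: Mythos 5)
Your proof is correct, but it runs the paper's logic in the opposite direction, and the two routes share the same core. The paper (Lemma \ref{abcLemma} and Theorem \ref{abcTheorem}) proves the unique extension property first and directly: for any $\t'$ twisting to degree $d+1$ and agreeing with $\t$ through degree $d$, the factorizations $y^ix^{d-i}\cdot x$ and $y^{d-1}\cdot y\cdot x$ force $(1-as_{d-1}^d)\t'(y^dx)=(1-as_{d-1}^d)\t(y^dx)$ and then agreement everywhere in degree $d+1$; quadraticity then comes from Theorem \ref{UEPquadratic} and Koszulity from Proposition \ref{freeKoszul}. You instead prove quadraticity first, by showing the reduced monomials $x^iy^j$ span the quadratic cover $Q$ and comparing against the Hilbert series of $A\tsr_\t B$, and only then deduce the unique extension property from the converse implication (quadratic $\Rightarrow$ UEP); note that this converse is not Corollary \ref{quadTTP} itself but the proposition that follows it in Section 4 (the Introduction bundles the two into one statement, so the slip is harmless). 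The essential content is identical in both proofs: your recursion $(1-a\nu_{m-1})\nu_m=b\nu_{m-1}+c$ for the top coefficients of $\t(y^m\tsr x)$ is exactly the paper's relation $(1-as_{d-1}^d)s_d^{d+1}=bs_{d-1}^d+c$ obtained from the same two factorizations, and your case split on $ac+b$ is a reorganization of the paper's reductio in Lemma \ref{abcLemma} (a vanishing pivot forces $b+ac=0$, which forces every top coefficient to equal $c$, contradicting $1-ac\neq 0$); in both arguments it is the existence of the honest twisting map that makes the top coefficients well-defined scalars satisfying the identity even at a potentially vanishing pivot, which you correctly isolate as the crux. What your route buys: it is more constructive, exhibiting the monomial basis $\{x^iy^j\}$ of $A\tsr_\t B$, and your sequence $d_{m+2}=(b+1)d_{m+1}-(ac+b)d_m$ generalizes the paper's polynomials $S_n(t)$ from the later $b=0$ analysis in Section 6 (set $b=0$, $t=ac$), a connection the paper itself does not draw. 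What the paper's route buys: it avoids any normal-form/spanning argument, since uniqueness is tested against arbitrary maps that are twisting only to a finite degree, and it makes transparent that the hypothesis $1-ac\neq 0$ alone suffices --- which your argument also shows, as your closing parenthetical observes.
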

We also consider when the definition $\t(y \tsr x) = x^2 \tsr 1 + 1 \tsr cy^2$ extends to a graded twisting map, culminating in a complete answer in Theorem 6.5 and Proposition 6.6. Our characterization exhibits a curious connection to the Catalan numbers.

Finally in Section 7 we consider three examples that illustrate the main theorems of the paper. Most importantly Example 7.4 is an example of a twisted tensor product that arises from a separable twisting map that is not isomorphic to any twisted tensor product coming from a one-sided twisting map. 

\section{Preliminaries}


As discussed above, this paper explores the transfer of the Koszul property between a twisted tensor product and its component subalgebras, which need not be normal. In this section we recall the relevant background.

\subsection{Twisted products and twisting maps}
%
%

Throughout the paper, let $\k$ denote a field. Tensor products taken with respect to $\k$ are denoted by $\tsr$. If $V$ is a $\k$-vector space, we write $1_V$ for the identity map on $V$; if $V$ is a unital $\k$-algebra we will abuse notation and also write $1_V \in V$ for the identity element.

Let $A$ and $B$ be (unital) $\k$-algebras with multiplication maps $\mu_A$ and $\mu_B$. Following \cite{Cap},  an \emph{internal twisted tensor product} of $A$ and $B$ is a triple $(C,i_A,i_B)$ where $C$ is a $\k$-algebra and  $i_A:A\to C$ and $i_B:B\to C$ are injective $\k$-algebra homomorphisms such that the $\k$-linear map $A\tsr B\to C$ given by $a\tsr b \mapsto i_A(a)i_B(b)$ is an isomorphism of $\k$-vector spaces. We say two internal twisted tensor products $(C,i_A,i_B)$ and $(C',i_A', i_B')$ of (graded) $\k$-algebras $A$ and $B$ are \emph{isomorphic} if there exist (graded) algebra homomorphisms $\alpha:A\to A$, $\beta:B\to B$, and $\gamma:C\to C'$ such that $\gamma i_A=i_A'\alpha$, $\gamma i_B=i_B'\beta$, and $\gamma$ is an isomorphism. 


We call a $\k$-linear map $\t:B\tsr A\to A\tsr B$ an \emph{algebra twisting map} if 
$\t(1_B\tsr a)=a\tsr 1_B$ and $\t(b\tsr 1_A)=1_A\tsr b$ for all $a\in A$ and $b\in B$ and if
$$\t(\mu_B\tsr \mu_A)=(\mu_A\tsr \mu_B)(1_A\tsr \t\tsr 1_B)(\t\tsr \t)(1_B\tsr \t\tsr 1_A).$$ This definition was introduced in \cite{Cap}. We will refer to the conditions $\t(1_B\tsr a)=a\tsr 1_B$ and $\t(b\tsr 1_A)=1_A\tsr b$ for all $a\in A$ and $b\in B$ as the \emph{unital twisting conditions}. For any $\k$-linear map $\t:B\tsr A\to A\tsr B$ we adopt Sweedler-type notation and write $\t(b\tsr a) = a_{\t} \tsr b_{\t}$.

If $A$ and $B$ carry a grading by the semigroup $\N$, the $\k$-linear tensor product $A\tsr B$ admits an $\N$-grading by the K\"{u}nneth formula
$$(A\tsr B)_{m} = \bigoplus_{k+l=m} A_{k}\tsr B_{l}.$$

More generally, if $V$ and $W$ are $\N$-graded $\k$-vector spaces, we grade $V \tsr W$ by the K\"{u}nneth formula.
Throughout this paper the term \emph{graded} will mean \emph{$\N$-graded}. A $\k$-linear map $f:V\to W$ is called \emph{graded} if it preserves the $\N$ grading: $f(V_i)\subseteq W_i$ for all $i\ge 0$. If an algebra twisting map $\t:B\tsr A\to A\tsr B$ is graded we call it a \emph{graded algebra twisting map}, or just \emph{graded twisting map}. If $C$ is a graded algebra, we write $C_+$ for $\displaystyle{\oplus_{i > 0} C_i}$.


If $V$ and $W$ are graded $\k$-vector spaces and $f: V \to W$ is a graded $\k$-linear map, we denote the degree $n$ component of $f$ by $f_n$ and define $f_{\leq n} = \bigoplus_{i=0}^n f_i$ and $f_{>n} = \bigoplus_{i>n} f_i$.


We say a graded $\k$-linear map $t:(B\tsr A)_{\le n} \to (A\tsr B)_{\le n}$ is \emph{graded twisting in degree $n$} if $t(1_B\tsr a)=(a\tsr 1_B)$ and $t(b\tsr 1_A)=(1_A\tsr b)$ for all $a\in A_n$ and $b\in B_n$ and
$$t_{n} (\mu_{B}\tsr \mu_{A})=(\mu_A\tsr \mu_B) (1_A\tsr t_{\le n}\tsr 1_B) (t_{\le n}\tsr t_{\le n})(1_B\tsr t_{\le n}\tsr 1_A)$$ as maps defined on $(B \tsr B \tsr A \tsr A)_n$. 

If $t$ is graded twisting in degree $i$ for all $i\le n$ we say $t$ is \emph{graded twisting to degree $n$}. Evidently, a $\k$-linear map $\t:B\tsr A\to A\tsr B$ is a graded twisting map if and only if the restriction of $\t$ to $(B\tsr A)_{\le n}$ is graded twisting to degree $n$ for all $n\ge 0$ (which is equivalent to graded twisting in degree $n$ for all $n\ge 0$). Throughout the paper, we adopt the convention of using $\t$ to describe potential graded twisting maps, that is, graded $\k$-linear maps of the form $B\tsr A\to A\tsr B$. We use $t$ to indicate a graded $\k$-linear map defined only on $(B\tsr A)_{\le n}\to (A\tsr B)_{\le n}$, which is potentially graded twisting to degree $n$.

%

\begin{rmk}
\label{AltTwistingDef}
Let $A$ and $B$ be $\k$-algebras and $\t:B\tsr A\to A\tsr B$ a $\k$-linear map. As noted in \cite{Cap} (Remark 2.4(1)) $\t$ is a twisting map if and only if the following identities hold.
\begin{align*}
\t(1_B\tsr \mu_A) &= (\mu_A\tsr 1_B)(1_A\tsr\t)(\t\tsr 1_A)\\
\t(\mu_B\tsr 1_A) &= (1_A\tsr \mu_B)(\t\tsr 1_B)(1_B\tsr \t)
\end{align*}
The reasoning in \cite{Cap} applied to homogeneous components of $\t$ in the graded setting shows that $t:(B\tsr A)_{\le n}\to (A\tsr B)_{\le n}$ is graded twisting to degree $n$ if and only if $t(1_B\tsr a)=(a\tsr 1_B)$ and $t(b\tsr 1_A)=(1_A\tsr b)$ for all $a\in A_{\leq n}$ and $b\in B_{\leq n}$, and for all $j \leq n$,
\begin{align*}
t_{j}(1_B\tsr \mu_A) &= (\mu_A\tsr 1_B)(1_A\tsr t_{\le j})(t_{\le j}\tsr 1_A)\\
t_{j}(\mu_B\tsr 1_A) &= (1_A\tsr \mu_B)(t_{\le j}\tsr 1_B)(1_B\tsr t_{\le j}).
\end{align*}
\end{rmk}

For later use we record the following proposition. The easy proof is left to the reader.

\begin{prop}
\label{isomorphism type of ttp}
Let $A$ and $B$ be graded $\k$-algebras, and let $\a: A \to A$ and $\b: B \to B$ be graded automorphisms. 

Suppose that $t: (B \tsr A)_{\leq n} \to (A \tsr B)_{\leq n}$ is a $\k$-linear map that is graded twisting to degree $n$. Define $t': (B \tsr A)_{\leq n} \to (A \tsr B)_{\leq n}$ by $t' = (\a \tsr \b) t(\b^{-1} \tsr \a^{-1})|_{(B \tsr A)_{\leq n}}$. Then $t'$ is graded twisting to degree $n$. Moreover, there exists a unique $\k$-linear extension of $t$, $\widehat{t}: (B \tsr A)_{\leq n+1} \to (A \tsr B)_{\leq n+1}$ that is graded twisting to degree $n+1$ if and only if there exists a unique $\k$-linear extension of $t'$, $\widehat{t'}: (B \tsr A)_{\leq n+1} \to (A \tsr B)_{\leq n+1}$ that is graded twisting to degree $n+1$. 

If $\t: B \tsr A \to A \tsr B$ is a graded twisting map, then $\t': B \tsr A \to A \tsr B$ defined by $\t' = (\a \tsr \b) \t(\b^{-1} \tsr \a^{-1})$ is a graded twisting map. Furthermore, the algebras $A \tsr_{\t} B$ and $A \tsr_{\t'} B$ are isomorphic as twisted tensor products of $A$ and $B$.
\end{prop}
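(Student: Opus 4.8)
The plan is to reduce all three assertions to a single observation: conjugation by graded algebra automorphisms preserves the defining identities of a (graded) twisting map, because $\a,\b$ and their inverses are algebra homomorphisms and therefore intertwine multiplication, $\a\mu_A=\mu_A(\a\tsr\a)$ and $\b\mu_B=\mu_B(\b\tsr\b)$, and fix the units, $\a(1_A)=1_A$, $\b(1_B)=1_B$. Since $\a,\b$ are graded, these relations hold on each homogeneous component, which is exactly what allows the degree-$n$ statements to be proved by the same manipulation as the global one. The single rewriting that drives everything is $(\a\tsr\b)\t=\t'(\b\tsr\a)$, obtained by inserting $(\b^{-1}\tsr\a^{-1})(\b\tsr\a)=1_{B\tsr A}$.

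First I would verify that $\t'$ satisfies the two identities of Remark \ref{AltTwistingDef}. Writing $\t'=(\a\tsr\b)\t(\b^{-1}\tsr\a^{-1})$ and using the homomorphism property of $\a^{-1}$ in the form $(\b^{-1}\tsr\a^{-1})(1_B\tsr\mu_A)=(1_B\tsr\mu_A)(\b^{-1}\tsr\a^{-1}\tsr\a^{-1})$, one replaces $\t(1_B\tsr\mu_A)$ by $(\mu_A\tsr 1_B)(1_A\tsr\t)(\t\tsr 1_A)$ and reassembles the right-hand side of the first identity for $\t'$ using $(\a\tsr\b)\t=\t'(\b\tsr\a)$; the second identity is symmetric. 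For the unital conditions, $\t'(1_B\tsr a)=(\a\tsr\b)\t(1_B\tsr\a^{-1}a)=(\a\tsr\b)(\a^{-1}a\tsr 1_B)=a\tsr 1_B$, and likewise on the other side. This proves the global claim in the last paragraph that $\t'$ is a graded twisting map. The identical bookkeeping applied to the degree-$j$ identities of Remark \ref{AltTwistingDef}, now with $t_{\le j}$ in the nested slots (which $\a,\b$ respect since they are graded), proves that $t'$ is graded twisting to degree $n$, giving the first assertion.

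Given the first assertion, the extension-and-uniqueness statement is formal. Conjugation $t\mapsto(\a\tsr\b)\,t\,(\b^{-1}\tsr\a^{-1})|_{(B\tsr A)_{\le n+1}}$ and its inverse (conjugation by $\a^{-1},\b^{-1}$) are mutually inverse bijections between $\k$-linear extensions of $t$ to $(B\tsr A)_{\le n+1}$ and $\k$-linear extensions of $t'$. Because $\a,\b$ are graded, this bijection sends extensions restricting to $t$ in degrees $\le n$ to extensions restricting to $t'$ in degrees $\le n$, and by the first assertion applied in degree $n+1$ it sends those that are graded twisting to degree $n+1$ to those that are. A set in bijection with a singleton is a singleton, so a unique such extension of $t$ exists if and only if a unique such extension of $t'$ exists.

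Finally, for the isomorphism of twisted tensor products I would exhibit $\gg=\a\tsr\b$ as a graded algebra map $A\tsr_{\t}B\to A\tsr_{\t'}B$. Using that the multiplication of $A\tsr_{\t}B$ is $(\mu_A\tsr\mu_B)(1_A\tsr\t\tsr 1_B)$ together with $(\a\tsr\b)\t=\t'(\b\tsr\a)$, a direct check gives $\gg\,\mu_{A\tsr_{\t}B}=\mu_{A\tsr_{\t'}B}\,(\gg\tsr\gg)$, so $\gg$ is an algebra homomorphism; it is bijective and graded since $\a,\b$ are. As $\gg\, i_A(a)=\a(a)\tsr 1_B=i_A'\a(a)$ and $\gg\, i_B(b)=1_A\tsr\b(b)=i_B'\b(b)$, the triple $(\a,\b,\gg)$ realizes the required isomorphism. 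The only delicate point throughout is the tensor-slot and degree bookkeeping, in particular checking that the nested occurrences of $t_{\le n}$ in the degree-$n$ twisting identity transform correctly under conjugation; this is precisely where the graded hypothesis on $\a,\b$ is used, and no genuine obstruction arises.
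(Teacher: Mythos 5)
Your proof is correct: the paper leaves this proposition to the reader ("the easy proof is left to the reader"), and your conjugation argument — using that $\a,\b$ are graded algebra isomorphisms to intertwine the unital and twisting identities via $(\a\tsr\b)\t=\t'(\b\tsr\a)$, transporting extensions bijectively, and exhibiting $\gamma=\a\tsr\b$ with the triple $(\a,\b,\gamma)$ as the isomorphism of twisted tensor products — is exactly the intended verification. All the degree and tensor-slot bookkeeping you flag does go through, so nothing further is needed.
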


The relationship between twisting maps and twisted products was established for $\k$-algebras in \cite{Cap}. Here we add graded and truncated algebra versions of the theorem as well. 

\begin{prop}
\label{multiplications}
Let $A$ and $B$ be (graded) algebras. Let $\t: B \tsr A \to A \tsr B$ be a (graded) $\k$-linear map. Define a map $\m_{\t}: A \tsr B \tsr A \tsr B \to A \tsr B$ by $\mu_{\t}=(\mu_A\tsr \mu_B)(1_A\tsr \t\tsr 1_B)$. 

\begin{enumerate}

\item The map $\t:B\tsr A\to A\tsr B$ is a (graded) algebra twisting map if and only if $\mu_{\t}$ defines an associative multiplication giving $A\tsr B$ the structure of a (graded) algebra.

\item Assume that $A$ and $B$ are graded and $\t:B\tsr A\to A\tsr B$ is graded. Then $\t$ is twisting to degree $n$ if and only if $\mu_{\t}$ induces an associative multiplication giving $(A\tsr B)/(A\tsr B)_{>n}$ the structure of a graded algebra.
\end{enumerate}
\end{prop}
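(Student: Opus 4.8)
The plan is to treat part (1) as a graded refinement of the Cap--Schichl--Van\v{z}ura theorem \cite{Cap} and to deduce part (2) by running the same argument in each fixed total degree. Throughout I use the Sweedler-type shorthand $\t(b\tsr a)=a_{\t}\tsr b_{\t}$, so that the candidate multiplication reads $(a\tsr b)\cdot(a'\tsr b')=aa'_{\t}\tsr b_{\t}b'$. The key preliminary observation is the \emph{bridge identity} $(1_A\tsr b)\cdot(a\tsr 1_B)=\t(b\tsr a)$, obtained by substituting into this formula; it is what converts statements about the associativity of $\mu_{\t}$ into the twisting axioms.

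For part (1), I would first dispose of the unit. Computing $(1_A\tsr 1_B)\cdot(a\tsr b)$ and $(a\tsr b)\cdot(1_A\tsr 1_B)$ shows that $1_A\tsr 1_B$ is a two-sided identity for $\mu_{\t}$ precisely when the unital twisting conditions $\t(1_B\tsr a)=a\tsr 1_B$ and $\t(b\tsr 1_A)=1_A\tsr b$ hold. It then remains to match associativity of $\mu_{\t}$ with the multiplicativity axiom for $\t$. By Remark \ref{AltTwistingDef} the latter is equivalent to the two hexagon identities $\t(1_B\tsr\mu_A)=(\mu_A\tsr 1_B)(1_A\tsr\t)(\t\tsr 1_A)$ and $\t(\mu_B\tsr 1_A)=(1_A\tsr\mu_B)(\t\tsr 1_B)(1_B\tsr\t)$, which is what I would target. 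Evaluating associativity on the triple $(1_A\tsr b,\,a\tsr 1_B,\,a'\tsr 1_B)$ and simplifying each bracketing with the bridge identity produces exactly the first hexagon identity, while evaluating it on $(1_A\tsr b,\,1_A\tsr b',\,a\tsr 1_B)$ produces the second; this gives the implication associativity $\Rightarrow$ twisting. For the converse I would expand both triple products $\bigl((a\tsr b)\cdot(a'\tsr b')\bigr)\cdot(a''\tsr b'')$ and $(a\tsr b)\cdot\bigl((a'\tsr b')\cdot(a''\tsr b'')\bigr)$ in Sweedler notation and reconcile them using the two hexagon identities together with the associativity of $\mu_A$ and $\mu_B$. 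In the graded case one checks in addition that $\mu_{\t}$ is homogeneous of degree $0$ for the K\"{u}nneth grading, which is immediate from the gradedness of $\t$, $\mu_A$ and $\mu_B$.

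For part (2) I would exploit that the underlying graded vector space of $(A\tsr B)/(A\tsr B)_{>n}$ is $(A\tsr B)_{\le n}$, and that the product of two homogeneous elements of total degree $\le n$ is computed by applying $\mu_{\t}$ and then projecting away degrees $>n$. Because $\mu_A$, $\mu_B$ and $\t$ are graded, this product in total degree $j\le n$ involves only the components $\t_{\le j}$; hence it is determined by a map that is graded twisting to degree $n$. Associativity of the truncated multiplication is then a family of identities, one in each total degree $j\le n$, and each is precisely the degree-$j$ component of the computation carried out in part (1). By the graded form of Remark \ref{AltTwistingDef}, recorded there for all $j\le n$, the relevant degree-$j$ hexagon identities are exactly the assertion that $t$ is graded twisting to degree $n$. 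Repeating the two evaluations above, now read off in each degree $j\le n$, therefore yields the equivalence: $\mu_{\t}$ induces an associative multiplication on the truncation if and only if $t$ is graded twisting to degree $n$.

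The main obstacle is the forward direction of the associativity computation in part (1): verifying that the two hexagon identities force $\mu_{\t}$ to be associative on arbitrary elements. This is the only genuinely multi-step diagram chase, and the bookkeeping is delicate, since the two hexagon relations must be applied in the correct order and interleaved with the associativity of $\mu_A$ and $\mu_B$. Everything else reduces to the bridge identity and to tracking the K\"{u}nneth grading; for part (2) the latter amounts only to confirming that every intermediate term appearing in a degree-$j$ identity involves components of $\t$ of degree at most $j\le n$, so that no information beyond ``twisting to degree $n$'' is ever required.
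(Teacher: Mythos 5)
Your proposal is correct and takes essentially the same route as the paper: the paper's proof of (1) simply cites Proposition 2.3 of \cite{Cap} (with ``appropriate changes'' for the graded case), and for (2) it identifies $(A\tsr B)/(A\tsr B)_{>n}$ with $(A\tsr B)_{\le n}$, notes the induced multiplication is $(\m_A\tsr\m_B)(1_A\tsr\t_{\le n}\tsr 1_B)$, and again defers to the argument of \cite{Cap} run degree by degree. Your unit check, bridge identity $(1_A\tsr b)\cdot(a\tsr 1_B)=\t(b\tsr a)$, and the evaluation of associativity on the two special triples to recover the hexagon identities of Remark \ref{AltTwistingDef} are precisely the content of that cited argument, so you have written out in full the details the paper delegates to the reference.
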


\begin{proof}
Statement (1) in the not necessarily graded case is Proposition 2.3 of \cite{Cap}; for the graded case of (1), one follows the proof of Proposition 2.3 of \cite{Cap} and makes the appropriate changes. 

For (2) we set $C = A \tsr B/(A \tsr B)_{>n}$.  The map $\m_{\t}$ is graded, so, in particular, $$\m_{\t}((A \tsr B)_{>n} \tsr (A \tsr B)_{>n}) \subseteq (A \tsr B)_{>n},$$ hence we get a canonically defined graded $\k$-linear map $\overline{\m_{\t}}: C \tsr C \to C$. We identify $C$ with $(A \tsr B)_{\leq n}$ in the obvious way. Then note that under this identification $\overline{\m_{\t}}_{>n} = 0$ and $$\overline{\m_{\t}}_{\leq n} = (\m_A \tsr \m_B)(1_A \tsr \t_{\leq n} \tsr 1_B).$$ Now, to prove that $\overline{\m_{\t}}$ is associative if and only if $\t$ is twisting to degree $n$, one simply follows the proof of Proposition 2.3 in \cite{Cap}.
\end{proof}

We denote the algebra determined by $\t$ in Proposition \ref{multiplications} (1) by $A\tsr_{\t} B$. We call $A\tsr_{\t} B$ the \emph{external twisted tensor product} of $A$ and $B$.

A given pair of $\k$-algebras may have non-isomorphic external twisted products, and a given $\k$-algebra may be expressed as an internal twisted product in more than one way.

%

\begin{prop}
Let $(C,i_A,i_B)$ be a (graded) twisted tensor product of (graded) $\k$-algebras $A$ and $B$. Then there is a unique (graded) twisting map $\t$ such that $C$ is isomorphic to $A\tsr_{\t} B$ as a (graded) twisted tensor product.
\end{prop}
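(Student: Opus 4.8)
The plan is to exhibit the twisting map $\t$ explicitly from the multiplication on $C$ and then verify uniqueness. Given the internal twisted tensor product $(C,i_A,i_B)$, let $\phi: A\tsr B\to C$ be the $\k$-linear isomorphism $a\tsr b\mapsto i_A(a)i_B(b)$. The key observation is that the product $i_B(b)i_A(a)$ lies in $C$, so I can pull it back through $\phi^{-1}$ to land in $A\tsr B$. I therefore define
$$\t = \phi^{-1}\circ \mu_C\circ(i_B\tsr i_A): B\tsr A\longrightarrow A\tsr B,$$
where $\mu_C$ is the multiplication on $C$. In the graded case, since $i_A,i_B$ are graded algebra maps and $\phi$ is a graded isomorphism (the K\"unneth grading on $A\tsr B$ is exactly matched by $\phi$ to the grading on $C$), the map $\t$ is automatically graded.

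Next I would verify that $\t$ is a twisting map, i.e.\ that it satisfies the unital twisting conditions and the hexagon identity. The unital conditions $\t(1_B\tsr a)=a\tsr 1_B$ and $\t(b\tsr 1_A)=1_A\tsr b$ follow immediately because $i_A,i_B$ are unital and $i_A(1_A)=i_B(1_B)=1_C$, so $\phi^{-1}(i_B(b)i_A(1_A))=\phi^{-1}(i_B(b))=1_A\tsr b$ and similarly on the other side. For the main twisting identity, I would invoke Proposition \ref{multiplications}(1): it suffices to check that $\mu_\t=(\mu_A\tsr\mu_B)(1_A\tsr\t\tsr 1_B)$ is an associative product on $A\tsr B$. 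But under the isomorphism $\phi$, I claim $\mu_\t$ is transported to $\mu_C$, i.e.\ $\phi\circ\mu_\t=\mu_C\circ(\phi\tsr\phi)$. This is a direct computation: $\phi(\mu_\t((a\tsr b)\tsr(a'\tsr b')))$ unwinds, using the definition of $\t$ and the fact that $i_A,i_B$ are multiplicative, to $i_A(a)i_B(b)i_A(a')i_B(b')=\mu_C(\phi(a\tsr b),\phi(a'\tsr b'))$. Since $\mu_C$ is associative and $\phi$ is an isomorphism, $\mu_\t$ is associative, so $\t$ is a twisting map, and moreover $\phi$ becomes an isomorphism $A\tsr_\t B\to C$ of twisted tensor products (it intertwines $i_A$ with the canonical inclusion $a\mapsto a\tsr 1_B$ and $i_B$ with $b\mapsto 1_A\tsr b$).

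For uniqueness, suppose $\t'$ is another (graded) twisting map with $A\tsr_{\t'}B\cong C$ as twisted tensor products via some $\gamma$. The isomorphism condition forces $\gamma$ to respect the inclusions, so $\gamma(a\tsr 1_B)=i_A(a)$ and $\gamma(1_A\tsr b)=i_B(b)$; combined with multiplicativity of $\gamma$ this pins down $\gamma=\phi$ on all of $A\tsr B$. Then $\mu_{\t'}$ must equal $\phi^{-1}\mu_C(\phi\tsr\phi)=\mu_\t$, and since the twisting map is recovered from its product via $\t'=(\e_A\tsr 1_B\tsr 1_A\tsr\e_B$-type evaluation$)$---concretely, $\t'(b\tsr a)=\mu_{\t'}((1_A\tsr b)\tsr(a\tsr 1_B))$---we conclude $\t'=\t$.

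I expect the only mildly delicate point to be the bookkeeping in the identity $\phi\circ\mu_\t=\mu_C\circ(\phi\tsr\phi)$, where one must carefully insert the definition of $\t$ and use associativity of $\mu_C$ to regroup $i_A(a)i_B(b)i_A(a')i_B(b')$; everything else is formal. In the graded or truncated setting no new difficulty arises, since one simply restricts to homogeneous components and appeals to Proposition \ref{multiplications}(2) in place of (1). Thus the main obstacle is purely the verification of the product-transport identity, which is a routine (if notation-heavy) diagram chase rather than a conceptual hurdle.
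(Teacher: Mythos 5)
Your proof is correct, and it is essentially the argument the paper outsources: the paper's entire proof of this proposition is a citation of Proposition 2.7 of \cite{Cap}, and your construction $\t=\phi^{-1}\circ\mu_C\circ(i_B\tsr i_A)$, the transport identity $\phi\circ\mu_\t=\mu_C\circ(\phi\tsr\phi)$, and the appeal to Proposition \ref{multiplications}(1) to convert associativity of $\mu_\t$ into the twisting condition constitute exactly the standard proof of that cited result, with the graded case following because $i_A$, $i_B$, and hence $\phi$ and $\t$, are graded.

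One caveat on your uniqueness step. You assert that ``the isomorphism condition forces $\gamma$ to respect the inclusions,'' i.e.\ $\gamma(a\tsr 1_B)=i_A(a)$ and $\gamma(1_A\tsr b)=i_B(b)$. That is automatic under the notion of isomorphism used in \cite{Cap}, where the isomorphism must intertwine the structure maps on the nose. But this paper defines isomorphism of twisted tensor products more loosely: it only requires graded algebra homomorphisms $\alpha:A\to A$, $\beta:B\to B$ with $\gamma i_A=i_A'\alpha$ and $\gamma i_B=i_B'\beta$. Under that weaker definition your argument only gives $\gamma(a\tsr 1_B)=i_A(\alpha(a))$, and in fact literal uniqueness fails: by the paper's own Proposition \ref{isomorphism type of ttp}, the conjugate twisting map $(\alpha\tsr\beta)\t(\beta^{-1}\tsr\alpha^{-1})$ yields a twisted tensor product isomorphic (in the paper's sense) to $A\tsr_\t B$, so $\t$ is determined only up to such conjugation. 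Since the proposition is quoted from \cite{Cap}, the strict notion is clearly the intended one, and your proof is a complete and correct proof of the intended statement; just be aware that the sentence quoted above is where you silently adopt the strict convention ($\alpha=1_A$, $\beta=1_B$) rather than the paper's stated definition.
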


\begin{proof}
The ungraded version of this statement is Proposition 2.7 in \cite{Cap}. The proof in the graded case is entirely analogous.
\end{proof}

If $A$ and $B$ are graded, a graded twisting map $\t$ satisfies
$$\t(B_+\tsr A_+)\subseteq (A_+\tsr B_0) \oplus (A_+\tsr B_+) \oplus (A_0\tsr B_+).$$
 We will see in Section \ref{Koszul twisted tensor products} that a few special cases are important to distinguish. If
$$\t(B_+\tsr A_+)\subseteq (A_+\tsr B_0) \oplus (A_+\tsr B_+)$$ or $$\t(B_+\tsr A_+)\subseteq  (A_+\tsr B_+) \oplus (A_0\tsr B_+)$$
we call the graded twisting map $\t$ \emph{one-sided}; if $\t(B_+\tsr A_+)\subseteq (A_+\tsr B_+)$ we call $\t$ \emph{pure}; if $\t(B_i \tsr A_j) \subseteq A_j \tsr B_i$ for all $i, j \geq 0$, then we call $\t$ \emph{strongly graded}. In our experience, graded twisting maps can behave rather badly in general. For example, from a homological point of view, in general, one cannot naturally form a projective resolution of $A \tsr_{\t} B$ from projective resolutions of $A$ and $B$, unless one uses bar complexes, see \cite{GucGuc}. The class of one-sided twisting maps is much nicer; and pure and strongly graded twisting maps are nicer still.  It is the last case which is most often encountered in the literature; \cite{JPS}, \cite{ShepWi}, \cite{WaWi}.
%
%
We implore the reader to bear in mind: 
\bigskip

\noindent \emph{Many references to graded twisting maps in the literature in fact refer to the more restrictive cases of one-sided, pure, or strongly graded twisting maps.} 
\bigskip

In light of this potential for confusion, when we wish to emphasize that a statement about graded twisting maps applies in full generality, we refer to the twisting map as \emph{two-sided}.

In \cite{BorowiecMarcinek}, the authors identify the twisted tensor product $A\tsr_{\t} B$ with a certain quotient of the free product algebra, $A\ast B$.  As a vector space $$A \ast B = \bigoplus_{i \geq 0; \e_1, \e_2 \in \{0 ,1\}} A_+^{\e_1} \tsr (B_+ \tsr A_+)^{\tsr i} \tsr B_+^{\e_2}.$$ 
We note that $A \ast B$ is $\N$-graded with the usual K\"unneth grading. 

We will need a slight generalization of the graded version of their result. We refer to an element $b\tsr a-\t(b\tsr a)\in A\ast B$ as a \emph{$\t$-relation}.

For a graded $\k$-linear map $\t:B\tsr A\to A\tsr B$ and $n\in\N$, we let $I_{\t}^n$ be the ideal of $A\ast B$ generated by $\t$-relations of degree at most $n$. That is,
$$I_{\t}^n = \la b\tsr a - \t(a \tsr b) \ |\ a\in A_i, b\in B_j, i+j\le n\ra.$$
The ideal generated by all $\t$-relations is
$$I_{\t} = \la b\tsr a - \t(a \tsr b)\ |\ a\in A, b\in B\ra.$$

\begin{prop}
\label{presentation}
Let $A$ and $B$ be graded, unital $\k$-algebras. If the $\k$-linear map $\t:B\tsr A\to A\tsr B$ is graded twisting to degree $n$, then there is a $\k$-algebra homomorphism $\varphi:(A\ast B)/I_{\t}^n\to (A\tsr B)/(A\tsr B)_{>n}$, where the algebra structure on $(A\tsr B)/(A\tsr B)_{>n}$ is given by Proposition \ref{multiplications}(2).  The map $\varphi$ is an isomorphism in degrees $\le n$.   

If $\t$ is a graded twisting map, then $A\tsr_{\t} B\cong (A\ast B)/I_{\t}$.
\end{prop}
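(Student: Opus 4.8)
The plan is to obtain $\varphi$ from the universal property of the free product $A\ast B$ and then analyze it degree by degree. Write $C=(A\tsr B)/(A\tsr B)_{>n}$, equipped with the associative multiplication $\overline{\m_\t}$ furnished by Proposition \ref{multiplications}(2), and identify $C$ with $(A\tsr B)_{\le n}$ as a graded vector space. First I would check that $a\mapsto \overline{a\tsr 1_B}$ and $b\mapsto \overline{1_A\tsr b}$ define graded algebra homomorphisms $A\to C$ and $B\to C$: using $\m_\t=(\mu_A\tsr\mu_B)(1_A\tsr\t\tsr 1_B)$ and the unital twisting conditions one computes $\overline{\m_\t}(\overline{a\tsr 1_B}\tsr\overline{a'\tsr 1_B})=\overline{aa'\tsr 1_B}$ (and dually for $B$), products landing in degree $>n$ vanishing on both sides. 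The universal property then yields a graded algebra homomorphism $\psi\colon A\ast B\to C$. A direct computation gives $\psi(b\tsr a)=\overline{\t(b\tsr a)}$ for $a\in A_i,\ b\in B_j$ with $i+j\le n$, whence each generating $\t$-relation of $I_\t^n$ lies in $\ker\psi$; thus $\psi$ descends to $\varphi\colon R\to C$, where $R=(A\ast B)/I_\t^n$.

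To control $\varphi$ I would introduce the \emph{straightening map} $\theta\colon A\tsr B\to R$ sending $a\tsr b$ to the class of the product $ab\in A\ast B$. Both maps are graded, and for every $d\le n$ the composite $\varphi_d\circ\theta_d$ is the identity on $(A\tsr B)_d$: indeed $\varphi(\theta(a\tsr b))=\varphi(\bar a)\varphi(\bar b)=\overline{\m_\t(a\tsr 1_B\tsr 1_A\tsr b)}=\overline{a\tsr b}$, which is $a\tsr b$ under the identification of $C$ in degrees $\le n$. Hence $\theta_d$ is injective and $\varphi_d$ is surjective for $d\le n$, and it remains only to show $\theta_d$ is \emph{surjective} in those degrees; this forces $\varphi_d$ and $\theta_d$ to be mutually inverse, proving $\varphi$ is an isomorphism in degrees $\le n$.

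The surjectivity of $\theta$ in degrees $\le n$ is the crux, and I expect the termination of the reduction to be the main obstacle. Using the decomposition $A\ast B=\bigoplus A_+^{\e_1}\tsr(B_+\tsr A_+)^{\tsr i}\tsr B_+^{\e_2}$, I would induct on the integer $i$. A homogeneous word with $i=0$ already has the form $ab$ and so lies in $\im\theta$. If $i\ge 1$, the word contains an adjacent pair $b\tsr a$ with $b\in B_+$, $a\in A_+$, necessarily of degree $\le n$, so the $\t$-relation $b\tsr a\equiv\t(b\tsr a)\pmod{I_\t^n}$ rewrites it modulo $I_\t^n$ as a sum of terms $a_\t\tsr b_\t$ in $(A_+\tsr B_0)\oplus(A_+\tsr B_+)\oplus(A_0\tsr B_+)$; substituting these back and collapsing the newly adjacent $A$- or $B$-letters via $\mu_A,\mu_B$ produces a $\k$-linear combination of words with strictly smaller $i$. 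The degree is preserved throughout and only relations from $I_\t^n$ are used, so by induction every class in $R_d$ ($d\le n$) is represented by an element of the form $\sum_k a_k\tsr b_k$, i.e. lies in $\im\theta_d$. This completes the proof of the first assertion.

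For the final statement I would take $\t$ to be a genuine graded twisting map, so that $\t$ is twisting to degree $n$ for all $n$ and the constructions above apply without truncation: the inclusions $a\mapsto a\tsr 1_B$, $b\mapsto 1_A\tsr b$ into $A\tsr_\t B$ give an algebra homomorphism $A\ast B\to A\tsr_\t B$ annihilating every $\t$-relation, hence a graded algebra homomorphism $\bar\varphi\colon (A\ast B)/I_\t\to A\tsr_\t B$. The straightening argument of the previous paragraph now runs in every degree, since all $\t$-relations are available, so the analogue of $\theta$ is surjective; together with $\bar\varphi\circ\theta=\mathrm{id}$ this makes $\bar\varphi$ an isomorphism with inverse $\theta$. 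Therefore $A\tsr_\t B\cong(A\ast B)/I_\t$, recovering the graded form of the result of \cite{BorowiecMarcinek}.
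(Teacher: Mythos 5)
Your proof is correct, and its skeleton matches the paper's: both obtain the homomorphism from the universal property of $A\ast B$, check that the generating $\t$-relations lie in the kernel, and then exploit the fact that those relations let every word in the free product be straightened into a sum of terms $a_k\tsr b_k$. The difference is in how the isomorphism is concluded. The paper keeps the straightening implicit---it asserts that ``it is evident from the defining generators of $I_{\t}$'' that $\dim((A\ast B)/I_{\t})_i\le\dim(A\tsr_{\t}B)_i$ in each degree---and then finishes by a dimension count against the surjection $A\ast B\to A\tsr_{\t}B$ (and likewise in the truncated case). You instead package the straightening as an explicit map $\theta$, verify $\varphi\theta=\mathrm{id}$ in degrees $\le n$, and prove surjectivity of $\theta$ by induction on the number of $B_+\tsr A_+$ blocks, so that $\varphi$ and $\theta$ become mutually inverse. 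Your route buys two things: it supplies the rewriting induction that the paper dismisses as evident, and it avoids the dimension count entirely, hence does not use local finite-dimensionality of $A$ and $B$ (a standing convention in the paper, though not stated in the proposition's hypotheses; without it, a graded surjection plus a dimension inequality would not force an isomorphism). The paper's argument, in turn, is shorter once local finiteness is granted. One small point worth keeping from your write-up is the observation that any adjacent pair $b\tsr a$ inside a homogeneous word of degree $d\le n$ itself has degree $\le n$, which is exactly what makes the needed relation available in $I_{\t}^n$ in the truncated setting.
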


\begin{proof}
First assume that $\t$ is a graded twisting map. Using the universal property of the free product there exists a surjective graded algebra homomorphism $$\pi: A \ast B \to A \tsr_{\t} B.$$ It is clear that the ideal $I_{\t}$ is contained in $\ker \pi$. Moreover, it is evident from the defining generators of $I_{\t}$ that in each graded component $$\dim((A \ast B)/I_{\t})_i \leq \dim(A \tsr_{\t} B)_i.$$ Therefore we have $\ker \pi = I_{\t}$ and $A\tsr_{\t} B\cong (A\ast B)/I_{\t}$.

Now suppose that $\t:B\tsr A\to A\tsr B$ is graded twisting to degree $n$. From Proposition \ref{multiplications} (2), $(A \tsr B)/(A \tsr B)_{>n}$ is canonically an associative algebra. There are obvious algebra maps $A \to (A \tsr B)/(A \tsr B)_{>n}$ and $B \to (A \tsr B)/(A \tsr B)_{>n}$ so the universal property of $A \ast B$ affords a surjective algebra homomorphism $A \ast B \to (A \tsr B)/(A \tsr B)_{>n}$ whose kernel contains $I_{\t}^n$. Comparing dimensions of graded components in degrees $\leq n$ we see there is an induced map $$\varphi:(A\ast B)/I_{\t}^n\to (A\tsr B)/(A\tsr B)_{>n}$$ which is an isomorphism in degrees $\leq n$.
\end{proof}

\subsection{Quadratic and Koszul algebras} 
\label{KoszulPrelims}

Recall that we assume $A$ is an $\N$-graded $\k$-algebra that is connected and locally finite dimensional. As mentioned above, the graded algebra $A$ is called \emph{Koszul} if the trivial module $\k=A_0=A/A_+$  admits a resolution 
$$\cdots\to P_3\to P_2\to P_1\to P_0 \to \k\to 0$$
such that each $P_i$ is a graded free left $A$-module generated in degree $i$.

There are several formulations of the notion of Koszul algebra that are equivalent to the definition above, and these lead to a multitude of techniques for proving an algebra is Koszul. One very useful approach makes use of filtrations.

Let $\G$ be a graded ordered semigroup. By definition, $\G$ is a semigroup with unit $e$ endowed with a semigroup map $g: \G \to \N$ such that the fibers $\G_n = g^{-1}(n)$ are totally ordered with their orders satisfying:

$$\text{ for all } \a, \b \in \G_k \text{ and } \gamma \in \G_l, \ \a < \b \implies \a \gamma < \b \gamma \text{ and } \gamma \a < \gamma \b.$$ We also assume $g^{-1}(0) = \{e\}$.

 A $\G$-valued filtration on a graded algebra $A$ is a collection of subspaces $F_{\a}A_n\subset A_n$ for all $n\ge 0$ and $\a\in \G_n$ such that 
\begin{itemize}
\item $F_{\a}A_n\subseteq F_{\b} A_m$ whenever $\a\le \b$,
\item if $\gamma \in \G_n$ is maximal, then $F_{\gamma}A_n=A_n$,
\item $F_{\a}A_n \cdot F_{\b} A_m \subseteq F_{\a\b} A_{n+m}$.
\end{itemize}
The associated $\G$-graded algebra is ${\rm gr}^F A =\bigoplus_{\a\in\G} (F_{\a} A_n/(\sum_{\a' < \a} F_{\a'}A_n))$. The filtration $F$ is called \emph{one-generated} if 
$$F_{\a}A_n = \sum_{i_1i_2\cdots i_k\le\a} F_{i_1}A_1\cdot F_{i_2}A_1\cdots F_{i_k}A_1$$
Observe that ${\rm gr}^F A$ is one-generated if and only if $F$ is. The next theorem illustrates the usefulness of filtrations for studying Koszul algebras.

\begin{thm}[\cite{PP}, Theorem 7.1, p.\ 89] 
\label{filtrationTrick}
Let $A$ be a quadratic algebra equipped with a one-generated filtration $F$ with values in a graded ordered semigroup $\G$. Assume the associated $\G$-graded algebra ${\rm gr}^F A$ satisfies the following conditions
\begin{enumerate}
\item ${\rm gr}^F A$ has no defining relations of degree 3;
\item $q({\rm gr}^F A)$ is Koszul
\end{enumerate}
Then ${\rm gr}^F A$ is quadratic (hence Koszul) and $A$ is Koszul.
\end{thm}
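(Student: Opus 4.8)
The plan is to decouple the statement into two parts: (I) the $\G$-graded algebra $G:={\rm gr}^F A$ is itself quadratic and Koszul, and (II) Koszulity of $G$ implies Koszulity of $A$. Part (II) is the standard ``associated-graded transfer'' and is where the graded ordered semigroup $\G$ does its work; part (I) is where hypotheses (1) and (2) enter, and is the technical heart. Note that once the quadraticity in (I) is known, the parenthetical ``hence Koszul'' is immediate: $G=q(G)$, which is Koszul by hypothesis (2).

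For (II) I would argue homologically, using that a connected graded algebra $R$ is Koszul exactly when $\Tor^R_{i,j}(\k,\k)=0$ for $i\neq j$. Since $F$ is multiplicative (the third filtration axiom) and compatible with the internal $\N$-grading through the semigroup map $g:\G\to\N$, it induces a filtration on the bar resolution of $\k$ over $A$ whose associated graded complex is the bar complex of $G$ and so computes $\Tor^G(\k,\k)$. The resulting spectral sequence of the filtered complex converges to $\Tor^A(\k,\k)$ and preserves the internal degree. As its first page $\Tor^G(\k,\k)$ is concentrated on the diagonal (because $G$ is Koszul by (I)), no off-diagonal class can survive to the abutment, forcing $\Tor^A_{i,j}(\k,\k)=0$ for $i\neq j$; thus $A$ is Koszul.

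For (I), the observation in the excerpt that ${\rm gr}^F A$ is one-generated if and only if $F$ is shows $G$ is generated in degree $1$, with $G_1=A_1=:V$. Write $Q:=q(G)=T(V)/(R)$, $R\subseteq V\tsr V$, for its quadratic part, and let $\varphi:Q\twoheadrightarrow G$ be the canonical surjection. By construction $\varphi$ is an isomorphism in degrees $\le 2$, and hypothesis (1) promotes this to degree $3$, since ``$G$ has no defining relations in degree $3$'' means exactly $\dim G_3=\dim Q_3$. I would then leverage hypothesis (2), that $Q$ is Koszul, to show $\varphi$ is an isomorphism in all degrees, whence $G=Q$ is quadratic and Koszul. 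The cleanest tool is the distributive-lattice criterion for Koszulity: $Q$ is Koszul if and only if for every $n$ the subspaces $V^{\tsr(i-1)}\tsr R\tsr V^{\tsr(n-i-1)}$, $1\le i\le n-1$, generate a distributive lattice in $V^{\tsr n}$. Distributivity controls the spaces $\Tor^G_{2,n}(\k,\k)$ of degree-$n$ defining relations of $G$ and, via an induction on $n$, should propagate the degree-$3$ vanishing upward, yielding no defining relations of $G$ in any degree $\ge 4$. Equivalently one can compare Hilbert series: surjectivity of $\varphi$ gives $h_Q\ge h_G$ coefficientwise, while passing to the associated graded preserves Hilbert series, so $h_G=h_A$; matching these in all degrees then forces $\varphi$ to be an isomorphism.

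The main obstacle is precisely this last step: ruling out defining relations of $G$ in degrees $\ge 4$. It cannot follow from (1) and (2) in isolation, as $\k[x]/(x^4)$ shows --- its quadratic part $\k[x]$ is Koszul and it has no cubic relations, yet it is not quadratic. What rescues the argument is the hypothesis that $A$ is quadratic, which pins $h_G=h_A$ to be the Hilbert series of a quadratic algebra; the real content is that, under this constraint, the degree-$3$ distributivity coming from Koszulity of $q(G)$ forces distributivity, and hence the absence of defining relations, in all higher degrees. Executing this inductive distributivity step (equivalently, the matching Hilbert-series estimate) is where the effort of the proof resides.
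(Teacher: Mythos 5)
The paper itself offers no proof of Theorem \ref{filtrationTrick}: it is quoted from \cite{PP} (Theorem 7.1 of the chapter on PBW bases and filtrations), so your attempt must be measured against the proof given there. Your high-level architecture is reasonable and partially matches it: you correctly reduce the parenthetical ``hence Koszul'' to quadraticity of $G={\rm gr}^F A$ plus hypothesis (2); your transfer step (II) via the spectral sequence of the filtered bar complex is a correct standard argument (convergence holds because $A$ is locally finite and each fiber $\G_n$ is totally ordered, so the induced filtration is finite in each internal degree); and your example $\k[x]/\la x^4\ra$ correctly shows that hypotheses (1) and (2) alone cannot yield quadraticity of $G$, so the quadraticity of $A$ must enter somewhere.

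The genuine gap is that the step you yourself label as ``where the effort of the proof resides'' \emph{is} the theorem, and you never actually bring the quadraticity of $A$ to bear. Concretely, write $A=T(V)/\la I_2\ra$ and $R={\rm gr}(I_2)\subseteq V\tsr V$, so $q(G)=T(V)/\la R\ra$; then ``$G$ is quadratic'' is the assertion that for all $n$,
$${\rm gr}\Big(\sum_i V^{\tsr i}\tsr I_2\tsr V^{\tsr (n-2-i)}\Big)=\sum_i V^{\tsr i}\tsr R\tsr V^{\tsr (n-2-i)},$$
where only the inclusion $\supseteq$ is automatic. The proof in \cite{PP} establishes equality by an induction on $n$ phrased through Backelin's lattice criterion: Koszulity of $q(G)$ says the subspaces $W_i=V^{\tsr(i-1)}\tsr R\tsr V^{\tsr(n-i-1)}$ generate a distributive lattice in $V^{\tsr n}$; hypothesis (1) is exactly the base case ${\rm gr}(I_2\tsr V+V\tsr I_2)=R\tsr V+V\tsr R$; and the induction shows that ${\rm gr}$ commutes with all lattice operations on the subspaces $U_i=V^{\tsr(i-1)}\tsr I_2\tsr V^{\tsr(n-i-1)}$. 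This simultaneously gives quadraticity of $G$ \emph{and} distributivity of the lattice generated by the $U_i$, i.e., Koszulity of $A$ directly --- so in the actual proof your part (II) comes for free and no spectral sequence is needed. None of this induction appears in your write-up; you only assert that it should work. Moreover, your proposed ``equivalent'' Hilbert-series route is circular: surjectivity of $\varphi$ gives $h_{q(G)}\geq h_G=h_A$ coefficientwise, and equality in all degrees is \emph{precisely} the statement that $G$ is quadratic; there is no a priori reason why $h_{q(G)}$ should equal $h_A$ (two quadratic algebras with the same numbers of generators and relations generally have different Hilbert series), so ``matching these in all degrees'' presupposes the conclusion. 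As it stands, your proposal is a correct plan whose central argument is missing.
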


The theorem is particularly useful in the case of a normal subalgebra. 

\begin{cor}[\cite{PP}, Example 2, p.\ 90]\label{one-sided Koszul}
Let $f:A\to C$ be a homomorphism of quadratic algebras such that $C_1f(A_1)\subset f(A_1)C_1$. Assume that $A$ and $C/f(A_1)C$ are Koszul algebras and the left action of $A$ on $C$ is free in degrees $\le 3$. Then $C$ is a Koszul algebra.
\end{cor}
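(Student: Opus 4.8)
The plan is to apply the filtration trick of Theorem \ref{filtrationTrick} to $C$ itself. By hypothesis $C$ is quadratic, so it suffices to equip $C$ with a one-generated filtration $F$ valued in a graded ordered semigroup $\G$ such that ${\rm gr}^F C$ has no defining relations in degree $3$ and $q({\rm gr}^F C)$ is Koszul. Write $D=C/f(A_1)C$; the hypothesis $C_1f(A_1)\subseteq f(A_1)C_1$ propagates inductively to $C_nf(A_1)\subseteq f(A_1)C_n$, so $f(A_1)C$ is a two-sided ideal and $D$ is a quadratic quotient algebra of $C$. Since the left action of $A$ on $C$ is free in low degrees, I may identify $A_1\cong f(A_1)$, fix a graded splitting $C_1=f(A_1)\oplus W$ with $W\cong D_1$, and aim to identify ${\rm gr}^F C$ with the twisted tensor product $A\tsr_\tau D$ for the twisting map $\tau:D\tsr A\to A\tsr D$ described below.

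For the filtration I would take $\G$ to be the free monoid on the two-letter ordered alphabet $\{1<2\}$, graded by word length, with each fiber $\G_n$ ordered lexicographically; this is a graded ordered semigroup with $g^{-1}(0)=\{e\}$. Assigning the letter $1$ to factors from $f(A_1)$ and the letter $2$ to factors from $W$, I let $C^\beta\subseteq C_n$ be the span of degree-$n$ products of generators realizing the type word $\beta\in\G_n$, and set $F_\alpha C_n=\sum_{\beta\le\alpha}C^\beta$. This filtration is one-generated by construction. The essential computation is that normality gives $W\cdot f(A_1)\subseteq f(A_1)C_1=C^{11}+C^{12}$, and since $11<12<21$ lexicographically, every ``$2$-before-$1$'' product lies in $F_{<21}$. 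Consequently the classes of $W$ and of $f(A_1)$ satisfy $\overline{w}\cdot\overline{u}=0$ in ${\rm gr}^F C$; that is, ${\rm gr}^F C$ is a quotient of $A\tsr_\tau D$, where $\tau$ is the (one-sided, readily verified) graded twisting map that vanishes on $D_+\tsr A_+$.

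Next I would use freeness in degrees $\le 3$ to pin down ${\rm gr}^F C$ exactly. Passing to the associated graded preserves dimensions in each degree, so $\dim({\rm gr}^F C)_n=\dim C_n=\dim(A\tsr D)_n$ for $n\le 3$. By Proposition \ref{multiplications} the zero-twist product $A\tsr_\tau D$ has underlying space $A\tsr D$, hence the same dimensions; therefore the canonical surjection $q({\rm gr}^F C)=A\tsr_\tau D\twoheadrightarrow {\rm gr}^F C$ is an isomorphism through degree $3$. This forces the degree-$2$ relations of ${\rm gr}^F C$ to be exactly $R_A$ (among $f(A_1)$), all of $W\tsr f(A_1)$, and the relations of $D$ (among $W$), with $f(A_1)\tsr W$ free; it simultaneously shows ${\rm gr}^F C$ has no defining relations in degree $3$ and that $q({\rm gr}^F C)=A\tsr_\tau D$ is the zero-twist product.

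It remains to show $q({\rm gr}^F C)=A\tsr_\tau D$ is Koszul, which I expect to be the main obstacle. The favorable feature is that, the twist being zero, $D$ also embeds as a subalgebra, so the extension is split and the normal-form monomials identify $A\tsr_\tau D$ with $A\tsr D$ as graded vector spaces, of Hilbert series $H_A(t)H_D(t)$. I would prove Koszulity by showing that the Koszul complex of $A\tsr_\tau D$ is the ordered tensor product of the Koszul complexes of $A$ and of $D$: the vanishing $\overline{w}\,\overline{u}=0$ is precisely what decouples the mixed components of the differential, so acyclicity follows from the K\"unneth theorem and the acyclicity of the two factor complexes, which holds as $A$ and $D$ are Koszul. (Alternatively one verifies Backelin's distributivity condition, the $f(A_1)/W$ splitting reducing it to distributivity for $A$ and for $D$ separately.) I emphasize that Theorem \ref{pure graded Koszul} cannot be invoked here, since $\tau$ is in general neither strongly graded nor invertible; it is exactly this degenerate zero-twist case that must be handled directly. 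With $q({\rm gr}^F C)$ Koszul and ${\rm gr}^F C$ free of degree-$3$ relations, Theorem \ref{filtrationTrick} yields that $C$ is Koszul.
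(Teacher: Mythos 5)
The paper itself offers no proof of this corollary --- it is quoted directly from \cite{PP}, Example 2, p.~90 --- and your overall architecture coincides with the argument used there (and with the paper's own later deployment of the same technique in Theorem \ref{separableKoszul}): a one-generated filtration valued in an ordered semigroup of words in two letters, under which normality forces $\overline{w}\,\overline{u}=0$ in ${\rm gr}^F C$, so that ${\rm gr}^F C$ is a quotient of the zero-twist product $Q=A\otimes_{\tau}D$; then freeness in degrees $\le 3$ gives the dimension count making this surjection an isomorphism in degrees $\le 3$, whence $q({\rm gr}^F C)\cong Q$ and ${\rm gr}^F C$ has no degree-$3$ defining relations, and Theorem \ref{filtrationTrick} finishes. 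This part of your proposal is sound (one small omission: to get the surjection $Q\twoheadrightarrow{\rm gr}^F C$ you must also verify that $R_A$ and the relations of $D$ hold in ${\rm gr}^F C$, not only $\overline{w}\,\overline{u}=0$; both are easy, since $f$ kills $R_A$ in $C$, and a lift to $R_C$ of a relation of $D$ differs from it by terms in $f(A_1)C_1+C_1f(A_1)\subseteq f(A_1)C_1$, which lies in strictly lower filtration).

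The genuine gap is your primary argument for the remaining step, Koszulity of $Q$. The Koszul complex of $Q$ is \emph{not} the tensor product of the Koszul complexes of $A$ and $D$, and the mixed components of the differential do not decouple; they degenerate, which is a different phenomenon. Concretely, $D_+A_+=0$ in $Q$ means right multiplication by $x\in A_1$ annihilates $A\otimes D_+\subseteq Q$, and dually $Q^!$ satisfies $A^!_+D^!_+=0$, so the $A_1$-part of the Koszul differential vanishes on every summand $Q\otimes(D^!_i\,A^!_j)^*$ with $i>0$ and takes values only in summands with $i=0$: the differential is emphatically not of the form $d_A\otimes 1\pm 1\otimes d_D$, and the K\"unneth theorem does not apply. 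What is true is that the Koszul complex of $Q$ is the total complex of a double complex whose ``horizontal'' differential lives only on the bottom row; its columns, because $Q$ is free as a right $D$-module, are $A\otimes\bigl(D\otimes(D^!_{\bullet})^*\bigr)\otimes(A^!_j)^*$ and hence exact except at the bottom, where the induced complex is the Koszul complex of $A$ --- so a column-filtration spectral sequence closes the argument, but that is a genuinely different proof from the one you describe. Your parenthetical fallback, Backelin distributivity with respect to the splitting $C_1=f(A_1)\oplus W$, is the standard correct route (it is essentially how \cite{PP} dispose of this step, via their closure-under-operations results), but as written it is only a gesture: the lemma that the lattice generated by word-homogeneous subspaces of $(A_1\oplus W)^{\otimes n}$ is distributive if and only if each word-component lattice is, together with the reduction of each word component to runs governed by $A$ and by $D$ separately, still has to be proved. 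Note finally that within this paper you could not instead quote Theorem \ref{oneSidedKoszul}, since its proof rests on the very corollary at issue; your observation that Theorem \ref{pure graded Koszul} is unavailable here (the zero twist is not invertible) is correct.
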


\section{Existence of graded twisting maps}
\label{existence}

In the graded setting, it is often convenient to define graded twisting maps inductively. We now describe the inductive step in this process, which we refer to as the \emph{twisting map extension problem}. Throughout this section let $A$ and $B$ denote connected, graded $\k$-algebras.

Let $t:(B\tsr A)_{\le n} \to (A\tsr B)_{\le n}$ be a $\k$-linear map that is graded twisting to degree $n$. 
Let $t^A = \pi^A\circ t$, where $\pi^A$ is the composition of canonical maps 
$$(A \tsr B)_{\le n} \to \bigoplus_{i\le n} A_i\tsr B_0\to A_{\le n}.$$ 
Analogously, let $\pi^B$ be the composition
$$(A \tsr B)_{\le n} \to \bigoplus_{i\le n} A_0\tsr B_i\to B_{\le n}$$ and put $t^B = \pi^B\circ t$.
Let $i_A:A\to A\tsr B$ and $i_B:B\to A\tsr B$ be the canonical inclusions.

Let $\d:(B_+\tsr A_+\tsr A_+)_{\le n+1}\oplus (B_+\tsr B_+\tsr A_+)_{\le n+1} \to (B_+\tsr A_+)_{\le n+1}$ be the $\k$-linear map defined by
$$\d=(1_B\tsr \mu_A - t^B\tsr 1_A)\oplus (\mu_B\tsr 1_A - 1_B\tsr t^A)$$
and let $R:(B_+\tsr A_+\tsr A_+)_{\le n+1}\oplus (B_+\tsr B_+\tsr A_+)_{\le n+1} \to (A\tsr B)_{\le n+1}$ be given by
\begin{align*}
R&=(\mu_A\tsr 1_B)(1_A\tsr t)((t-i_Bt^B)\tsr 1_A)\\
&\oplus (1_A\tsr \mu_B)(t\tsr 1_B)(1_B\tsr(t-i_At^A))
\end{align*}
It is important to note that in this context the compositions $(1_A\tsr t)((t-i_Bt^B)\tsr 1_A)$ and $(t\tsr 1_B)(1_B\tsr(t-i_At^A))$ are well-defined, but if $t$ is two-sided, $(1_A\tsr t)(t\tsr 1_A)$ and $(t\tsr 1_B)(1_B\tsr t)$ may not be.

\begin{lemma}[Twisting Map Extension Problem]
\label{TMEP}
Let $t:(B\tsr A)_{\le n} \to (A\tsr B)_{\le n}$ be a $\k$-linear map that is graded twisting to degree $n$.
There exists a $\k$-linear extension $t':(B\tsr A)_{\le n+1}\to (A\tsr B)_{\le n+1}$ of $t$ that is graded twisting to degree $n+1$ if and only if there exists a $\k$-linear map $f:(B_+\tsr A_+)_{n+1}\to (A\tsr B)_{n+1}$ satisfying $f\d_{n+1}=R_{n+1}$. If such an extension $t'$ exists, it is unique if and only if $\d_{n+1}$ is surjective.
\end{lemma}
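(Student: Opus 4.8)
The plan is to reduce everything to the two one-sided identities of Remark~\ref{AltTwistingDef} evaluated in the top degree, and to track carefully which occurrences of $t'$ fall in degree $n+1$ versus degree $\le n$.

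First I would pin down the degrees of freedom in an extension. Any extension $t'$ restricts to $t$ in degrees $\le n$, and the unital twisting conditions force $t'(1_B\tsr a)=a\tsr 1_B$ on $B_0\tsr A_{n+1}$ and $t'(b\tsr 1_A)=1_A\tsr b$ on $B_{n+1}\tsr A_0$. Thus the only undetermined data is the restriction $f:=t'_{n+1}|_{(B_+\tsr A_+)_{n+1}}\colon (B_+\tsr A_+)_{n+1}\to(A\tsr B)_{n+1}$, and conversely every such $f$ determines a unique $\k$-linear extension $t'$ obeying the unital conditions. So extensions are in bijection with maps $f$, and it remains to decide when $t'$ is graded twisting to degree $n+1$. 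By Remark~\ref{AltTwistingDef}, since $t$ is already twisting to degree $n$ and $t'$ agrees with it there, this holds exactly when the two identities $t'_{n+1}(1_B\tsr\mu_A)=(\mu_A\tsr 1_B)(1_A\tsr t'_{\le n+1})(t'_{\le n+1}\tsr 1_A)$ and $t'_{n+1}(\mu_B\tsr 1_A)=(1_A\tsr\mu_B)(t'_{\le n+1}\tsr 1_B)(1_B\tsr t'_{\le n+1})$ hold in degree $n+1$. On the summands of $(B\tsr A\tsr A)_{n+1}$ and $(B\tsr B\tsr A)_{n+1}$ carrying a tensor factor in degree $0$, both sides of each identity collapse, via the unital conditions on $t'$ together with the degree-$\le n$ identities for $t$, to expressions that already agree (and that impose no condition on $f$); so it suffices to verify the first identity on $(B_+\tsr A_+\tsr A_+)_{n+1}$ and the second on $(B_+\tsr B_+\tsr A_+)_{n+1}$.

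Next I would expand the first identity on $(B_+\tsr A_+\tsr A_+)_{n+1}$. Here all three factors are positive, so $1_B\tsr\mu_A$ lands in $(B_+\tsr A_+)_{n+1}$ and the left side equals $f(1_B\tsr\mu_A)$. On the right, the outer $t'_{\le n+1}\tsr 1_A$ acts on the first two factors, whose total degree is $\le n$, so there $t'=t$. Splitting $t=i_Bt^B+(t-i_Bt^B)$ is the crucial move: the summand $(t-i_Bt^B)\tsr 1_A$ lands in $A_+\tsr B\tsr A_+$ with positive first factor, so the subsequent $1_A\tsr t'$ evaluates $t'$ in degree $\le n$ (hence as $t$), producing exactly the first component $R^{(1)}_{n+1}$ of $R$; whereas $i_Bt^B\tsr 1_A$ lands in $A_0\tsr B_+\tsr A_+$, so $1_A\tsr t'$ now evaluates $t'$ in the top degree as $f$, and $\mu_A\tsr 1_B$ merely absorbs the $A_0=\k\cdot 1_A$ factor, contributing $f(t^B\tsr 1_A)$. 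The first identity thus reads $f(1_B\tsr\mu_A-t^B\tsr 1_A)=R^{(1)}_{n+1}$, i.e. $f\,\d^{(1)}_{n+1}=R^{(1)}_{n+1}$. The symmetric computation on $(B_+\tsr B_+\tsr A_+)_{n+1}$, splitting $t=i_At^A+(t-i_At^A)$, yields $f\,\d^{(2)}_{n+1}=R^{(2)}_{n+1}$; together these are precisely $f\d_{n+1}=R_{n+1}$. This bookkeeping step is the main obstacle: one must verify that the only evaluations of $t'$ landing in degree $n+1$ are exactly those produced by the $i_Bt^B$ and $i_At^A$ summands, which is precisely what forces the correction terms $i_Bt^B,\ i_At^A$ built into $R$ and makes the ``dangerous'' self-compositions $(1_A\tsr t)(t\tsr 1_A)$ never occur.

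Finally, for the uniqueness statement I would argue by linear algebra. Given existence, fix a solution $f_0$; the full solution set is the coset $f_0+\{g\colon g\,\d_{n+1}=0\}$, and $\{g\colon g\,\d_{n+1}=0\}$ consists of the maps vanishing on $\im\d_{n+1}$. This space is zero precisely when $\d_{n+1}$ is surjective: if it is not surjective then $(B_+\tsr A_+)_{n+1}\neq 0$, and since $\dim(A\tsr B)_{n+1}\ge\dim(B_+\tsr A_+)_{n+1}$ (each $B_i\tsr A_j$ with $i,j\ge 1$, $i+j=n+1$, is matched in dimension by $A_j\tsr B_i\subseteq(A\tsr B)_{n+1}$), the target is nonzero and a nonzero $g$ vanishing on $\im\d_{n+1}$ exists, giving a second extension. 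Transporting this through the bijection of the first paragraph, the extension $t'$ is unique if and only if $\d_{n+1}$ is surjective.
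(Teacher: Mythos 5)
Your proposal is correct and follows essentially the same route as the paper's proof: reduce to the two one-sided identities of Remark~\ref{AltTwistingDef} in degree $n+1$, split $t$ as $i_Bt^B+(t-i_Bt^B)$ (resp.\ $i_At^A+(t-i_At^A)$) to see that the only top-degree evaluations of $t'$ are those giving $f(t^B\tsr 1_A)$ and $f(1_B\tsr t^A)$, so that the twisting identities are equivalent to $f\d_{n+1}=R_{n+1}$, and then settle uniqueness by linear algebra on the solution set. Your explicit checks that the degree-zero summands impose no conditions and that the target $(A\tsr B)_{n+1}$ is nonzero in the non-surjective case are points the paper leaves implicit, but they do not constitute a different argument.
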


If $\t$ is a graded twisting map, then the twisting map extension problem has a solution for every $n$. We will show shortly that non-uniqueness of these extensions is captured precisely by the minimal generators of $I_{\t}$. 


\begin{proof}
First, suppose $t'$ is any $\k$-linear extension of $t$. Note that since $\im\ i_Bt^B\subseteq A_0\tsr B$, 
$$(\mu_A\tsr 1_B)(1_A\tsr t')(i_Bt^B\tsr 1_A)= t'(t^B\tsr 1_A) \ \ \ (1).$$ 

Now assume that $t'$ is graded twisting to degree $n+1$ and put $f=t'|_{(B_+\tsr A_+)_{n+1}}$.
By Remark \ref{AltTwistingDef},
\begin{align*}
f(1_B\tsr \mu_A-t^B\tsr 1_A)_{n+1} &= (\mu_A\tsr 1_B)(1_A\tsr t')(t'\tsr 1_A)_{n+1}- t'(t^B\tsr 1_A)_{n+1}\\
&= (\mu_A\tsr 1_B)(1_A\tsr t')((t'-i_Bt^B)\tsr 1_A)_{n+1}.
\end{align*}
On elements of $(B_+\tsr A_+\tsr A_+)_{n+1}$, we have $t'\tsr 1_A=t\tsr 1_A$. Furthermore, 
$$((t-i_Bt^B)\tsr 1_A)(B_+\tsr A_+\tsr A_+)_{n+1} \subseteq (A_+\tsr B\tsr A_+)_{n+1}.$$
Thus,
\begin{align*} 
f(1_B \tsr \m_A-t^B\tsr 1_A)_{n+1} &= (\mu_A\tsr 1_B)(1_A\tsr t')((t'-i_Bt^B)\tsr 1_A)_{n+1} \\
&=(\mu_A\tsr 1_B)(1_A\tsr t)((t-i_Bt^B)\tsr 1_A)_{n+1}
\end{align*}
on $(B_+\tsr A_+\tsr A_+)_{n+1}$. An analogous calculation shows
$$f(\mu_B\tsr 1 - 1\tsr t^A)_{n+1}=(1_A\tsr \mu_B)(t\tsr 1_B)(1_B\tsr(t-i_At^A))_{n+1}$$
on $(B_+\tsr B_+\tsr A_+)_{n+1}$. Hence $f\d_{n+1}=R_{n+1}$.

Now suppose there exists a $\k$-linear map $f:(B_+\tsr A_+)_{n+1}\to (A\tsr B)_{n+1}$ such that $f\d_{n+1}=R_{n+1}$. Extend $f$ to $f':(B\tsr A)_{n+1} \to (A\tsr B)_{n+1}$ by $f'(1_B\tsr a) = a\tsr 1_B$ and $f'(b\tsr 1_A)=1_A\tsr b$ for all $a\in A_{n+1}$ and $b\in B_{n+1}$. Define $t'=t\oplus f'$. We must show $t'$ is graded twisting to degree $n+1$. It suffices to show $t'$ is graded twisting in degree $n+1$. We have,

\begin{align*}
t'(1_B\tsr \mu_A)_{n+1} &= f'(1_B\tsr \mu_A)_{n+1}\\ 
&= t'(t^B\tsr 1_A)+(\mu_A\tsr 1_B)(1_A\tsr t)((t-i_Bt^B)\tsr 1_A)_{n+1}\\
&= (\mu_A\tsr 1_B)(1_A\tsr t')(i_Bt^B\tsr 1_A)+(\mu_A\tsr 1_B)(1_A\tsr t')((t'-i_Bt^B)\tsr 1_A)_{n+1}\\
&=(\mu_A\tsr 1_B)(1_A\tsr t')(t'\tsr 1_A)_{n+1}.
\end{align*}
The second equality follows from $f\d_{n+1}=R_{n+1}$ and the third is given by (1) above.
%
An analogous calculation shows 
$$t'(\mu_B\tsr 1_A)_{n+1}=(1_A\tsr \mu_B)(t'\tsr 1_B)(1_B\tsr t')_{n+1}.$$
By Remark \ref{AltTwistingDef}, this completes the proof of the existence part of the Lemma.

If $\d_{n+1}$ is surjective, let $f'$ be another linear map satisfying $f'\d_{n+1}=R_{n+1}$. Then for any $x\in (B\tsr A)_{n+1}$, we have $(f-f')(x) = (f-f')\d(y) = (R-R)(y)=0$. So $f$ is unique.

If $\d_{n+1}$ is not surjective, let $W$ be a (nontrivial) complementary subspace of $\im\ \d_{n+1}$ in $(B\tsr A)_{n+1}$. Define $f'=f$ on $\im\ \d$ and freely define $f'$ on $W$ such that $f'|_W\neq f|_W$. Then $f'\d_{n+1}=R$ but $f'\neq f$.
\end{proof}

The following simpler condition for extending a linear map that is graded twisting to some degree is often useful in practice.

\begin{thm}
\label{simpler extension condition}
Let $A$ and $B$ be one-generated $\k$-algebras. Let $n$ be a positive integer. Suppose that $t: (B \tsr A)_{\leq n+1} \to (A \tsr B)_{\leq n+1}$ is a graded $\k$-linear map such that $t$ satisfies the unital conditions: $t(1 \tsr a) = a \tsr 1$ and $t(b \tsr 1) = 1 \tsr b$, and 
\begin{enumerate}
\item[(1)] $t \big|_{(B \tsr A)_{\leq n}}$ is a twisting map to degree $n$;
\item[(2)] $t(1_B \tsr \m_A) = (\m_A \tsr 1_B)(1_A \tsr t)(t \tsr 1_A)$ on $(B \tsr A \tsr A_1)_{n+1}$;
\item[(3)] $t(\m_B \tsr 1_A) = (1_A \tsr \m_B)(t \tsr 1_B)(1_B \tsr t)$ on $(B_1 \tsr B \tsr A)_{n+1}$.
\end{enumerate}
Then $t$ is a twisting map to degree $n+1$.
\end{thm}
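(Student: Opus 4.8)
The plan is to verify, in degree $n+1$, the two identities of Remark~\ref{AltTwistingDef}. By that remark, $t$ is graded twisting to degree $n+1$ exactly when, for every $j\le n+1$, $t_j(1_B\tsr\mu_A)=(\mu_A\tsr 1_B)(1_A\tsr t_{\le j})(t_{\le j}\tsr 1_A)$ and $t_j(\mu_B\tsr 1_A)=(1_A\tsr\mu_B)(t_{\le j}\tsr 1_B)(1_B\tsr t_{\le j})$. Condition~(1) supplies both identities for all $j\le n$, so only their degree-$(n+1)$ components remain. These play symmetric roles: the first is an identity on $(B\tsr A\tsr A)_{n+1}$ to be extracted from condition~(2), the second an identity on $(B\tsr B\tsr A)_{n+1}$ to be extracted from condition~(3), and the two arguments are mirror images under interchanging the roles of $A$ and $B$ (together with one-generation of $A$ versus $B$). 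I would therefore prove the first identity in full and only indicate the transcription for the second.

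For the first identity I would exploit that $A$ is one-generated: every homogeneous element of $A_{\ge 1}$ is a sum of products $a''a'''$ with $a'''\in A_1$. By linearity it then suffices to check $t(1_B\tsr\mu_A)=(\mu_A\tsr 1_B)(1_A\tsr t)(t\tsr 1_A)$ on a typical $b\tsr a\tsr a'\in (B\tsr A\tsr A)_{n+1}$ in the cases $a'\in A_0$, $a'\in A_1$, or $a'=a''a'''$ with $a'''\in A_1$ and $\deg a''\ge 1$. The case $a'\in A_0$ is immediate from the unital conditions, and the case $a'\in A_1$ is exactly condition~(2). In the remaining case I would rewrite $t(b\tsr aa')=t(b\tsr (aa'')a''')$ and apply condition~(2) (the final tensor factor $a'''$ lies in $A_1$ and the total degree is $n+1$) to peel off $a'''$; the inner term $t(b\tsr aa'')$ then lives in total degree $n$, so condition~(1) lets me peel off $a''$ as well. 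Writing $t(b\tsr a)=a_t\tsr b_t$ in Sweedler form and comparing the result with $(\mu_A\tsr 1_B)(1_A\tsr t)(t(b\tsr a)\tsr a')$, everything reduces to the single identity $t(b_t\tsr a''a''')=(\mu_A\tsr 1_B)(1_A\tsr t)(t(b_t\tsr a'')\tsr a''')$.

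The step I expect to be the crux is justifying this last reduced identity without circularity. It is an instance of the first identity of Remark~\ref{AltTwistingDef} evaluated at $b_t\tsr a''\tsr a'''$, whose total degree is $\deg b_t+\deg a'=(n+1)-\deg a_t$. When $\deg a_t\ge 1$ this degree is at most $n$ and the identity is furnished by condition~(1); but when $\deg a_t=0$ the degree is exactly $n+1$, which is the very statement being constructed. The resolution I would emphasize is a bookkeeping one: in this reduced identity the final tensor factor is $a'''\in A_1$, so its degree-$(n+1)$ case is precisely an instance of condition~(2), not of the degree-$(n+1)$ identity for a higher-degree last factor. Hence every instance invoked is covered by the hypotheses, the circularity is only apparent, and the first identity holds on $(B\tsr A\tsr A)_{n+1}$. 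Running the mirror-image argument — using that $B$ is one-generated, factoring the first $B$-tensor factor through $B_1$, and invoking condition~(3) in place of~(2) — yields the second identity on $(B\tsr B\tsr A)_{n+1}$, and with both identities in hand Remark~\ref{AltTwistingDef} shows that $t$ is graded twisting to degree $n+1$.
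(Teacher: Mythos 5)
Your proposal is correct and follows essentially the same route as the paper's proof: both use one-generation of $A$ to peel off a final $A_1$-factor via condition (2), reduce the resulting inner term via condition (1), and resolve the apparent circularity by observing that the only degree-$(n+1)$ instances that then arise have final tensor factor in $A_1$ and so are again instances of condition (2) — this is exactly the mechanism of the paper's claims (i) and (ii), where the $A_0\tsr B$ component of $t(b\tsr a_1)$ is split off as $i_Bt^B$. The only cosmetic difference is that you verify the identities of Remark \ref{AltTwistingDef} in degree $n+1$ directly, whereas the paper packages the same computation as the criterion $f\delta_{n+1}=R_{n+1}$ of Lemma \ref{TMEP}.
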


\begin{proof}

Let $f = t |_{(B_+ \tsr A_+)_{n+1}}$. We must show $f \d_{n+1} = R_{n+1}$. We first prove that     $f(1_B \tsr \m_A) = R_{n+1} + f(t^B \tsr 1_A).$

Let $b \tsr a_1 \tsr a_2 \in (B_+ \tsr A_+ \tsr A_+)_{n+1}$, where $\deg(a_2) \geq 2$. Write $t(b \tsr a_1) = l(b \tsr a_1) + \sum_i 1_A \tsr b_i,$ where $l(b \tsr a_1) = (t-i_B t^B)(b \tsr a_1) \in A_+ \tsr B$ and $\sum_i 1_A \tsr b_i = i_B t^B(b \tsr a_1)$.

%
%
%
%
%

Since $A$ is one-generated, we have $a_2 = \m_A(\sum_j a_j' \tsr x_j)$ for some $x_j \in A_1$. 

We compute
\begin{align*}
t(b \tsr a_1 a_2) &= t(b \tsr a_1\m_A(\sum_j a_j' \tsr x_j)) \\
&= \sum_j t(b \tsr \m_A(a_1a_j' \tsr x_j)) \\
&= \sum_j t(1_B \tsr \m_A)(b \tsr a_1a_j' \tsr x_j) \\
&= \sum_j(\m_A \tsr 1_B)(1_A \tsr t)(t \tsr 1_A)(b \tsr a_1a_j' \tsr x_j) \text{ using condition } (2),\\
&= \sum_j (\m_A \tsr 1_B)(1_A \tsr t)(t(b \tsr a_1a_j') \tsr x_j) \\
&= \sum_j (\m_A \tsr 1_B)(1_A \tsr t)[(\m_A \tsr 1_B)(1_A \tsr t)(t \tsr 1_A)(b \tsr a_1 \tsr a_j') \tsr x_j] \text{ using condition } (1), \\
&= \sum_j (\m_A \tsr 1_B)(1_A \tsr t)[(\m_A \tsr 1_B)(1_A \tsr t)((l(b \tsr a_1) + \sum_i 1 \tsr b_i) \tsr a_j')\tsr x_j] \\
&= \sum_j (\m_A \tsr 1_B)(1_A \tsr t)[(\m_A \tsr 1_B)(1_A \tsr t)((l(b \tsr a_1) \tsr a_j') \tsr x_j] )\\ 
& \ \ + \sum_j(\m_A \tsr 1_B)(1_A \tsr t)[\sum_i(\m_A \tsr 1_B)(1_A \tsr t)(1 \tsr b_i \tsr a_j') \tsr x_j].
\end{align*}

We next make two claims.

$$\sum_j(\m_A \tsr 1_B)(1_A \tsr t)[(\m_A \tsr 1_B)(1_A \tsr t)(l(b \tsr a_1) \tsr a_j') \tsr x_j] = R_{n+1}(b \tsr a_1 \tsr a_2) \text{ (i)},$$

$$\sum_j(\m_A \tsr 1_B)(1_A \tsr t)[\sum_i(\m_A \tsr 1_B)(1_A \tsr t)(1 \tsr b_i \tsr a_j')\tsr x_j] = f(t^B \tsr 1_A)(b \tsr a_1 \tsr a_2) \text{ (ii)}.$$


Before starting the proof of (i) we note that $$(1_A \tsr t)[(\mu_A \tsr 1_B)(1_A \tsr t)\tsr 1_A] = (\m_A \tsr 1_A \tsr 1_B)(1_A \tsr (1_A \tsr t)(t \tsr 1_A)).$$ 


Now we prove (i).

\begin{align*}
&R_{n+1}(b \tsr a_1 \tsr a_2) \\
&= (\m_A \tsr 1_B)(1_A \tsr t)(l(b \tsr a_1) \tsr a_2) \\
&= (\m_A \tsr 1_B)(1_A \tsr t(1_B \tsr \m_A))(l(b \tsr a_1) \tsr \sum_j a_j' \tsr x_j) \\
&= (\m_A \tsr 1_B)(1_A \tsr (\m_A \tsr 1_B)(1_A \tsr t)(t \tsr 1_A))(\sum_j l(b \tsr a_1) \tsr a_j' \tsr x_j) \text{ using condition } (1), \\
&= (\m_A \tsr 1_B)( \m_A \tsr 1_A \tsr 1_B)(1_A \tsr (1_A \tsr t)(t \tsr 1_A))(\sum_j l(b \tsr a_1) \tsr a_j' \tsr x_j) \text{ since } \m_A \text{ is associative, } \\
&= (\m_A \tsr 1_B)(1_A \tsr t)[(\m_A \tsr 1_B)(1_A \tsr t) \tsr 1_A](\sum_j l(b \tsr a_1) \tsr a_j' \tsr x_j) \text{ using the note above, } \\
& = \sum_j(\m_A \tsr 1_B)(1_A \tsr t)[(\m_A \tsr 1_B)(1_A \tsr t) \tsr 1_A](l(b \tsr a_1) \tsr a_j' \tsr x_j), \text{ as desired.}
\end{align*}

The proof of (ii) is more transparent.

\begin{align*} 
&\sum_j(\m_A \tsr 1_B)(1_A \tsr t)[\sum_i(\m_A \tsr 1_B)(1_A \tsr t)(1 \tsr b_i \tsr a_j') \tsr x_j] \\
 & = \sum_j(\m_A \tsr 1_B)(1_A \tsr t)[(\m_A \tsr 1_B)(1_A \tsr t) \tsr 1_A](\sum_i 1 \tsr b_i \tsr a_j' \tsr x_j) \\
&= \sum_j( \m_A \tsr 1_B)(1_A \tsr t)(t \tsr 1_A)(\sum_i b_i \tsr a_j' \tsr x_j) \\
&= \sum_i t(b_i \tsr a_2) \text{ using condition } (2), \\
&= f(t^B \tsr 1_A)(b \tsr a_1 \tsr a_2).
\end{align*}

We leave it to the reader to check that $f(\m_B \tsr 1_A) = R_{n+1}+f(1_B \tsr t^A)$. 
\end{proof}

In some cases one can construct graded twisting maps on algebras with relations by first defining a twisting map on free algebras and then checking that the ideals of relations are preserved. We first address the case of free algebras, in particular, we examine the question of when defining a linear map on free generators extends to a graded twisting map on free algebras. (We consider the uniqueness of such extensions in the next section.)

We start with some background material following \cite{Cap}. If $V$ and $W$ are $\k$-vector spaces, let $L(V, W)$ denote the space of all linear maps from $V$ to $W$. Let $A$ and $B$ be associative, unital, $\k$-algebras and let $V$ be a $\k$-vector space. Then $L(V, V \tsr B)$ and $L(V, A \tsr V)$ are associative, unital, $\k$-algebras. Their unit elements are $v \mapsto v \tsr 1_B$ and $v \mapsto 1_A \tsr v$, respectively. The multiplication in $L(V, V \tsr B)$ is given by $$\phi \star \psi = (1_V \tsr \mu_B)(\phi \tsr 1_B) \psi;$$ the multiplication in $L(V, A \tsr V)$ is given by $$\phi \star \psi = (\mu_A \tsr 1_V)(1_A \tsr \psi)\phi.$$

\begin{prop}[\cite{Cap}, Proposition 2.6] 
Let $\t: B \tsr A \to A \tsr B$ be a $\k$-linear map. Define linear maps $\t^B: B \to L(A, A \tsr B)$ and $\t^A: A \to L(B, A \tsr B)$ by, for all $a \in A$ and $b \in B$, $\t^B(b)(a) = \t(b \tsr a)$ and $\t^A(a)(b) = \t(b \tsr a)$. Then $\t$ is a twisting map if and only if $\t^A$ and $\t^B$ are algebra homomorphisms.
\end{prop}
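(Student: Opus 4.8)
The plan is to reduce the statement to the alternative characterization of twisting maps recorded in Remark \ref{AltTwistingDef}, then match its two identities against the statements that $\t^B$ and $\t^A$, respectively, are algebra homomorphisms. By that remark, $\t$ is a twisting map precisely when the unital twisting conditions hold together with
\begin{align*}
\t(1_B\tsr \mu_A) &= (\mu_A\tsr 1_B)(1_A\tsr\t)(\t\tsr 1_A),\\
\t(\mu_B\tsr 1_A) &= (1_A\tsr \mu_B)(\t\tsr 1_B)(1_B\tsr \t).
\end{align*}
So I would prove the following correspondence: $\t^A$ is an algebra homomorphism if and only if the first identity and the condition $\t(b\tsr 1_A)=1_A\tsr b$ hold, while $\t^B$ is an algebra homomorphism if and only if the second identity and the condition $\t(1_B\tsr a)=a\tsr 1_B$ hold.

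First I would dispose of units. Since $\t^B(1_B)(a)=\t(1_B\tsr a)$ and the unit of $L(A,A\tsr B)$ is $a\mapsto a\tsr 1_B$, the map $\t^B$ is unital exactly when $\t(1_B\tsr a)=a\tsr 1_B$ for all $a$; symmetrically, since $\t^A(1_A)(b)=\t(b\tsr 1_A)$ and the unit of $L(B,A\tsr B)$ is $b\mapsto 1_A\tsr b$, the map $\t^A$ is unital exactly when $\t(b\tsr 1_A)=1_A\tsr b$ for all $b$. These are precisely the two unital twisting conditions.

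The substantive content is the multiplicativity computation, which is where the bookkeeping must be done with care. For $\t^B$, I would regard both $\t^B(bb')$ and $\t^B(b)\star\t^B(b')$ as elements of $L(A,A\tsr B)$, that is, as maps $A\to A\tsr B$, and then as maps $B\tsr B\tsr A\to A\tsr B$ by letting $b,b'$ vary. On one side, $\t^B(bb')(a)=\t(\mu_B\tsr 1_A)(b\tsr b'\tsr a)$. On the other side, unwinding $\phi\star\psi=(1_A\tsr\mu_B)(\phi\tsr 1_B)\psi$ with $\phi=\t^B(b)$ and $\psi=\t^B(b')$, and tracking which tensor factor each occurrence of $\t$ acts on, yields
$$(\t^B(b)\star\t^B(b'))(a)=(1_A\tsr\mu_B)(\t\tsr 1_B)(1_B\tsr\t)(b\tsr b'\tsr a).$$
Hence $\t^B$ is multiplicative if and only if the second displayed identity holds. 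An entirely analogous unwinding of $\phi\star\psi=(\mu_A\tsr 1_B)(1_A\tsr\psi)\phi$ in $L(B,A\tsr B)$, with $\phi=\t^A(a)$ and $\psi=\t^A(a')$, gives
$$(\t^A(a)\star\t^A(a'))(b)=(\mu_A\tsr 1_B)(1_A\tsr\t)(\t\tsr 1_A)(b\tsr a\tsr a'),$$
while $\t^A(aa')(b)=\t(1_B\tsr\mu_A)(b\tsr a\tsr a')$, so $\t^A$ is multiplicative if and only if the first displayed identity holds. Combining the unit and multiplicativity equivalences, $\t^A$ and $\t^B$ are both algebra homomorphisms if and only if all four conditions of Remark \ref{AltTwistingDef} hold, i.e.\ if and only if $\t$ is a twisting map.

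The step I expect to be the main obstacle is the verification of the two displayed map identities for the $\star$-products, because the definitions of $\star$ hide $b$ (respectively $a$) inside the map $\phi$: one must recognize that $(\t^B(b)\tsr 1_B)$ amounts to ``apply $\t$ in the $b$-slot and the first tensor factor'' in order to read off the composite $(\t\tsr 1_B)(1_B\tsr\t)$, and similarly for $\t^A$. Once the arguments $b,b'$ (respectively $a,a'$) are treated as free variables and the $\star$-products are read as composites on $B\tsr B\tsr A$ (respectively $B\tsr A\tsr A$), the identities follow by direct inspection, and the equivalence becomes purely formal.
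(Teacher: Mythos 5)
Your proof is correct: the unit computations and the unwinding of the two $\star$-products (reading $\t^B(b)\star\t^B(b')$ as $(1_A\tsr\mu_B)(\t\tsr 1_B)(1_B\tsr\t)$ on $B\tsr B\tsr A$, and $\t^A(a)\star\t^A(a')$ as $(\mu_A\tsr 1_B)(1_A\tsr\t)(\t\tsr 1_A)$ on $B\tsr A\tsr A$) are exactly right, as is your care in reading Remark~\ref{AltTwistingDef} as requiring the unital twisting conditions alongside the two identities. The paper itself offers no proof of this proposition, deferring to \cite{Cap}, and your argument is precisely the standard currying computation underlying the cited result, so it matches the intended approach.
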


Now we discuss a partial converse of this result. Given an algebra homomorphism $\tau^B: B \to L(A, A \tsr B)$ we will say that $\t^B$ {\it respects multiplication in $A$} if for all $a_1, a_2 \in A$ and $b \in B$, $$\tau^B(b)(a_1 a_2) = (\mu_A \tsr 1_B)(1_A \tsr \ev_{a_2}\tau^B)(\tau^B(b)(a_1)),$$ where $\ev_a: L(A, A \tsr B) \to A \tsr B$ is the map determined by evaluating at $a \in A$. 


\begin{prop} Suppose that $\t^B: B \to L(A, A \tsr B)$ is an algebra homomorphism that respects multiplication in $A$. Define $\tau^A: A \to L(B, A \tsr B)$ by $\tau^A(a)(b) = \tau^B(b)(a)$ for all $a \in A$ and $b \in B$. Then $\tau^A$ is an algebra homomorphism. 

\end{prop}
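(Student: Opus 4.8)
The plan is to verify directly that $\tau^A$ preserves products and the unit. The governing observation is that for each $a \in A$ the map $\tau^A(a)\colon B \to A \tsr B$ is exactly $\ev_a \tau^B$, since both send $b$ to $\tau(b \tsr a)$. For multiplicativity I would then unwind the star-product of $L(B, A \tsr B)$: using $\phi \star \psi = (\mu_A \tsr 1_B)(1_A \tsr \psi)\phi$ and evaluating at $b \in B$,
$$(\tau^A(a_1) \star \tau^A(a_2))(b) = (\mu_A \tsr 1_B)(1_A \tsr \tau^A(a_2))(\tau^A(a_1)(b)) = (\mu_A \tsr 1_B)(1_A \tsr \ev_{a_2}\tau^B)(\tau^B(b)(a_1)),$$
where the second equality uses $\tau^A(a_1)(b) = \tau^B(b)(a_1)$ and $\tau^A(a_2) = \ev_{a_2}\tau^B$. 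By the hypothesis that $\tau^B$ respects multiplication in $A$, the right-hand side equals $\tau^B(b)(a_1 a_2) = \tau^A(a_1 a_2)(b)$, so $\tau^A(a_1 a_2) = \tau^A(a_1) \star \tau^A(a_2)$. Thus multiplicativity of $\tau^A$ is a direct transcription of the respects-multiplication hypothesis, the only subtlety being the bookkeeping of tensor factors when commuting $1_A \tsr \tau^A(a_2)$ past the evaluation map.

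For unit preservation I would check that $\tau^A(1_A)$ is the unit $b \mapsto 1_A \tsr b$ of $L(B, A \tsr B)$, i.e. that $\tau^A(1_A)(b) = \tau^B(b)(1_A) = 1_A \tsr b$. This is exactly the unital twisting condition $\tau(b \tsr 1_A) = 1_A \tsr b$, while the companion condition $\tau(1_B \tsr a) = a \tsr 1_B$ is already supplied by unitality of the homomorphism $\tau^B$; combining the two items yields that $\tau^A$ is an algebra homomorphism.

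The point requiring the most care is the unit: the multiplicative identity is forced by the respects-multiplication hypothesis alone, whereas unit preservation genuinely uses that $\tau$ respects the unit of $A$, so I would invoke the unital twisting conditions on $\tau$ at that step. Equivalently, one can route the whole argument through Remark \ref{AltTwistingDef}: the hypothesis that $\tau^B$ is a homomorphism yields $\tau(\mu_B \tsr 1_A) = (1_A \tsr \mu_B)(\tau \tsr 1_B)(1_B \tsr \tau)$, and respecting multiplication in $A$ yields $\tau(1_B \tsr \mu_A) = (\mu_A \tsr 1_B)(1_A \tsr \tau)(\tau \tsr 1_A)$; together with the unital conditions these exhibit $\tau$ as a graded twisting map, whereupon Proposition 2.6 of \cite{Cap} returns that $\tau^A$ is an algebra homomorphism.
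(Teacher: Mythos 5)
Your multiplicativity argument is exactly the paper's proof: the paper computes $\tau^A(a_1a_2)(b) = \tau^B(b)(a_1a_2)$, applies the respects-multiplication hypothesis, recognizes $\ev_{a_2}\tau^B = \tau^A(a_2)$, and identifies the result as $[\tau^A(a_1)\star\tau^A(a_2)](b)$; you run the same chain of equalities in the opposite direction. That part is correct and complete, and it is all the paper itself proves.

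Where you go beyond the paper is the unit, and there your proposed fix does not work as stated: the proposition's hypotheses are only that $\tau^B$ is an algebra homomorphism that respects multiplication in $A$. There is no map $\tau$ with ``unital twisting conditions'' available to invoke --- $\tau$ is only introduced afterward, in Corollary \ref{algebra map determines twisting map}, and the unital conditions for it are part of what that corollary asserts, not a standing assumption. Unitality of $\tau^B$ gives $\tau^B(1_B)(a) = a\tsr 1_B$, but nothing in the hypotheses forces $\tau^B(b)(1_A) = 1_A \tsr b$: for instance, taking $\tau^B(1_B)$ to be the unit of $L(A, A\tsr B)$ and $\tau^B(b)=0$ for all $b\in B_+$ yields an algebra homomorphism respecting multiplication in $A$ for which $\tau^A$ is multiplicative but not unital. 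So unit preservation genuinely cannot be derived from the stated hypotheses; the paper implicitly reads ``algebra homomorphism'' in the conclusion as ``multiplicative map'' and checks only that. Your instinct that the unit needs an extra input is correct, but the input you cite is circular in this setting, and the same objection applies to your alternative route through Remark \ref{AltTwistingDef} and Proposition 2.6 of \cite{Cap}, which also requires $\tau(b\tsr 1_A) = 1_A\tsr b$ before one can conclude $\tau$ is a twisting map.
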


\begin{proof} Let $a_1, a_2 \in A$ and $b \in B$. Then
\begin{align*}
\tau^A(a_1 a_2)(b) &= \tau^B(b)(a_1 a_2) \\
&= (\mu_A \tsr 1_B)(1_A \tsr \ev_{a_2}\tau^B)(\tau^B(b)(a_1)) \\
&= (\mu_A \tsr 1_B)(1_A \tsr \tau^A(a_2))(\tau^A(a_1)(b)) \\
&= [\tau^A(a_1) \star \tau^A(a_2)](b).
\end{align*}
Therefore $\tau^A(a_1 a_2) = \tau^A(a_1) \star \tau^A(a_2)$, as desired.
\end{proof}

\begin{cor} 
\label{algebra map determines twisting map}
Given an algebra homomorphism $\tau^B: B \to L(A, A \tsr B)$ which respects multiplication in $A$, define a map $\tau: B \tsr A \to A \tsr B$ via $\tau(b \tsr a) = \tau^B(b)(a)$, for  all $a \in A$ and $b \in B$. Then $\tau$ is a twisting map.

\end{cor}

\begin{proof} This follows immediately from Proposition 2.6 in \cite{Cap}.
\end{proof}

The reason for the Corollary is its usefulness for constructing twisting maps: writing down an algebra homomorphism $B \to L(A, A \tsr B)$ which respects multiplication in $A$ can be easier than verifying that some linear map $\tau: B \tsr A \to A \tsr B$ is a twisting map. We now illustrate this point.

Let $A_1$ and $B_1$ be finite-dimensional $\k$-vector spaces. Let $A = T(A_1)$ and $B= T(B_1)$ be the associated tensor algebras with the usual weight grading. Let $\{x_1, \ldots, x_l\}$ be a basis of $A_1$; let $ \{u_1, \ldots, u_m, d_1, \ldots, d_n\}$ be a basis of $B_1$. Let $B_1' = \sp\{u_1, \ldots, u_m\}$, and $B_1'' = \sp\{d_1, \ldots, d_n\}$. Suppose that $t: B_1 \tsr A_1 \to A \tsr B$ is a linear map such that $$t(B_1' \tsr A_1) \subseteq A_1 \tsr B_1' \oplus A_2 \tsr B_0$$and $$t(B_1'' \tsr A_1) \subseteq A_1 \tsr B_1'' \oplus A_0 \tsr\mu_B(B_1 \tsr B_1'').$$

%

\begin{thm}
\label{extendSepTau}
There exists a graded twisting map $\t: B \tsr A \to A \tsr B$ such that $\t|_{B_1 \tsr A_1} = t$.
\end{thm}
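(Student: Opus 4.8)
The plan is to invoke Corollary \ref{algebra map determines twisting map}: it suffices to produce an algebra homomorphism $\tau^B : B \to L(A, A \tsr B)$ that respects multiplication in $A$ and satisfies $\tau^B(b)(x) = t(b \tsr x)$ for all $b \in B_1$ and $x \in A_1$. Because $B = T(B_1)$ is free, such a homomorphism is uniquely determined by an arbitrary choice of linear maps $\tau^B(u_i), \tau^B(d_j) \in L(A, A \tsr B)$, extended to all of $B$ by the convolution product $\star$. Thus the whole construction reduces to specifying each $\tau^B(b)$, $b \in B_1$, as a linear map $A \to A \tsr B$ with the prescribed values on $A_1$, in such a way that the resulting homomorphism respects multiplication in $A$.

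First I would define the maps $\tau^B(b)$, $b \in B_1$, on all of $A$ by induction on the tensor degree in $A$, using the relation that respecting multiplication in $A$ forces: writing a monomial as $a = x a'$ with $x \in A_1$,
$$\tau^B(b)(x a') = (\mu_A \tsr 1_B)(1_A \tsr \ev_{a'} \tau^B)(t(b \tsr x)).$$
Since $A$ is free, monomials in the $x_i$ form a basis and this left-factorization is canonical, so no consistency condition is imposed at this stage; the content is that the right-hand side is already defined. Here the two separability containments enter decisively. The condition $t(B_1'' \tsr A_1) \subseteq A_1 \tsr B_1'' \oplus A_0 \tsr \mu_B(B_1 \tsr B_1'')$ guarantees that the $B$-components appearing in $t(d \tsr x)$ all have positive degree; one then checks, maintaining the assertion through the induction, that $\tau^B(b)(a)$ has no $A \tsr B_0$ component whenever $b$ involves at least one $B_1''$-generator. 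Consequently evaluating such a $\tau^B(b)$ never raises the $A$-degree, and the recursion for the $d$-generators closes on an induction on $A$-degree.

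The remaining $A \tsr B_0$ components, which are the ones that can raise the $A$-degree in a convolution, are governed entirely by the condition $t(B_1' \tsr A_1) \subseteq A_1 \tsr B_1' \oplus A_2 \tsr B_0$: they occur only for arguments $b$ lying in the subalgebra $B' = T(B_1')$, and for such $b$ one has $\tau^B(b)(A) \subseteq A \tsr B'$. The restriction to $B'$ is an iterated Ore-type twisting determined by the shift part in $A_1 \tsr B_1'$ and the skew-derivation part in $A_2 \tsr B_0$; because $A$ is free, this data extends with no compatibility obstruction (a skew-derivation of a free algebra is freely determined by its action on generators), so the $B'$-part is well-defined at every degree. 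I would therefore construct the $B'$-part first, and then build the full $\tau^B$ on top of it by adjoining the $B_1''$-generators, the induction on $A$-degree now closing because evaluating a $B_1''$-generator leaves the needed $B'$-values at $A$-degree within reach.

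Finally I would verify that the homomorphism $\tau^B$ so constructed genuinely respects multiplication in $A$ for all $b \in B$, not merely on generators; this follows from the generator case built into the recursion together with associativity of the $\star$-product and freeness of $A$, after which Corollary \ref{algebra map determines twisting map} yields the desired twisting map $\t$ with $\t|_{B_1 \tsr A_1} = t$. The main obstacle is the well-definedness and termination of the inductive construction: the extra components $A_2 \tsr B_0$ and $A_0 \tsr \mu_B(B_1 \tsr B_1'')$ do not lower the naive degree, so a single unguarded induction fails, and it is precisely the separability containments—one forcing positive $B$-degree, the other confining the degree-raising part to the free Ore subalgebra $T(B_1')$—together with the freeness of both $A$ and $B$ that remove every compatibility obstruction and make the recursion well-founded.
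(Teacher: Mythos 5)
Your proposal is correct and follows essentially the same route as the paper: both reduce the problem via Corollary \ref{algebra map determines twisting map} to constructing an algebra homomorphism $\tau^B\colon B\to L(A,A\tsr B)$ that respects multiplication in $A$, build it on the generators of $B_1'$ first and then of $B_1''$ by a recursion on $A$-degree whose well-foundedness is exactly what the two separability containments provide, extend to all of $B$ by freeness of $B$, and finally verify multiplicativity in $A$ by induction using associativity of $\mu_A$. The only differences are bookkeeping: you peel off the leftmost $A$-generator and control degrees with the ``no $A\tsr B_0$-component'' invariant (which, combined with gradedness, is what actually closes the recursion for the $B_1''$-generators), while the paper peels off the rightmost generator and instead tracks that the relevant $B$-components are monomials ending in $B_1''$-generators, defined first only on $A_{\le 1}$.
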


In the next section we will prove that such a $\t$ is unique.

\begin{proof}

By Corollary \ref{algebra map determines twisting map}, it suffices to construct an algebra homomorphism $\t^B : B \to L(A, A \tsr B)$ that respects multiplication in $A$. 

We start by defining a linear map $\t^B: B_1 \to L(A, A \tsr B)$. Fix a basis element $u_i \in B'_1$. In order to define $\t^B(u_i) \in L(A, A \tsr B)$ it suffices to define $\t^B(u_i)$ on the canonical monomial basis of $A$ coming from the algebra generating set $\{x_1, \ldots, x_l\}$. First set $\t^B(u_i)(1_A) = 1_A \tsr u_i$ and $\t^B(u_i)(x_j) = t(u_i \tsr x_j)$. Then extend this definition linearly to get a linear map $$\t^B: B_0 \oplus B'_1 \to L(A_{\leq 1}, A_{\leq 2} \tsr (B_0 \oplus B'_1)).$$ With this definition the right hand side of the following equation is well-defined. Set $$\t^B(u_i)(x_{j_1} x_{j_2}) = (\m_A \tsr 1_B)(1_A \tsr \ev_{x_{j_2}} \t^B)(\t^B(u_i)(x_{j_1})).$$ Then, for $k \geq 3$, define, inductively, $$\t^B(u_i)(x_{j_1} \cdots x_{j_k}) = (\m_A \tsr 1_B)(1_A \tsr \ev_{x_{j_k}} \t^B)(\t^B(u_i)(x_{j_1} \cdots x_{j_{k-1}})).$$ Thus we have defined a linear map $\t^B(u_i): A \to A \tsr B$. 

Next we fix a basis element $d_i \in B''_1$. Set $\t^B(d_i)(1_A) = 1_A \tsr d_i$ and $\t^B(d_i)(x_j) = t(d_i \tsr x_j)$. The last definition may be extended linearly to obtain a map $$\t^B: B''_1 \to L(A_{\leq 1}, A_{\leq 1} \tsr B);$$ then since, as noted above, $L(A_{\leq 1}, A_{\leq 1} \tsr B)$ is an associative unital algebra we obtain an algebra homomorphism $$\t^B: T(B''_1) \to L(A_{\leq 1}, A_{\leq 1} \tsr B).$$ Next define $$\t^B(u_k d_i)(x_j) = (1_A \tsr \m_B)(\t^B(u_k) \tsr 1_B)(\t^B(d_i)(x_j)).$$ Now we extend the definition of $\t^B(d_i)$ to the space $A_{\leq 2}$ by setting $$\t^B(d_i)(x_{j_1} x_{j_2}) = (\m_A \tsr 1_B)(\m_A \tsr \ev_{x_{j_2}} \t^B)(\t^B(d_i)(x_{j_1})).$$ Please note that the right hand side of the last equation makes sense as we have defined $\t^B: \m_B(B_1 \tsr B_1) \to L(A_{\geq 1}, A_{\geq 3} \tsr B)$. Suppose $k \geq 3$. Let us assume inductively that for any monomial $w \in T(B_1)_{k-1}$ that $\t^B(w): A_{\leq 1} \to A_{\leq k} \tsr B$ has been defined by multiplicative extension using the fact that $B$ is a free algebra and the algebra structure of $L(A_{\leq k}, A_{\leq k} \tsr B)$. (There is a natural inclusion $L(A_{\leq 1}, A_{\leq k} \tsr B) \to L(A_{\leq k}, A_{\leq k} \tsr B)$ given by extension by $0$.) Then define the map $\t^B(wd_j)$ on the space $A_{\leq 1}$ by  $$\t^B(w d_j) = (1_A \tsr \m_B)(\t^B(w) \tsr 1_B)\t^B(w).$$

Assume that $\tau^B(d_i)(x_{j_1} \cdots x_{j_{k-1}})$ has been defined, then set $$\tau^B(d_i)(x_{j_1} \cdots x_{j_k}) = (\mu_A \tsr 1_B)(1_A \tsr \ev_{x_{j_k}}\tau^B)(\tau^B(d_i)(x_{j_1} \cdots x_{j_{k-1}})).$$ To be clear about why the right hand side makes sense write $$\tau^B(d_i)(x_{j_1} \cdots x_{j_{k-1}}) = (x_{j_1} \cdots x_{j_{k-1}})_{\tau} \tsr (d_i)_{\tau},$$ where $(d_i)_{\tau}$ is a sum of monomials in $T(B_1)$ of degree $\leq k$ and each such monomial ends in an element of $\{d_1, \ldots, d_n\}$. By construction, $\tau^B((d_i)_{\tau})(x_{j_k})$ makes sense. Therefore we have defined $\tau^B(d_i) \in L(A, A \tsr B)$.

We have defined a linear map $\t^B: B_1 \to L(A, A \tsr B)$, so the universal property of the tensor algebra affords an algebra homomorphism $$\t^B: B \to L(A, A \tsr B).$$ Note that the preliminary definitions, for example $\t^B(d_i d_j)$ or $\t^B(u_i d_j)$ as linear maps on $A_{\leq 1}$, are compatible with this definition.

Now we check that $\t^B$ respects multiplication in $A$. Notice that for any element $b_1 \in B_1$ we have defined $$\t^B(b_1)(x_{j_1} \cdots x_{j_k}) = (\m_A \tsr 1_B)(1_A \tsr \ev_{x_{j_k}}\t^B)(\t^B(b_1)(x_{j_1} \cdots x_{j_{k-1}})).$$ It follows easily from the definitions that in fact, for all $b \in B$, $$\t^B(b)(x_{j_1} \cdots x_{j_k}) = (\m_A \tsr 1_B)(1_A \tsr \ev_{x_{j_k}}\t^B)(\t^B(b)(x_{j_1} \cdots x_{j_{k-1}})).$$

Let $b \in B$ be arbitrary. We claim that for all integers $l \geq 2$ and $1 \leq k \leq l$,
$$\tau^B(b)(x_{n_1} \cdots x_{n_k} x_{n_{k+1}} \cdots x_{n_l}) = (\mu_A \tsr 1_B)(1_A \tsr \ev_{x_{n_{k+1}} \cdots x_{n_l}} \tau^B)(\ev_{x_{n_1} \cdots x_{n_k}}\tau^B(b)).$$

As observed above, the case $l = 2$ is true. Suppose that $l \geq 3$ and that the equation holds for all $1 \leq j < l$ and all $1 \leq  k  \leq  l-1$.

On the left hand side of the claimed equation:
\begin{align*}
&\tau^B(b)(x_{n_1} \cdots x_{n_k} x_{n_{k+1}} \cdots x_{n_l})\\ &= (\mu_A \tsr 1_B)(1_A \tsr \ev_{x_{n_l}} \tau^B)(\tau^B(b)(x_{n_1} \cdots x_{n_k} x_{n_{k+1}} \cdots x_{n_{l-1}})) \\
&= (\mu_A \tsr 1_B)(1_A \tsr \ev_{x_{n_l}} \tau^B)(\mu_A \tsr 1_B)(1_A \tsr \ev_{x_{n_{k+1}} \cdots x_{n_{l-1}}} \tau^B)(\ev_{x_{n_1} \cdots x_{n_k}} \tau^B(b)) \\
&=  (\mu_A \tsr 1_B)(\mu_A \tsr 1_A \tsr 1_B)(1_A \tsr 1_A \tsr \ev_{x_{n_l}}\tau^B)(1_A \tsr \ev_{x_{n_{k+1}} \cdots x_{n_{l-1}}} \tau^B)(\ev_{x_{n_1} \cdots x_{n_k}} \tau^B(b)) \\
&= (\mu_A(\mu_A \tsr 1_A) \tsr 1_B)(1_A \tsr 1_A \tsr \ev_{x_{n_l}}\tau^B)(1_A \tsr \ev_{x_{n_{k+1}} \cdots x_{n_{l-1}}} \tau^B)(\ev_{x_{n_1} \cdots x_{n_k}} \tau^B(b)).
\end{align*} 
On the right hand side of the claimed equation:
\begin{align*}
&(\mu_A \tsr 1_B)(1_A \tsr \ev_{x_{n_{k+1}} \cdots x_{n_l}} \tau^B)(\ev_{x_{n_1} \cdots x_{n_k}}\tau^B(b)) \\ &= (\mu_A \tsr 1_B)(1_A \tsr (\mu_A \tsr 1_B)(1_A \tsr \ev_{x_{n_l}}\tau^B)(\ev_{x_{n_{k+1}} \cdots x_{n_{l-1}}}\tau^B))(\ev_{x_{n_1} \cdots x_{n_k}} \tau^B(b)) \\
&= (\mu_A \tsr 1_B)(1_A \tsr (\mu_A \tsr 1_B)(1_A \tsr \ev_{x_{n_l}}\tau^B))(1_A \tsr \ev_{x_{n_{k+1}} \cdots x_{n_{l-1}}}\tau^B)(\ev_{x_{n_1} \cdots x_{n_k}} \tau^B(b))\\
&= (\mu_A \tsr 1_B)(1_A \tsr \mu_A \tsr 1_B)(1_A \tsr 1_A \tsr \ev_{x_{n_l}}\tau^B)(1_A \tsr \ev_{x_{n_{k+1}} \cdots x_{n_{l-1}}}\tau^B)(\ev_{x_{n_1} \cdots x_{n_k}} \tau^B(b)) \\
&= (\mu_A(1_A \tsr \mu_A) \tsr 1_B)(1_A \tsr 1_A \tsr \ev_{x_{n_l}}\tau^B)(1_A \tsr \ev_{x_{n_{k+1}} \cdots x_{n_{l-1}}}\tau^B)(\ev_{x_{n_1} \cdots x_{n_k}} \tau^B(b)). \\
\end{align*}
Since $\mu_A$ is associative, the claim follows.

We conclude that $\t^B$ respects multiplication in $A$. Hence there exists a twisting map $\t: B \tsr A \to A \tsr B$ which extends $t$.

%
%
%

\end{proof}

We conclude with conditions on a graded twisting map that imply the map induces a graded twisting map on graded quotient algebras. The non-graded version appears in \cite[p. 13]{BorowiecMarcinek}.


Let $A$ and $B$ be graded $\k$-algebras. Let $I$ and $J$ be homogeneous ideals in $A$ and $B$ respectively, and let $A'=A/I$ and $B'=B/J$. Let $\pi_A:A\to A'$ and $\pi_B:B\to B'$ denote the canonical projections, which are graded $\k$-algebra homomorphisms. If $X\in \{A,B, A', B'\}$ we denote the multiplication map on $X$ by $\mu_X: X\tensor X\rightarrow X$. Choose graded $\k$-linear splittings $\eta_A:A'\to A$ and $\eta_B:B'\to B$ such that $\pi_A \eta_A = 1_A$ and $\pi_B \eta_B = 1_B$. Assume that $\tau:B\tensor A\rightarrow A\tensor B$ is a graded twisting map such that 
$$\tau( B \tensor I +J \tensor A ) \subset I \tensor B + A \tensor J$$


\begin{thm}
\label{inducedTau}
Make all of the assumptions of the last paragraph. Then the linear map $\tau':B'\tensor A'\rightarrow A'\tensor B'$ given by 
$$\tau' = (\pi_A\tensor \pi_B)\circ \tau\circ (\eta_B\tensor \eta_A)$$
is also a graded twisting map. Moreover, $\pi_A\tensor \pi_B: A\tensor_{\tau} B \to A'\tensor_{\tau'} B'$ is a graded algebra homomorphism.
\end{thm}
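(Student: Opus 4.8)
The plan is to reduce both assertions to a single \emph{intertwining identity} relating $\tau$, $\tau'$, and the projections, and then harvest each conclusion from it. Write $\pi=\pi_A\tsr\pi_B$, and introduce the graded idempotents $p_A=\eta_A\pi_A\colon A\to A$ and $p_B=\eta_B\pi_B\colon B\to B$. Since $\eta_A,\eta_B$ are injective and $\pi_A\eta_A=1_{A'}$, $\pi_B\eta_B=1_{B'}$, one checks at once that $p_A^2=p_A$, $p_B^2=p_B$, that $1_A-p_A$ is the projection whose image is exactly $\ker\pi_A=I$, and likewise that $1_B-p_B$ has image $J$. The identity I want, which I will call $(\ast)$, is
\begin{equation*}
(\pi_A\tsr\pi_B)\,\tau \;=\; \tau'\,(\pi_B\tsr\pi_A)\colon\ B\tsr A\to A'\tsr B'.
\end{equation*}
Everything flows from $(\ast)$: it is simultaneously the algebraic content of the twisting-map axioms for $\tau'$ and of the homomorphism property of $\pi$.

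The main step — and the place where the hypothesis is indispensable — is to prove $(\ast)$. Unwinding the definition $\tau'=(\pi_A\tsr\pi_B)\tau(\eta_B\tsr\eta_A)$ gives $\tau'(\pi_B\tsr\pi_A)=(\pi_A\tsr\pi_B)\tau(p_B\tsr p_A)$, so $(\ast)$ is equivalent to $(\pi_A\tsr\pi_B)\,\tau\,(1_{B\tsr A}-p_B\tsr p_A)=0$. Now
\begin{equation*}
1_{B\tsr A}-p_B\tsr p_A=(1_B-p_B)\tsr 1_A+p_B\tsr(1_A-p_A),
\end{equation*}
whose image lies in $J\tsr A+B\tsr I$ by the description of $1_B-p_B$ and $1_A-p_A$ above. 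The hypothesis $\tau(B\tsr I+J\tsr A)\subseteq I\tsr B+A\tsr J$ then carries this into $I\tsr B+A\tsr J$, which $\pi_A\tsr\pi_B$ annihilates because $\pi_A(I)=0$ and $\pi_B(J)=0$. This proves $(\ast)$. I expect this to be the only genuinely nontrivial step, precisely because the splittings $\eta_A,\eta_B$ are merely graded linear and not algebra maps; the containment hypothesis is exactly what repairs their failure to be multiplicative.

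It remains to extract the two conclusions. Expanding $\pi\circ\mu_\tau$ and $\mu_{\tau'}\circ(\pi\tsr\pi)$ and pushing $\pi_A,\pi_B$ through the multiplications $\mu_A,\mu_B$ (which is legitimate since $\pi_A,\pi_B$ are algebra maps) rewrites both as $(\mu_{A'}\tsr\mu_{B'})$ applied to $\pi_A\tsr(\text{middle map})\tsr\pi_B$, where the middle maps are $(\pi_A\tsr\pi_B)\tau$ and $\tau'(\pi_B\tsr\pi_A)$ respectively; by $(\ast)$ they coincide, so $\pi\circ\mu_\tau=\mu_{\tau'}\circ(\pi\tsr\pi)$. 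Since $\pi$ is graded and surjective with $\pi(1_A\tsr 1_B)=1_{A'}\tsr 1_{B'}$, I then transport the unital associative structure of $A\tsr_\tau B$ across $\pi$: evaluating both associators of $\mu_{\tau'}$ on triples $\pi(z_1)\tsr\pi(z_2)\tsr\pi(z_3)$ and using this intertwining relation repeatedly reduces them to $\pi$ applied to the two associators of $\mu_\tau$, which agree by Proposition \ref{multiplications}(1); surjectivity of $\pi\tsr\pi\tsr\pi$ yields associativity of $\mu_{\tau'}$, and the same argument shows $1_{A'}\tsr 1_{B'}$ is a two-sided unit. As $\mu_{\tau'}=(\mu_{A'}\tsr\mu_{B'})(1_{A'}\tsr\tau'\tsr 1_{B'})$ by construction and all maps in sight are graded, the graded case of Proposition \ref{multiplications}(1) now forces $\tau'$ to be a graded twisting map. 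Finally, the relation $\pi\circ\mu_\tau=\mu_{\tau'}\circ(\pi\tsr\pi)$ is precisely the statement that $\pi=\pi_A\tsr\pi_B\colon A\tsr_\tau B\to A'\tsr_{\tau'}B'$ is a graded algebra homomorphism, completing the proof.
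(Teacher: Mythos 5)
Your proof is correct. There is, however, nothing in the paper to compare it against line by line: the paper omits the proof of Theorem \ref{inducedTau} entirely and refers to the ungraded version in \cite{BorowiecMarcinek}, so your argument should be judged as a self-contained graded proof, and as such it succeeds. Its organizing step---the intertwining identity $(\ast)\colon (\pi_A\tsr\pi_B)\tau=\tau'(\pi_B\tsr\pi_A)$, proved via the idempotents $p_A=\eta_A\pi_A$, $p_B=\eta_B\pi_B$ and the decomposition $1_{B\tsr A}-p_B\tsr p_A=(1_B-p_B)\tsr 1_A+p_B\tsr(1_A-p_A)$---isolates exactly where the hypothesis $\tau(B\tsr I+J\tsr A)\subseteq I\tsr B+A\tsr J$ is used, and then both conclusions (that $\tau'$ is a graded twisting map, via the converse direction of Proposition \ref{multiplications}(1) applied to $\mu_{\tau'}$, and that $\pi_A\tsr\pi_B$ is a graded algebra homomorphism) follow from the single relation $\pi\mu_\tau=\mu_{\tau'}(\pi\tsr\pi)$ together with surjectivity of $\pi$. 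Two further observations. First, since $\pi_B\tsr\pi_A$ is surjective, the identity $(\ast)$ determines $\tau'$ uniquely, so your argument also delivers, at no extra cost, the paper's remark that $\tau'$ is independent of the choice of splittings $\eta_A,\eta_B$. Second, your invocation of Proposition \ref{multiplications}(1) in the converse direction is legitimate only because you verify unitality as well as associativity of $\mu_{\tau'}$ (associativity alone would not force the unital twisting conditions on $\tau'$; e.g.\ $\tau'=0$ gives an associative but non-unital product); you do supply the unit check, so there is no gap.
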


We omit the proof and refer the reader to \cite[Theorem, p. 13]{BorowiecMarcinek}. We also remark that it is easy to check that the map $\t'$ does not depend on the choice of the splitting maps $\eta_A$ and $\eta_B$.

\section{Uniqueness of extensions and quadratic relations}
\label{quadratic}

Koszul algebras must be quadratic, so we begin by characterizing quadratic twisted tensor products of quadratic algebras. We show that a twisted tensor product is quadratic if and only if the corresponding twisting map is determined by its lowest-degree component. Initially, it is useful to consider two different notions of ``low-degree determination.'' 

Throughout this section, let $A$ and $B$ be connected, graded, unital $\k$-algebras. Let $\t:B\tsr A\to A\tsr B$ be a graded twisting map. 

\begin{defn}
\label{one-determined definition}
The graded twisting map $\t$ is \emph{one-determined} if for any graded twisting map $\t':B\tsr A\to A\tsr B$ such that $\t_2=\t'_2$ we have $\t=\t'$.
\end{defn}

The unital condition on graded twisting maps implies that degree 2 is the lowest degree in which a difference could occur between $\t$ and $\t'$. 


\begin{defn}
\label{UEP definition}
The graded twisting map $\t$ has \emph{the unique extension property to degree $n$} if, for any linear map  $\t':B\tsr A\to A\tsr B$ that is graded twisting to degree $n$ such that $\t_{i}=\t'_{i}$ for all $i< n$, we have $\t_{n}=\t'_{n}$.

The graded twisting map $\t$ has the \emph{unique extension property} if $\t$ has the unique extension property to degree $n$ for all $n\ge 3$.
\end{defn}

It is obvious that a graded twisting map $\t$ with the unique extension property is one-determined. 

Example \ref{1detNotUEP} shows that the converse is false, though we do not know of a counterexample where both $A$ and $B$ are quadratic. 

When $\t$ is a graded twisting map such that the ideal $I_{\t}$ is generated by (homogeneous) quadratic elements, we say that $A\tsr_{\t} B$ is \emph{$\t$-quadratic}. (The definition of $I_{\t}$ occurs immediately preceding Proposition \ref{presentation}.) Now we characterize graded twisting maps which have the unique extension property as follows. 



\begin{thm}
\label{UEPquadratic}
A graded twisting map $\t:B\tsr A\to A\tsr B$ has the unique extension property if and only if $A\tsr_{\t} B$ is $\t$-quadratic.
\end{thm}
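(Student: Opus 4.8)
The plan is to prove both implications by relating the unique extension property to the structure of $I_{\t}$, using the twisting map extension problem (Lemma~\ref{TMEP}) as the central engine. Recall from Lemma~\ref{TMEP} that given a linear map $t$ that is graded twisting to degree $n$, an extension to degree $n+1$ that is graded twisting exists (and since $\t$ itself is a twisting map, such an extension always exists here), and it is \emph{unique} precisely when $\d_{n+1}$ is surjective. So the unique extension property is equivalent to the statement that $\d_{n+1}$ is surjective for all $n\ge 2$. The strategy is therefore to identify the cokernel of $\d_{n+1}$ with the space of minimal generators of $I_{\t}$ in degree $n+1$, so that surjectivity of all $\d_{n+1}$ (for $n+1 \ge 3$) corresponds exactly to $I_{\t}$ having no minimal generators above degree $2$, i.e.\ to $A\tsr_{\t} B$ being $\t$-quadratic.

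First I would set up the bookkeeping using the presentation from Proposition~\ref{presentation}: $A\tsr_{\t} B \cong (A\ast B)/I_{\t}$, and more usefully, the truncated comparison $\varphi:(A\ast B)/I_{\t}^n \to (A\tsr B)/(A\tsr B)_{>n}$ which is an isomorphism in degrees $\le n$. The $\t$-relations $b\tsr a - \t(b\tsr a)$ of degree $n+1$ in $A\ast B$ are exactly the new generators appearing at level $n+1$; the key observation is that the failure of $\d_{n+1}$ to be surjective measures precisely the freedom in choosing $\t_{n+1}$, and by the presentation this freedom corresponds to whether the degree-$(n+1)$ $\t$-relations are already consequences of lower-degree ones in the ideal $I_{\t}^n$. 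Concretely, I would show that $I_{\t}^n = I_{\t}^{n+1}$ in the relevant graded component if and only if the degree-$(n+1)$ relations lie in the ideal generated by the lower ones, and connect this to the image of $\d_{n+1}$: a complement $W$ to $\im\,\d_{n+1}$ (as constructed in the non-uniqueness half of Lemma~\ref{TMEP}) is exactly the space on which $\t_{n+1}$ can be altered, and such an alteration produces a genuinely different twisting map iff it changes the ideal, i.e.\ iff there is a minimal generator of $I_{\t}$ in degree $n+1$.

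For the forward direction (UEP $\implies$ $\t$-quadratic), I would argue contrapositively: if $I_{\t}$ has a minimal generator in some degree $n+1 \ge 3$, then the degree-$(n+1)$ $\t$-relation is not a consequence of lower-degree relations, which forces $\d_{n+1}$ to be non-surjective, and then the explicit construction at the end of the proof of Lemma~\ref{TMEP} produces a second twisting map $\t'$ agreeing with $\t$ below degree $n+1$ but differing in degree $n+1$, contradicting UEP. For the reverse direction ($\t$-quadratic $\implies$ UEP), I would assume $I_{\t}$ is generated in degree $2$ and show that any linear map $\t'$ that is graded twisting to degree $n$ and agrees with $\t$ below degree $n$ must agree with $\t$ in degree $n$ as well: since all relations of $\t'$ up to degree $n-1$ coincide with those of $\t$, and $I_{\t}$ is generated by these degree-$2$ relations, the degree-$n$ component of $A\tsr_{\t'}B$ is forced to equal that of $A\tsr_{\t}B$, which via the presentation isomorphism of Proposition~\ref{presentation} pins down $\t'_n = \t_n$.

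The main obstacle I anticipate is making the identification between $\operatorname{coker}\d_{n+1}$ and the minimal generators of $I_{\t}$ in degree $n+1$ fully rigorous, because $\d_{n+1}$ is defined on a direct sum $(B_+\tsr A_+\tsr A_+)\oplus(B_+\tsr B_+\tsr A_+)$ built from the two one-sided associativity conditions of Remark~\ref{AltTwistingDef}, whereas the ideal $I_{\t}$ lives inside the free product $A\ast B$; matching the ``overlap ambiguities'' of the rewriting system given by the $\t$-relations with the image of $\d_{n+1}$ requires care. The cleanest route is probably to avoid an explicit cokernel computation and instead argue entirely through the dimension-counting / presentation machinery of Proposition~\ref{presentation}, treating $\d_{n+1}$ only as the uniqueness criterion supplied by Lemma~\ref{TMEP} and using the ideal-theoretic characterization of $\t$-quadraticity as the bridge between the two halves of the equivalence.
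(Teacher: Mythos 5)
Your overall architecture is sound, and half of it matches the paper: by Lemma \ref{TMEP}, since $\tau$ is an honest twisting map (so extensions always exist), the unique extension property is indeed equivalent to surjectivity of $\delta_n$ for all $n\ge 3$; and your reverse direction ($\tau$-quadratic $\Rightarrow$ UEP) is essentially the paper's argument --- from $\tau'_2=\tau_2$ one gets $I_\tau = I_\tau^2 = I_{\tau'}^2 \subseteq I_{\tau'}^n$, and then the degree-$n$ isomorphism of Proposition \ref{presentation} forces $\tau(\gamma)-\tau'(\gamma)=0$. (One small repair there: ``the degree-$n$ component of $A\otimes_{\tau'}B$'' is not defined, since $\tau'$ is only twisting to degree $n$; you must phrase this through the truncated algebra of Proposition \ref{multiplications}(2), exactly as Proposition \ref{presentation} does.)

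The forward direction, however, has a genuine gap. Everything rests on your claim that a minimal generator of $I_\tau$ in degree $n+1$ ``forces $\delta_{n+1}$ to be non-surjective'' --- equivalently, that surjectivity of $\delta_{n+1}$ forces every degree-$(n+1)$ $\tau$-relation into $I_\tau^n$. This is precisely the computational heart of the theorem, and it is never proved; you flag it yourself as the main obstacle and propose to avoid it via ``dimension-counting / presentation machinery,'' but that machinery cannot supply it as stated: Proposition \ref{presentation} controls $(A\ast B)/I_\tau^n$ only in degrees $\le n$, whereas a minimal generator in degree $n+1$ lives exactly in the degree-$(n+1)$ component of that algebra, about which the proposition says nothing. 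The paper closes this hole with an explicit free-product calculation: surjectivity of $\delta_{n+1}$ produces $y$ with $\delta(y)=b\otimes a$, the identity $f\delta_{n+1}=R_{n+1}$ (with $f=\tau_{n+1}$) gives $\tau(b\otimes a)=R(y)$, and then $r=\delta(y)-R(y)$ is expanded in $A\ast B$ and corrected by elements of $I_\tau^2$ until it is visibly a combination of $\tau$-relations of degree $\le n$, which lie in $I_\tau^2$ by the induction hypothesis. Some argument of this kind is unavoidable. For instance, one can instead show that extensions of $\tau_{\le n}$ to maps twisting to degree $n+1$ biject with linear retractions of $\bigl((A\ast B)/I_\tau^n\bigr)_{n+1}$ onto $(A\otimes B)_{n+1}$ fixing all products $\bar a\bar b$, so that uniqueness forces those products to span, i.e.\ $(I_\tau^n)_{n+1}=(I_\tau)_{n+1}$; but verifying that such a retraction really yields a map twisting to degree $n+1$ requires a computation of the same nature as the paper's. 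As written, your proposal correctly identifies the statement that must be proved but does not prove it.
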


\begin{proof}

Suppose $\t$ has the unique extension property. Assume inductively that $I^{n}_{\t}=I^2_{\t}$. 
Let $b\tsr a\in (B\tsr A)_{n+1}$ where $a\in A_+$ and $b\in B_+$. Let $r=b\tsr a-\t(b\tsr a)\in A\ast B$. By Lemma \ref{TMEP}, there exists $y\in (B\tsr A\tsr A)\oplus (B\tsr B\tsr A)$ such that $\d(y)=b\tsr a$ and $R(y) = \t(b\tsr a)$. We can assume $y=\sum b_i\tsr a_i\tsr a'_i + \sum b'_j\tsr b''_j\tsr a''_j$ where
$a_i, a'_i, a''_j\in A_+$ and $b_i, b'_j, b''_j\in B_+$.  

Letting $\t'= \t-i_A\t^A-i_B\t^B$, we perform the following calculation in the free product $A\ast B$.
\begin{align*}
r&=b\tsr a- R(y) = \d(y)-R(y)\\
&=\sum b_i\tsr a_ia'_i - \t^B(b_i\tsr a_i)\tsr a'_i - R(b_i\tsr a_i\tsr a'_i) \\
&\qquad +\sum b'_jb''_j\tsr a''_j - b'_j\tsr \t^A(b''_j\tsr a''_j) - R(b'_j\tsr b''_j\tsr a''_j)\\
&= \sum \left(b_i\tsr a_i - \t^B(b_i\tsr a_i) - \t^A(b_i\tsr a_i)\right)\tsr a'_i - (\mu_A\tsr 1)(1_A\tsr \t)(\t'(b_i\tsr a_i)\tsr a'_i)\\
&\qquad +  \sum b'_j\tsr\left(b''_j\tsr a''_j - \t^A(b''_j\tsr a''_j) - \t^B(b''_j\tsr a''_j)\right) - (1_A\tsr \mu_B)(\t\tsr 1_B)(b'_j\tsr \t'(b''_j\tsr a''_j))
\end{align*}

Now, $$\t'(b_i\tsr a_i)\tsr a'_i - (1_A\tsr \t)(\t'(b_i\tsr a_i)\tsr a'_i)$$ and 
$$b'_j\tsr \t'(b''_j\tsr a''_j) - (\t\tsr 1_B)(b'_j\tsr \t'(b''_j\tsr a''_j))$$ are both elements of $I^n_{\t}=I^2_{\t}$. Adding these relations to $r$, we see that 
$$r- \sum (b_i\tsr a_i-\t(b_i\tsr a_i))\tsr a'_i - \sum b'_j\tsr (b''_j\tsr a''_j-\t(b''_j\tsr a''_j))\in I_{\t}^2.$$
Since none of $a_i, a'_i, a''_j$ or $b_i, b'_j, b''_j$ are units, $b_i\tsr a_i-\t(b_i\tsr a_i), b''_j\tsr a''_j-\t(b''_j\tsr a''_j) \in I^n_{\t}=I^2_{\t}$, hence $r\in I^2_{\t}$. We conclude that $A \tsr_{t} B$ is $\t$-quadratic.

Conversely, let $\t:B\tsr A\to A\tsr B$ be a graded twisting map and assume $I_{\t}$ is generated as an ideal of $A\ast B$ by $(I_{\t})_2$. We must prove that for any $n\ge 3$ and any linear map $\t':B\tsr A\to A\tsr B$ that is graded twisting to degree $n$ and that satisfies $\t'_i=\t_i$ for every $i< n$, we have $\t'_{n}=\t_{n}$. 

Let $I_{\t}^n$ and $I_{\t'}^n$ be the ideals generated by all $\t$-relations (resp. $\t'$-relations) of degree at most $n$. Note that $I_{\t}^n=I_{\t}^2=I_{\t}$ since $A\tsr_{\t} B$ is $\tau$-quadratic. Since $\t'_2=\t_2$, we have $I_{\t}^2=I_{\t'}^2$ and hence $I_{\t}= I_{\t'}^2\subseteq I_{\t'}^n$. 

Let $\gamma\in (B\tsr A)_{n}$ be arbitrary. Let $w=\gamma-\t(\gamma)$ and $w'=\gamma-\t'(\gamma)$. Then $w\in I_{\t}\subseteq I_{\t'}^n$. By definition, $w'\in I_{\t'}^n$. Thus $\t(\gamma)-\t'(\gamma)=w'-w\in I_{\t'}^n$. By Proposition \ref{presentation}, the canonical map $(A\ast B)/I_{\t'}^n \to (A\tsr B)/(A\tsr B)_{>n}$ is an isomorphism in degree $n$ since $\t'$ is graded twisting to degree $n$. Since $w'-w\in (A\tsr B)_{n}$, it follows that $w'=w$ and hence $\t_{n}=\t'_{n}$. Thus $\t$ has the unique extension property.

\end{proof}

\begin{cor}
\label{quadTTP}
Let $\t:B\tsr A\to A\tsr B$ be a graded twisting map with the unique extension property. If $A$ and $B$ are quadratic $\k$-algebras, then $A\tsr_{\t} B$ is quadratic.
\end{cor}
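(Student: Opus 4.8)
The plan is to combine Theorem \ref{UEPquadratic} with the presentation of $A\tsr_{\t} B$ from Proposition \ref{presentation}. By Theorem \ref{UEPquadratic}, the hypothesis that $\t$ has the unique extension property is equivalent to $A\tsr_{\t} B$ being $\t$-quadratic, meaning that the ideal $I_{\t}\subseteq A\ast B$ is generated by its degree-two component $(I_{\t})_2$. By Proposition \ref{presentation}, we have an isomorphism of graded algebras $A\tsr_{\t} B\cong (A\ast B)/I_{\t}$. The goal is therefore to show that $(A\ast B)/I_{\t}$ is quadratic in the sense of Section 1, i.e. that it is one-generated and isomorphic to its own quadratic part.

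First I would verify that $A\tsr_{\t} B$ is one-generated. Since $A=T(A_1)/\la \text{relations}\ra$ and $B=T(B_1)/\la\text{relations}\ra$ are quadratic, they are one-generated, so $A\ast B$ is generated in degree $1$ by $A_1\oplus B_1$; passing to the quotient $(A\ast B)/I_{\t}$ preserves this, so $A\tsr_{\t} B$ is one-generated, with $(A\tsr_{\t} B)_1\cong A_1\oplus B_1$. Next I would assemble the degree-two relations. Under the identification $A\tsr_{\t} B\cong (A\ast B)/I_{\t}$, the defining relations come from three sources: the quadratic relations $I_A$ of $A$ (sitting inside $T(A_1)$), the quadratic relations $J_B$ of $B$ (sitting inside $T(B_1)$), and the $\t$-relations $b\tsr a-\t(b\tsr a)$ for $a\in A_1$, $b\in B_1$, which live in degree $2$ of $A\ast B$ precisely because $\t$ is graded and $\t(B_1\tsr A_1)\subseteq (A\tsr B)_2$. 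The key point is that $\t$-quadraticity tells us $I_{\t}$ is generated by $(I_{\t})_2$, and since $A$ and $B$ are quadratic their contributions to the kernel are also quadratic; hence the full kernel of $T(A_1\oplus B_1)\to A\tsr_{\t} B$ is generated in degree $2$.

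Concretely, I would let $\pi:T(A_1\oplus B_1)\to A\tsr_{\t} B$ be the canonical surjection and show $\ker\pi=\la(\ker\pi)_2\ra$. The inclusion $\la(\ker\pi)_2\ra\subseteq\ker\pi$ is automatic. For the reverse inclusion, I would use that any element of $\ker\pi$ can be rewritten, modulo the quadratic $\t$-relations (which let us move $B_1$ past $A_1$), into the normal form $A\ast B=\bigoplus A_+^{\e_1}\tsr(B_+\tsr A_+)^{\tsr i}\tsr B_+^{\e_2}$ described before Proposition \ref{presentation}; the $\t$-quadraticity of $I_{\t}$ guarantees these rewriting relations are all consequences of degree-two ones, while quadraticity of $A$ and $B$ handles the internal relations within the $A$ and $B$ factors. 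Thus $q(A\tsr_{\t} B)\cong A\tsr_{\t} B$ and the algebra is quadratic.

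The main obstacle is bookkeeping the interaction of the three relation sources: one must confirm that reducing an arbitrary degree-$n$ element of the kernel to quadratic generators does not require any genuinely higher-degree relation. This is exactly where $\t$-quadraticity (equivalently, the unique extension property) is essential, since a priori the $\t$-relations could force new relations in higher degree, as Example \ref{non-Koszul example} shows can happen without the hypothesis. Once Theorem \ref{UEPquadratic} is invoked to control $I_{\t}$, the remaining work is the routine verification that the quadratic relations of $A$, of $B$, and the degree-two $\t$-relations together generate the whole relation ideal, which follows from the graded isomorphism in Proposition \ref{presentation} and the quadraticity of the factors.
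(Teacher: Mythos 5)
Your proposal is correct and takes essentially the same route as the paper, which leaves the corollary unproved as an immediate consequence: Theorem \ref{UEPquadratic} gives that $I_{\t}$ is generated by $(I_{\t})_2$, and combining this with the presentation $A\tsr_{\t} B\cong (A\ast B)/I_{\t}$ from Proposition \ref{presentation} and the quadraticity of $A$ and $B$ shows the full relation ideal in $T(A_1\oplus B_1)$ is generated by the quadratic relations of $A$, of $B$, and the degree-two $\t$-relations. Your bookkeeping of the three relation sources is exactly the identification the paper later writes out explicitly (as a chain of isomorphisms) in the proof of the converse proposition following the corollary.
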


Example \ref{Ugh3} shows that the converse of Corollary \ref{quadTTP} is false even under the stronger assumption that $A\tsr_{\t} B$ is a Koszul algebra. However we do have the following positive result.

\begin{prop}
Let $A$ and $B$ be quadratic $\k$-algebras and let $\t:B\tsr A\to A\tsr B$ be a graded twisting map such that $A\tsr_{\t} B$ is a quadratic algebra. Then $\t$ has the unique extension property.
\end{prop}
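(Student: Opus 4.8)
The plan is to deduce the result from Theorem \ref{UEPquadratic}, which identifies the unique extension property with the purely ring-theoretic condition that $A\tsr_{\t} B$ be $\t$-quadratic, i.e.\ that the defining ideal $I_{\t}\subseteq A\ast B$ satisfy $I_{\t}=I_{\t}^2$. One always has $I_{\t}^2\subseteq I_{\t}$, so the content is the reverse inclusion. Thus it suffices to show: if $A$ and $B$ are quadratic and $C:=A\tsr_{\t} B$ is quadratic, then $I_{\t}=I_{\t}^2$.

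First I would set up the free-product picture. Write $V=A_1\oplus B_1$ and let $\pi:T(V)\to C$ be the canonical multiplication map, which is surjective because $C$ is one-generated (it is generated by $i_A(A)$ and $i_B(B)$, hence by $A_1\oplus B_1$, since $A$ and $B$ are one-generated). As $A$ and $B$ are quadratic, the free product is quadratic: $A\ast B=T(V)/\la I_2^A\oplus I_2^B\ra$, where $I_2^A=(\ker(T(A_1)\to A))_2$ and similarly for $B$. By Proposition \ref{presentation}, $C=(A\ast B)/I_{\t}$, so $C=T(V)/\la I_2^A\oplus I_2^B,\ \t\text{-relations}\ra$, while $(A\ast B)/I_{\t}^2=T(V)/\la I_2^A\oplus I_2^B,\ \t\text{-relations of degree }2\ra$.

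The key step is to identify $(A\ast B)/I_{\t}^2$ with the quadratic part $q(C)$. By the unital conditions, the only nontrivial degree-$2$ $\t$-relations are $b\tsr a-\t(b\tsr a)$ with $a\in A_1,\ b\in B_1$; these, together with $I_2^A$ and $I_2^B$, span a subspace $K_0\subseteq T(V)_2$ with $\dim K_0=\dim I_2^A+\dim I_2^B+\dim A_1\dim B_1$. Using the Künneth identification $\dim C_2=\dim A_2+\dim A_1\dim B_1+\dim B_2$ together with $\dim I_2^A=(\dim A_1)^2-\dim A_2$ and its analogue for $B$, a direct count gives $\dim K_0=\dim T(V)_2-\dim C_2=\dim(\ker\pi)_2$. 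Since $K_0\subseteq(\ker\pi)_2$ is clear, we conclude $K_0=(\ker\pi)_2$, whence $q(C)=T(V)/\la K_0\ra=(A\ast B)/I_{\t}^2$.

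To finish, note that ``$C$ is quadratic'' means exactly that the canonical surjection $q(C)\to C$ is an isomorphism. Under the identification above this map is the natural quotient $(A\ast B)/I_{\t}^2\to(A\ast B)/I_{\t}$, which is an isomorphism if and only if $I_{\t}^2=I_{\t}$. Hence $A\tsr_{\t} B$ is $\t$-quadratic, and Theorem \ref{UEPquadratic} gives that $\t$ has the unique extension property. I expect the main obstacle to be the identification $q(C)=(A\ast B)/I_{\t}^2$: this is precisely where quadraticity of $A$ and $B$ enters, since without it the higher-degree defining relations of $A\ast B$ would survive in $(A\ast B)/I_{\t}^2$, making it a proper quotient of $q(C)$ and breaking the equivalence, consistent with the failure exhibited in Example \ref{Ugh3}.
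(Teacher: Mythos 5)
Your proposal is correct and takes essentially the same approach as the paper: both reduce the statement, via Theorem \ref{UEPquadratic}, to showing that $A\tsr_{\t}B$ is $\t$-quadratic (i.e.\ $I_{\t}=I_{\t}^2$), and both do so by presenting everything over $T(A_1\oplus B_1)$ and identifying $(A\ast B)/I_{\t}^2$ with the quadratic part of $A\tsr_{\t}B$, which is exactly where quadraticity of $A$ and $B$ is used. The only difference is in one step's justification: you establish $K_0=(\ker \pi)_2$ by a dimension count, whereas the paper obtains the same identification from the ideal identity $\la (J_A+J_B+J_{\t})_2\ra = J_A^2+J_B^2+J_{\t}^2$, which follows directly from degree considerations.
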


\begin{proof}
Write $A = T(A_1)/I_A$ and $B = T(B_1)/I_B$. Let $I_A^2 = \la (I_A)_2 \ra$ be the ideal of $T(A_1)$ generated by the quadratic part of $I_A$. Similarly define $I_B^2 = \la (I_B)_2 \ra$. The assumption that $A$ and $B$ are quadratic means that $I_A^2 = I_A$ and $I_B^2 = I_B$. Consider the tensor algebra $T(A_1 \oplus B_1)$. There are canonical inclusions of $T(A_1)$ and $T(B_1)$ into $T(A_1 \oplus B_1)$. Let $J_A$ and $J^2_A$ denote the ideals of $T(A_1 \oplus B_1)$ generated by $I_A$ and $I^2_A$, respectively. Similarly define $J_B$ and $J^2_B$. 

Choose graded $\k$-linear sections $\eta_A: A \to T(A_1)$ and $\eta_B: B \to T(B_1)$ of the canonical algebra maps $\pi_A :T(A_1) \to A$ and $\pi_B :T(B_1) \to B$, respectively. Define  ideals of $T(A_1 \oplus B_1)$ by $$J_{\t} = \la (\eta_B \tsr \eta_A)(b \tsr a - a_{\t} \tsr b_{\t}) : a \in A, \ b \in B\ra$$ and $$J^2_{\t} = \la (\eta_B \tsr \eta_A)(b \tsr a - a_{\t} \tsr b_{\t}) : a \in A_i, \ b \in B_j, \ i+j \leq 2\ra.$$ 

By Proposition \ref{presentation}, we know that $\displaystyle{A \tsr_{\t} B \cong (A \ast B)/I_{\t}}$. Then we have the following isomorphisms
\begin{align*}
(A \ast B)/I_{\t} &\cong A \tsr_{\t} B \\
&\cong \frac{T(A_1 \oplus B_1)}{J_A + J_B + J_{\t}} \\
&\cong \frac{T(A_1 \oplus B_1)}{\la (J_A+J_B + J_{\t})_2 \ra} \\
&\cong \frac{T(A_1 \oplus B_1)}{J^2_A + J^2_B + J^2_{\t}} \\
&\cong (A \ast B)/I^2_{\t}.
\end{align*}
We have used the hypothesis that $A \tsr_{\t} B$ is quadratic on the third isomorphism, and the assumption that $A$ and $B$ are quadratic on the last isomorphism. The fourth isomorphism is actually an equality since it is easy to check that $\la (J_A+J_B + J_{\t})_2 \ra = J^2_A + J^2_B + J^2_{\t}$. Now, since $I^2_{\t} \subseteq I_{\t}$, we conclude that $I^2_{\t} = I_{\t}$. Hence $A \tsr_{\t} B$ is $\t$-quadratic, from which it follows, by Theorem \ref{UEPquadratic}, that $\t$ has the unique extension property.
\end{proof}

To conclude this section we will show that a certain large class of two-sided graded twisting maps has the unique extension property. As promised, this will show that the graded twisting map constructed in Theorem \ref{extendSepTau} is unique. Furthermore, we will study the Koszul property for a subclass of this class in the next section.

For the rest of this section assume that $A$ and $B$ are one-generated $\k$-algebras. Suppose that $\t: B \tsr A \to A \tsr B$ is a graded twisting map and that there is a vector space decomposition $B_1 = B_1' \oplus B_1''$ such that $$\t(B_1' \tsr A_1) \subseteq A_2 \tsr B_0 \oplus A_1 \tsr B_1,$$
$$\t(B_1'' \tsr A_1) \subseteq A_1 \tsr B_1 \oplus A_0 \tsr \m_B(B_1 \tsr B_1'' \oplus B_1' \tsr  B_1').$$

\begin{thm}
\label{generalized separable maps have UEP}
Assume the hypotheses of the last paragraph. Then $\t$ has the unique extension property.
\end{thm}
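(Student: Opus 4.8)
By Definition \ref{UEP definition} the task is to show that for each $n\ge 3$, any $\k$-linear map twisting to degree $n$ which agrees with $\t$ in all degrees $<n$ must agree with $\t$ in degree $n$. Applying Lemma \ref{TMEP} to the truncation $t=\t|_{(B\tsr A)_{\le n-1}}$, such an extension to degree $n$ is unique exactly when $\d_n$ is surjective (here $t^A=\t^A$ and $t^B=\t^B$ are those of $\t$). So the whole theorem reduces to proving that
$$\d_n:(B_+\tsr A_+\tsr A_+)_n\oplus (B_+\tsr B_+\tsr A_+)_n\to (B_+\tsr A_+)_n$$
is surjective for every $n\ge 3$; fixing $n$, I write $w\equiv 0$ to mean $w\in\im\,\d_n$. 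Since $A$ is one-generated, any $a\in A_j$ with $j\ge 2$ is $a=\sum_k a'_k x_k$ with $x_k\in A_1$, and feeding $\sum_k b\tsr a'_k\tsr x_k$ into the first summand of $\d$ shows $b\tsr a\equiv\sum_k t^B(b\tsr a'_k)\tsr x_k\in B_{n-1}\tsr A_1$. Hence it suffices to prove $B_{n-1}\tsr A_1\subseteq\im\,\d_n$, and by multilinearity I may test on $b\tsr x$ where $b=\b_1\cdots\b_{n-1}$ is a product of elements $\b_i\in B_1'\cup B_1''$.

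\textbf{Two structural facts.} First, for $d\in B_1''$ and $x\in A_1$ the hypothesis gives $\t(d\tsr x)\in A_1\tsr B_1\oplus A_0\tsr\m_B(B_1\tsr B_1''\oplus B_1'\tsr B_1')$, which has no $A\tsr B_0$ component, so $t^A(d\tsr x)=0$. Second, I claim $t^B(u\tsr a)=0$ for all $u\in B_1'$ and all $a\in A_+$. For $a\in A_1$ this is immediate from $\t(B_1'\tsr A_1)\subseteq A_2\tsr B_0\oplus A_1\tsr B_1$; for $\deg a\ge 2$ I would induct, writing $a=a'x$ and using $\t(1_B\tsr\m_A)=(\m_A\tsr 1_B)(1_A\tsr\t)(\t\tsr 1_A)$ from Remark \ref{AltTwistingDef}: the inductive hypothesis forces every $A$-tensorand of $\t(u\tsr a')$ into $A_+$, and left multiplication by $\m_A$ keeps it in $A_+$, so the $A_0\tsr B$ component vanishes.

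\textbf{The two easy cases.} If the trailing generator of $b$ lies in $B_1''$, say $b=b_0 d$, then the second summand of $\d$ gives $\d(b_0\tsr d\tsr x)=b_0 d\tsr x-b_0\tsr t^A(d\tsr x)=b_0 d\tsr x$ by the first fact, so $b\tsr x\equiv 0$. If the leading generator of $b$ lies in $B_1'$, say $b=u b_0$, then $\d(u\tsr b_0\tsr x)$ gives $b\tsr x\equiv u\tsr t^A(b_0\tsr x)$; writing $t^A(b_0\tsr x)\in A_{n-1}$ as $\sum_k a'_k x_k$ and reducing once more through the first summand yields $u\tsr t^A(b_0\tsr x)\equiv\sum_k t^B(u\tsr a'_k)\tsr x_k=0$ by the second fact. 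Thus $b\tsr x\equiv 0$.

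\textbf{The crux.} The only monomials not yet handled are those whose leading generator lies in $B_1''$ and whose trailing generator lies in $B_1'$. For such $b$ I would peel the leading $B_1''$ as above and reduce to a combination $\sum t^B(d\tsr z)\tsr y$ with $d\in B_1''$ and $z\in A_+$, so the problem becomes controlling $t^B$ on $B_1''\tsr A_+$. The hypothesis gives the base case $t^B(B_1''\tsr A_1)\subseteq\m_B(B_1\tsr B_1''\oplus B_1'\tsr B_1')$, and I expect the heart of the proof to be an inductive structural description of $t^B(d\tsr a)$, refined enough that each resulting monomial either falls into the leading-$B_1'$ or trailing-$B_1''$ cases above or strictly decreases a well-founded measure on the $B_1'/B_1''$-pattern of $B$-monomials, forcing the reduction to terminate at $0$. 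Pinning down this measure is the main obstacle, since a naive count of $B_1''$-generators or of the trailing $B_1'$-run need not decrease; everything else is the bookkeeping recorded above. (Equivalently, via Theorem \ref{UEPquadratic}, this is exactly the statement that $I_{\t}$ is generated by $(I_{\t})_2$, i.e. that straightening higher-degree $\t$-relations by the quadratic ones is confluent.)
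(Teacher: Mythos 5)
Your reduction through Lemma \ref{TMEP} is legitimate: applying that lemma to the truncation of $\t$ to degrees $\le n-1$ (an extension to degree $n$ exists, namely $\t$ itself), the unique extension property to degree $n$ in the sense of Definition \ref{UEP definition} is exactly surjectivity of $\d_n$, so the theorem is equivalent to what you set out to prove. Your two structural facts are correct, as are the reductions to $B_{n-1}\tsr A_1$ and the two ``easy cases.'' But the proof stops exactly at its crux: monomials whose leading generator lies in $B_1''$ and whose trailing generator lies in $B_1'$ are never shown to lie in $\im\,\d_n$; the ``inductive structural description of $t^B(d\tsr a)$'' and the well-founded measure are stated as hopes, not results. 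This is a genuine gap, not bookkeeping. Note that everything you did prove uses only that $\t(B_1'\tsr A_1)$ has no $A_0\tsr B$ component and that $\t(B_1''\tsr A_1)$ has no $A\tsr B_0$ component; the defining hypothesis that the $A_0\tsr B_2$ part of $\t(B_1''\tsr A_1)$ lies in $\m_B(B_1\tsr B_1''\oplus B_1'\tsr B_1')$ --- i.e.\ excludes terms in $\m_B(B_1''\tsr B_1')$, precisely the pattern of your unhandled monomials --- is never invoked. Since graded twisting maps without a constraint of this kind can fail the unique extension property (Example \ref{non-Koszul example}), no argument can be complete before that hypothesis enters, so the missing case is where the entire content of the theorem lies.

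For comparison, the paper argues directly that $\t'_n=\t_n$ rather than via surjectivity of $\d_n$: first on $(B_1\tsr A)_n$ by induction on the degree of the $A$-argument, then on all of $(B\tsr A)_n$ by induction on the degree of the $B$-argument, peeling one generator off $b$ with the identity $\t(\m_B\tsr 1_A)=(1_A\tsr \m_B)(\t\tsr 1_B)(1_B\tsr\t)$. The key step is $d\tsr x$ with $d\in B_1''$: writing $x=\m_A(x_1\tsr x_2)$ and expanding $\t(d\tsr x)=(\m_A\tsr 1_B)(1_A\tsr\t)(\t_2(d\tsr x_1)\tsr x_2)$, the hypothesis guarantees that every $B$-tensorand of $\t_2(d\tsr x_1)$ is of one of three types --- in $B_1$, in $\m_B(B_1\tsr B_1'')$, or in $\m_B(B_1'\tsr B_1')$ --- and these are disposed of, respectively, by the induction on $A$-degree, by that induction together with the previously settled $B_1'$ case, and by two applications of the $B_1'$ case. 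In other words, the correct induction is on the degree of the $A$-argument, with the $B_1'/B_1''$ pattern controlled by the hypothesis itself; no monotone measure on $B$-monomials is needed. If you want to finish within your framework, the statement to prove by this same induction is structural: every monomial occurring in $t^B(d\tsr a)$, for $d\in B_1''$ and $a\in A_+$, ends either in $B_1''$ or in $\m_B(B_1'\tsr B_1')$; monomials of the first kind then fall to your trailing-$B_1''$ reduction, while the second kind require one further round of the same analysis. The completion is essentially the paper's induction transplanted into your reduction, not a missing triviality.
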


\begin{proof} Let $n \geq 2$ and let $\t': B \tsr A \to A \tsr B$ be a linear map that is graded twisting to degree $n$ such that $\t_{\leq 2} = \t'_{\leq 2}$. We first show that $\t|_{(B_1 \tsr A)_n} = \t'|_{(B_1 \tsr A)_n}$.

The case $n=2$ is true by assumption. Assume that $n \geq 3$ and that $\t|_{(B_1 \tsr A)_{<n}} = \t'|_{(B_1 \tsr A)_{<n}}$.

First, let $u \tsr x \in (B'_1 \tsr A)_n$. Write $x = \mu_A(x_1 \tsr x_2)$, where $\deg(x_1) = 1$ (note that we are using Sweedler-type notation here since actually $x$ may be a sum of terms of the form $\mu_A(x_1 \tsr x_2)$). Then
\begin{align*}
\t(u \tsr x) &= \t(1_B \tsr \mu_A)(u \tsr x_1 \tsr x_2) \\
&= (\m_A \tsr 1_B)(1_A \tsr \t)(\t_2 \tsr 1_A)(u \tsr x_1 \tsr x_2) \\
&= (\m_A \tsr 1_B)(1_A \tsr \t)(\t'_2 \tsr 1_A)(u \tsr x_1 \tsr x_2).
\end{align*}
By assumption $\t'_2(u \tsr x_1) = (x_1)_{\t} \tsr u_{\t} \in A_2 \tsr B_0 \oplus A_1 \tsr B_1$. Since $\deg(x_2)< \deg(x)$ it follows by induction that $\t(u_{\t} \tsr x_2) = \t'(u_{\t} \tsr x_2)$. Therefore $\t(u \tsr x) = \t'(u \tsr x)$.

Second, let $d \tsr x \in (B''_1 \tsr A)_n$.  We start by noting that if we write $\t(d \tsr x) = x_{\t} \tsr d_{\t} \in A \tsr B$, then $\deg(x_{\t}) \leq \deg(x)$. Write $x = \m_A(x_1 \tsr x_2)$, where $\deg(x_1) = 1$. Then 
\begin{align*}
\t(d \tsr x) &= \t(1_B \tsr \m_A)(d \tsr x_1 \tsr x_2) \\
&= (\m_A \tsr 1_B)(1_A \tsr \t)(\t_2 \tsr 1_A)(d \tsr x_1 \tsr x_2) \\
&= (\m_A \tsr 1_B)(1_A \tsr \t)(\t'_2 \tsr 1_A)(d \tsr x_1 \tsr x_2). 
\end{align*}
By assumption $\t'_2(d \tsr x_1) = (x_1)_{\t} \tsr d_{\t} \in A_1 \tsr B_1 \oplus A_0 \tsr \m_B(B_1 \tsr B_1'' \oplus B_1' \tsr B_1')$. Therefore we may write $d_{\t}$ as a sum of terms in $B_1$, $\m_B(B_1 \tsr B''_1)$, or $\m_B(B'_1 \tsr B'_1)$. We address each of these three possibilities. If $b \in B_1$ is a term in $d_{\t}$, then using the inductive assumption, $\t(b \tsr x_2) = \t'(b \tsr x_2)$. If $\m_B(b \tsr d') \in \m_B(B_1 \tsr B''_1)$ is a term in $d_{\t}$, then, using the inductive assumption and the result of the previous paragraph, we see that $\t(\m_B(b \tsr d') \tsr x_2) = \t'(\m_B(b \tsr d') \tsr x_2)$. Finally if $\m_B(u_1 \tsr u_2) \in \m_B(B'_1 \tsr B'_1)$ is a term in $d_{\t}$, then two applications of the result in the previous paragraph yields $\t(\m_B(u_1 \tsr u_2) \tsr x_2) = \t'(\m_B(u_1 \tsr u_2) \tsr x_2)$. Therefore it follows that $\t(d \tsr x) = \t'(d \tsr x)$.

Finally, let $n \geq 3$. Fix a tensor $b \tsr a \in (B_+ \tsr A_+)_{n}$; also assume, without loss of generality, that $b$ is a monomial (with respect to the algebra generating set $B_1$). We induct on $k = \deg(b)$. The base of the induction, where $k = 1$, is taken care of above.

 Let $k \geq 2$ and write $b = b_1 b_2$, where $\deg(b_2) = 1$. Assume inductively that $\t|_{(B_{<k} \tsr A)_{\leq n}} = \t'|_{(B_{<k} \tsr A)_{\leq n}}$. Then
\begin{align*}
\t(b \tsr a) &= \t(\m_B \tsr 1_A)(b_1 \tsr b_2 \tsr a) \\
&= (1_A \tsr \m_B)(\t \tsr 1_B)(1_B \tsr \t)(b_1 \tsr b_2 \tsr a) \\
&= (1_A \tsr \m_B)(\t' \tsr 1_B)(1_B \tsr \t')(b_1 \tsr b_2 \tsr a) \\
&= \t'(b \tsr a).
\end{align*}

\end{proof}


\section{Koszul twisted tensor products}
\label{Koszul twisted tensor products}

%

Recall that a graded twisting map $\t:B\tsr A\to A\tsr B$ is  called \emph{one-sided} if 
$$\t(B_+\tsr A_+)\subseteq (A_+\tsr B_0) \oplus (A_+\tsr B_+)$$ or 
$$\t(B_+\tsr A_+)\subseteq  (A_+\tsr B_+) \oplus (A_0\tsr B_+).$$
The most common examples of algebras constructed from graded twisting maps: Ore extensions, crossed product algebras, or $H$-module algebra smash products, where $H$ is a graded Hopf algebra, come from one-sided twisting maps. 


The following simple observation reveals why one-sided twisting maps more readily permit transfer of structure from $A$ and $B$ to $A\tsr_{\t} B$ than (two-sided) graded twisting maps in general.

\begin{prop}
\label{normalSubalgebra}
If $\t:B\tsr A\to A\tsr B$ is one-sided, then either $A$ or $B$ is a normal subalgebra of $A\tsr_{\t} B$.
\end{prop}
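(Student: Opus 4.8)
The plan is to compute inside $C=A\tsr_{\t}B$, whose multiplication, by Proposition \ref{multiplications}(1), is $\mu_{\t}=(\mu_A\tsr\mu_B)(1_A\tsr\t\tsr 1_B)$. Identifying $A$ with $A\tsr 1_B$ and $B$ with $1_A\tsr B$ inside $C$ and writing $\t(b\tsr a')=a'_{\t}\tsr b_{\t}$ in Sweedler notation, one has $(a\tsr b)(a'\tsr b')=a\,a'_{\t}\tsr b_{\t}b'$. I treat the case $\t(B_+\tsr A_+)\subseteq (A_+\tsr B_0)\oplus(A_+\tsr B_+)=A_+\tsr B$ and show $A$ is normal; the case $\t(B_+\tsr A_+)\subseteq(A_+\tsr B_+)\oplus(A_0\tsr B_+)=A\tsr B_+$ is the mirror image and gives $B$ normal.

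First I would identify the one-sided and two-sided ideals generated by the augmentation ideal $A_+=A_+\tsr 1_B$ of the subalgebra $A$. The unital twisting condition $\t(1_B\tsr a')=a'\tsr 1_B$ gives $(a\tsr 1_B)(a'\tsr b')=aa'\tsr b'$, whence the right ideal is $A_+C=A_+A\tsr B=A_+\tsr B$; this uses only the unital condition. For the left ideal, $(a\tsr b)(a'\tsr 1_B)=a\,a'_{\t}\tsr b_{\t}$, and here the one-sided hypothesis is exactly what is needed: if $a'\in A_+$ then $a'_{\t}\in A_+$ (for $b\in B_+$ because $\t(B_+\tsr A_+)\subseteq A_+\tsr B$, and for $b\in B_0$ by the unital condition), so $a\,a'_{\t}\in A_+$ and therefore $CA_+\subseteq A_+\tsr B$. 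Consequently $CA_+\subseteq A_+\tsr B=A_+C$, and $A_+\tsr B$ is in fact a two-sided ideal: it is a right ideal by construction, and $C(A_+\tsr B)=C(A_+C)=(CA_+)C\subseteq(A_+C)C=A_+C=A_+\tsr B$ shows it is also a left ideal. Thus $A_+C$ is a two-sided ideal of $C$ with $C/A_+C\cong B$, which is the assertion that $A$ is a normal subalgebra.

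The only delicate point, and the step I expect to be the main obstacle, is passing from the containment $CA_+\subseteq A_+C$ to the symmetric equality $CA_+=A_+C$ that full normality demands. This amounts to the reverse inclusion $A_+\tsr B\subseteq CA_+$, i.e.\ rewriting an arbitrary $a\tsr b$ with $a\in A_+$ as a $C$-linear combination ending in a factor from $A_+$. Unlike the containments above, this cannot be extracted from the one-sided condition on $\t_2$ alone; it requires the full twisting identity of Remark \ref{AltTwistingDef}. I would argue by downward induction on $\deg b$: expanding $(1_A\tsr b)(a\tsr 1_B)=\t(b\tsr a)\in CA_+$ and isolating its top $B$-degree component, one hopes the twisting axioms force that component to be nondegenerate, so that $a\tsr b$ equals $\t(b\tsr a)$ minus strictly lower $B$-degree terms, each already in $CA_+$ by induction; establishing this nondegeneracy is precisely the crux. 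I note, however, that the later applications (e.g.\ Corollary \ref{one-sided Koszul}, which requires only the generator-level containment $C_1A_1\subseteq A_1C_1$, itself immediate here since $\t(B_1\tsr A_1)\subseteq A_2\tsr B_0\oplus A_1\tsr B_1=A_1C_1$) use only that $A_+C$ is a two-sided ideal; if this weaker reading is the intended meaning of ``normal subalgebra,'' the computation of the preceding paragraph already suffices and there is no further obstacle, consistent with the result being labelled a simple observation.

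Finally, the second one-sided case is entirely symmetric under interchanging $A$ and $B$: the unital condition gives $CB_+=A\tsr B_+$, the hypothesis $\t(B_+\tsr A_+)\subseteq A\tsr B_+$ gives $B_+C\subseteq A\tsr B_+$, and the same two-sided-ideal computation shows $A\tsr B_+=B_+C$ is a two-sided ideal of $C$, so $B$ is a normal subalgebra. All containments except the reverse inclusion flagged above follow formally from the multiplication formula together with the unital and one-sided conditions.
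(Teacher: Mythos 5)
Your proof is correct, and it is essentially the paper's argument: the paper deduces the proposition ``immediately'' from the presentation $A\tsr_{\t} B\cong (A\ast B)/I_{\t}$ of Proposition \ref{presentation}, where in the one-sided case the $\t$-relations $b\tsr a-\t(b\tsr a)$ have $\t(b\tsr a)\in A_+\tsr B$ --- which is exactly your containment $CA_+\subseteq A_+\tsr B=A_+C$; computing with $\mu_{\t}$ as you do is the same observation in different packaging.

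Regarding the ``delicate point'' you flag: your weaker reading is the intended one, and your suspicion that the symmetric equality cannot be forced is right --- it is in fact false in general. Normality here means that the one-sided ideal generated by $A_+$ is two-sided with quotient $B$; this is all the paper uses later (Theorem \ref{oneSidedKoszul} invokes Corollary \ref{one-sided Koszul}, which needs only the containment $C_1 i_A(A_1)\subseteq i_A(A_1)C_1$, or its mirror image for $B$). For a counterexample to the reverse inclusion $A_+C\subseteq CA_+$, take $A=\k[x]$, $B=\k[y]$ and the one-sided graded twisting map determined by $\t(y\tsr x)=x^2\tsr 1_B$ (it exists by Theorem \ref{extendSepTau}, and one checks $\t(y^i\tsr x^j)=x^{i+j}\tsr 1_B$). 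Then $C=A\tsr_{\t}B\cong \k\la x,y\ra/\la yx-x^2\ra$, and the degree-2 component of $CA_+$ is spanned by $x^2$ alone (both $x\cdot x$ and $y\cdot x$ equal $x^2$ in $C$), whereas $x\cdot y=xy$ lies in $A_+C$ and is linearly independent of $x^2$ in $C$. So $A_+C\neq CA_+$: the nondegeneracy you hoped to extract from the twisting axioms genuinely fails, no further argument is needed, and your first two paragraphs already constitute a complete proof.
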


\begin{proof}
This follows immediately from Proposition \ref{presentation}.

\end{proof}

Proposition \ref{normalSubalgebra} suggests that twisted tensor products arising from one-sided twisting maps can be fruitfully studied using standard techniques concerning normal subalgebras. Such is the case with Theorem \ref{oneSidedKoszul} below. Before addressing the Koszul property, we examine one-sided twisting maps in the framework of Section \ref{quadratic}.

\begin{prop}
\label{oneSidedUEP}
Let $A$ and $B$ be one-generated graded $\k$-algebras. If $\t:B\tsr A\to A\tsr B$ is a one-sided graded twisting map, then $\t$ has the unique extension property. 
\end{prop}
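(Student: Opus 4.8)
The plan is to derive the unique extension property from the surjectivity clause of Lemma \ref{TMEP}. By Definition \ref{UEP definition}, $\t$ has the unique extension property exactly when, for each $m \geq 3$, the map $\t_{\leq m-1}$ (which is graded twisting to degree $m-1$) admits a \emph{unique} extension to a map that is graded twisting to degree $m$; since $\t_{\leq m}$ is already such an extension, Lemma \ref{TMEP} reduces the whole statement to showing that the map $\d_m$ built from $t = \t_{\leq m-1}$ is surjective onto $(B_+ \tsr A_+)_m$ for every $m \geq 3$. The two one-sided conditions are exchanged by interchanging the roles of $A$ and $B$, so I would argue the case $\t(B_+ \tsr A_+) \subseteq A_+ \tsr B$ in detail and obtain the other by the symmetric argument.

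The key structural input is that in this case the $A_0 \tsr B$-component of $\t$ vanishes on $B_+ \tsr A_+$, so $t^B|_{(B_+ \tsr A_+)} = 0$; consequently the first summand of $\d$ reduces on its domain $(B_+ \tsr A_+ \tsr A_+)_m$ to $1_B \tsr \mu_A$. Writing $P_j$ for the span of the bidegree pieces $B_i \tsr A_j$ with $i + j = m$ and $i \geq 1$, so that $(B_+ \tsr A_+)_m = \bigoplus_{j \geq 1} P_j$, one-generation of $A$ makes $\mu_A \colon A_+ \tsr A_+ \to A_{\geq 2}$ surjective, hence $1_B \tsr \mu_A$ surjects onto $\bigoplus_{j \geq 2} P_j$. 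Thus the image of $\d_m$ already contains every bidegree with $A$-degree at least $2$.

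It remains to capture $P_1$, and here the second summand $\mu_B \tsr 1_A - 1_B \tsr t^A$ enters. For $\zeta \in (B_+ \tsr B_+ \tsr A_1)_m$ the correction term $(1_B \tsr t^A)(\zeta)$ lies in $\bigoplus_{j \geq 2} P_j$, because $t^A$ raises the relevant $A$-degree to at least $2$; therefore, modulo the piece already shown to lie in the image, $\d_m(\zeta)$ agrees with $(\mu_B \tsr 1_A)(\zeta)$. Since $B$ is one-generated, $\mu_B \colon B_+ \tsr B_+ \to B_{\geq 2}$ is surjective, and as $m \geq 3$ forces the $B$-degree $m-1 \geq 2$ on $P_1$, these products exhaust $P_1$. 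Hence $P_1 \subseteq \im \d_m$ as well, and $\d_m$ is surjective.

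The main obstacle I anticipate is not the surjectivity of $\mu_A$ and $\mu_B$ but the presence of the lower-order correction terms $t^A$ (and $t^B$ in the mirror case) in $\d$: one must check that these land in strictly higher $A$-degree (respectively $B$-degree) so that they do not obstruct reaching $P_1$, which is exactly what the one-sided hypothesis guarantees and what makes the triangular argument of the previous paragraph valid. It is also worth recording why $m \geq 3$ is essential: the degenerate bidegree $B_1 \tsr A_1$, lying outside the range of both $\mu_A$ and $\mu_B$, occurs only in degree $m = 2$, a degree the unique extension property never tests. As a cross-check, one could instead show directly that $I_{\t}$ is generated in degree $2$ and invoke Theorem \ref{UEPquadratic}, reducing each higher $\t$-relation by the multiplicativity axioms while using one-sidedness to keep every intermediate relation in strictly lower total degree; but the surjectivity route above is the shorter of the two.
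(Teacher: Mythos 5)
Your proof is correct, but it takes a genuinely different route from the paper's. The paper disposes of Proposition \ref{oneSidedUEP} in one line by citing Theorem \ref{generalized separable maps have UEP}: a one-sided twisting map satisfies the hypotheses of that theorem with a trivial decomposition of $B_1$ (take $B_1'=B_1$, $B_1''=0$ when $\t(B_+\tsr A_+)\subseteq A_+\tsr B$, and $B_1'=0$, $B_1''=B_1$ in the mirror case, where one-generation of $B$ gives $\mu_B(B_1\tsr B_1)=B_2$); that theorem is in turn proved by induction on degree, factoring elements and applying the identities of Remark \ref{AltTwistingDef}. You instead verify the surjectivity criterion of Lemma \ref{TMEP} directly. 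Your reduction of the unique extension property to surjectivity of $\d_m$ onto $(B_+\tsr A_+)_m$ for all $m\geq 3$ is legitimate, since being graded twisting to degree $m$ depends only on the truncation to degrees $\leq m$. Your bookkeeping is also right: in the case $\t(B_+\tsr A_+)\subseteq A_+\tsr B$ one has $t^B=0$ on $(B_+\tsr A_+)_{\leq m-1}$, so the first summand of $\d_m$ is exactly $1_B\tsr\mu_A$ and hits every piece of $A$-degree $\geq 2$; meanwhile the correction $1_B\tsr t^A$ always lands in $A$-degree $\geq 2$ (indeed this needs only the grading, since $t^A(b_2\tsr a)\in A_{\deg b_2+\deg a}$), so modulo the image already obtained, the second summand acts as $\mu_B\tsr 1_A$ and captures $B_{m-1}\tsr A_1$ because $m-1\geq 2$ and $B$ is one-generated. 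The mirror case goes through with the roles of the two summands exchanged, and if one wants this to be a formal symmetry rather than a parallel argument, it can be arranged by passing to opposite algebras as in Example \ref{2sidedTTP}. As for what each approach buys: the paper's argument is essentially free given Theorem \ref{generalized separable maps have UEP}, which it needs anyway for separable twisting maps later; yours is self-contained and more structural, exhibiting one-sidedness as exactly the condition that makes $\d_m$ triangular with surjective diagonal blocks, which also explains what goes wrong in Example \ref{non-Koszul example}, where a two-sided correction term destroys surjectivity of $\d_3$.
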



%

\begin{proof}
This result follows immediately from Theorem \ref{generalized separable maps have UEP}.

\end{proof}

\begin{thm}
\label{oneSidedKoszul}
Let $\t:B\tsr A\to A\tsr B$ be a one-sided graded twisting map. 
\begin{enumerate}
\item If $A$ and $B$ are quadratic $\k$-algebras, then $A\tsr_{\t} B$ is quadratic. 
\item If $A$ and $B$ are Koszul $\k$-algebras, then $A\tsr_{\t} B$ is Koszul.
\end{enumerate}
\end{thm}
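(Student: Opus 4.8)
The plan is to dispatch part (1) at once and then prove part (2) by exhibiting one of the two factors as a normal subalgebra and applying Corollary \ref{one-sided Koszul}. For part (1), Proposition \ref{oneSidedUEP} shows that a one-sided $\t$ has the unique extension property, so Corollary \ref{quadTTP} immediately gives that $C := A\tsr_{\t}B$ is quadratic. This also records, for use below, that $C$ is a quadratic algebra with $C_1 = A_1\oplus B_1$ (and recall Koszul algebras are quadratic, so $A$ and $B$ are quadratic as well).

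For part (2), first I would invoke Proposition \ref{normalSubalgebra}: being one-sided, $\t$ makes either $A$ or $B$ a normal subalgebra of $C$. By symmetry it suffices to treat the case $\t(B_+\tsr A_+)\subseteq A_+\tsr B$, in which $A$ is the normal subalgebra; the opposite one-sided case is the mirror image, handled by the right-module analogue of the argument below (equivalently, by passing to opposite algebras and using that the Koszul property is left--right symmetric). Let $f = i_A:A\to C$ be the inclusion, and I would verify the hypotheses of Corollary \ref{one-sided Koszul} one at a time.

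The normality condition $C_1 f(A_1)\subseteq f(A_1)C_1$ follows from one-sidedness: since $A$ is one-generated we have $A_2 = A_1 A_1$, and the defining $\t$-relations give $B_1 A_1\subseteq A_2 + A_1 B_1 \subseteq A_1 C_1$, while $A_1 A_1\subseteq A_1 C_1$ trivially, so $C_1 A_1 = (A_1\oplus B_1)A_1\subseteq A_1 C_1$. Consequently $f(A_1)C = A_+ C$ is a two-sided ideal and $C/f(A_1)C\cong B$, which is Koszul by hypothesis; $A$ is Koszul by hypothesis; and the multiplication map $A\tsr B\to C$ is an isomorphism of graded left $A$-modules, so $C$ is a free left $A$-module and in particular the left action of $A$ on $C$ is free in all degrees $\le 3$. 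As $f$ is a map of quadratic algebras by part (1), all the hypotheses of Corollary \ref{one-sided Koszul} are met and $C$ is Koszul.

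I expect the main obstacle to be organizational rather than computational: one must match each of the two one-sided cases to the correct handedness of Corollary \ref{one-sided Koszul}. The subtlety is that one cannot symmetrize the two cases naively, because $\t$ need not be invertible; concretely, in the case $\t(B_+\tsr A_+)\subseteq A_+\tsr B$ only the inclusion $C_1 A_1\subseteq A_1 C_1$ is available, while the reverse inclusion $A_1 C_1\subseteq C_1 A_1$ can fail, so $A$ --- and not $B$ --- must play the role of the normal subalgebra. Handling the opposite one-sided case therefore genuinely requires the mirror (right-module) version of the normal-subalgebra criterion, which is why I appeal to the left--right symmetry of Koszulity above.
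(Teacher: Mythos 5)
Your proof is correct and follows essentially the same route as the paper: part (1) via the unique extension property (Proposition \ref{oneSidedUEP} together with Corollary \ref{quadTTP}), and part (2) by exhibiting one factor as a normal subalgebra over which $A\tsr_{\t}B$ is a free module and applying Corollary \ref{one-sided Koszul}. The only difference is which symmetric case is written out: the paper treats $\t(B_+\tsr A_+)\subseteq A\tsr B_+$ (so $B$ is normal and $C$ is a free right $B$-module) and calls the other case analogous, whereas you treat the $A$-normal case --- which matches the stated left-handed form of Corollary \ref{one-sided Koszul} exactly --- and dispatch the other via opposite algebras; your explicit attention to this handedness issue is, if anything, more careful than the paper's.
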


\begin{proof}
Statement (1) is an immediate consequence of Proposition \ref{oneSidedUEP} and the last statement of Proposition \ref{presentation}.

Assume that $A$ and $B$ are Koszul algebras and $$\t(B_+\tsr A_+)\subseteq  (A_+\tsr B_+) \oplus (A_0\tsr B_+).$$  Let $C = A \tsr_{\t} B$. Then the map $i_B: B \to C$ given by $i_B(b) = 1 \tsr b$ for all $b \in B$ is an injective homomorphism of algebras. Moreover, $C$ is a free right $B$-module. The twisting map condition ensures that $i_B(B_1)C_1 \subseteq C_1 i_B(B_1)$. Finally note that $C/(i_B(B_1)C) \cong A$ as algebras. The case where $$\t(B_+\tsr A_+)\subseteq (A_+\tsr B_0) \oplus (A_+\tsr B_+)$$ is analogous. Now Statement (2) follows from Statement (1) and Corollary \ref{one-sided Koszul}.

\end{proof}


We assume that (2) of Theorem \ref{oneSidedKoszul} is well-known, but we were not able to find the statement in this generality in the literature.  
%
%


Absent the requirement that at least one of $A$ and $B$ is a normal subalgebra of $A\tsr_{\t} B$,  questions about the structure of $A\tsr_{\t} B$ become much more difficult to answer. Even constructing graded twisting maps that are not one-sided is far from straightforward. We begin with an example that shows that a twisted tensor product of Koszul algebras need not be Koszul, nor even quadratic. The example also shows: the twisted tensor product of two algebras of finite global dimension need not have finite global dimension, and the class of Artin-Schelter regular algebras is not closed under taking twisted tensor products.


\begin{ex}
\label{non-Koszul example}
Let $A = \k[x]$, $B = \k[y]$. Define a $\k$-linear map $\t: B \tsr A \to A \tsr B$ by $$\t(y^i \tsr x^j) = \begin{cases} x^j \tsr y^i &\text{ if } i \text{ or } j \text{ is even} \\
x^{j+1} \tsr y^{i-1} - x^j \tsr y^i + x^{j-1} \tsr y^{i+1} &\text{ if } i \text{ and } j \text{ are both odd}.
\end{cases}$$
We make the following claims about $\t$:
\begin{enumerate}
\item $\t$ is a graded twisting map,
\item $\t$ does not have the unique extension property to degree 3,
\item $\t$ has the unique extension property to degree $n$, for all $n \geq 4$.
\end{enumerate}
For (1) we use Theorem \ref{simpler extension condition}. First of all, in degree 3, we have
\begin{align*}
(\m_A \tsr 1_B)(1_A \tsr \t)(\t \tsr 1_A)(y \tsr x \tsr x) &= (\m_A \tsr 1_B)(1_A \tsr \t)((x^2 \tsr 1_B - x \tsr y + 1_A \tsr y^2) \tsr x)  \\
&= (\m_A \tsr 1_B)(x^2 \tsr x \tsr 1_B - x \tsr x^2 \tsr 1_B + x \tsr x \tsr y \\
& \qquad \qquad \ \ \ \ \ \ \  - x \tsr 1_A \tsr y^2+ 1_A \tsr x \tsr y^2) \\
&= x^2 \tsr y \\
&= \t(y \tsr x^2).
\end{align*}
Similiarly, $(1_A \tsr \m_B)(\t \tsr 1_B)(1_B \tsr \t)(y \tsr y \tsr x) = \t(y^2 \tsr x)$. Now assume $\t_{\leq n}$ is twisting to degree $n$, for $n \geq 3$. We check condition (2) of Theorem \ref{simpler extension condition} and leave the check of condition (3) of Theorem \ref{simpler extension condition} to the reader.

Suppose that $i+j = n+1$ and $i, j \geq 1$. 

{\bf Case 1:} $i$ is even.

Then
\begin{align*}
(\m_A \tsr 1_B)(1_A \tsr \t)(\t \tsr 1_A)(y^i \tsr x^j \tsr x) &= (\m_A \tsr 1_B)(1_A \tsr \t)(x^j \tsr y^i \tsr x) \\
&= (\m_A \tsr 1_B)(x^j \tsr x \tsr y^i) \\
&= x^{j+1} \tsr y^i \\
&= \t(y^i \tsr x^{j+1}).
\end{align*}

{\bf Case 2:} $i$ and $j$ are both odd.

Then 
\begin{align*}
&(\m_A \tsr 1_B)(1_A \tsr \t)(\t \tsr 1_A)(y^i \tsr x^j \tsr x) \\
 &= (\m_A \tsr 1_B)(1_A \tsr \t)((x^{j+1} \tsr y^{i-1} - x^j \tsr y^i+ x^{j-1} \tsr y^{i+1}) \tsr x) \\
&= (\m_A \tsr 1_B)(x^{j+1} \tsr x \tsr y^{i-1} - x^j \tsr(x^2 \tsr y^{i-1}-x \tsr y^i+1_A \tsr y^{i+1})+x^{j-1} \tsr x \tsr y^{i+1})) \\
&= x^{j+1} \tsr y^i \\
&= \t(y^i \tsr x^{j+1}).
\end{align*}

The case where $i$ is odd and $j$ is even is left to the reader. We conclude that $\t$ is a twisting map.

For (2), let $\a, \b, \gamma, \d \in \k$ be arbitrary. Define a graded $\k$-linear map $\t': B \tsr A \to A \tsr B$ by insisting that $\t'$ satisfy the unital twisting conditions, $\t'(y \tsr x) = x^2 \tsr 1_B - x \tsr y + 1_A \tsr y^2$, $\t'(y \tsr x^2) = \a x^3 \tsr 1_B + \b x^2 \tsr y + \gamma x \tsr y^2 + \d 1_A \tsr y^3$, $\t'(y^2 \tsr x) = \a x^3 \tsr 1_B + (\b-1) x^2 \tsr y + (\gamma+1) x \tsr y^2 + \d 1_A \tsr y^3$, and $\t'$ on $(B_+ \tsr A_+)_{\geq 4}$ is defined arbitrarily. It is straightforward to check that $\t'$ is a twisting map to degree 3. We conclude that $\t$ does not have the unique extension property to degree 3. Consequently, by Theorem \ref{UEPquadratic}, $A \tsr_{\t} B$ is not quadratic.

For (3), assume that $\t': B \tsr A \to A \tsr B$ is graded twisting to degree $n$, $n \geq 4$, and $\t'_i = \t_i$ for all $i < n$. We have to show that $\t'_n = \t_n$. Let $i, j \geq 1$ and $i+j = n$. If $i = 1$, then $j \geq 3$, so 
\begin{align*}
\t'(y \tsr x^j) &= \t'(1_B \tsr \m_A)(y \tsr x^2 \tsr x^{j-2}) \\
&= (\m_A \tsr 1_B)(1_A \tsr \t')(\t' \tsr 1_A)(y \tsr x^2 \tsr x^{j-2}) \\
&= \t(y \tsr x^j).
\end{align*}

If $i \geq 2$, then 

\begin{align*}
\t'(y^i \tsr x^j) &= \t'(\m_B \tsr 1_A)(y^{i-2} \tsr y^2 \tsr x^{j}) \\
&= (1_A \tsr \m_B)\t' \tsr 1_A)(1_A \tsr \t')(y^{i-2} \tsr y^2 \tsr x^{j}) \\
&= \t(y \tsr x^j).
\end{align*} 

We conclude that $\t$ has the unique extension property to degree $n$.

Finally, it is easy to prove that $A \tsr_{\t} B$ is isomorphic to the algebra $\k \la x, y \ra/ \la x^2, y^2x-xy^2 \ra$. This algebra has infinite global dimension, so $A \tsr_{\t} B$ is not Artin-Schelter regular. 


\end{ex}

%

In Example \ref{non-Koszul example} the twisted tensor product of free algebras is not Koszul because the graded twisting map $\t$ fails to have the unique extension property. When $A$ and $B$ are free algebras, the unique extension property completely determines Koszulity.

\begin{prop}
\label{freeKoszul}
If $A$ and $B$ are one-generated free algebras and $\t:B\tsr A\to A\tsr B$ is a graded twisting map, then $A\tsr_{\t} B$ is a Koszul algebra whenever it is quadratic.
\end{prop}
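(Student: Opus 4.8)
The plan is to exhibit an explicit linear free resolution of the trivial module $\k$ over $C := A \tsr_{\t} B$ of length two, which by definition forces $C$ to be Koszul. Since $A = T(A_1)$ and $B = T(B_1)$ are free, the free product $A \ast B$ is itself the free algebra $T(A_1 \oplus B_1)$, and Proposition \ref{presentation} together with the quadraticity hypothesis gives $C \cong T(A_1 \oplus B_1)/\la R\ra$ with $R = \sp\{b\tsr a - \t(b\tsr a) : a \in A_1,\ b \in B_1\}$ of dimension $lm$, where $l = \dim_{\k} A_1$, $m = \dim_{\k} B_1$. The vector-space isomorphism $A \tsr B \cong C$ of Proposition \ref{multiplications} shows the ``sorted'' monomials (all $A_1$-generators preceding all $B_1$-generators) form a $\k$-basis of $C$, so $H_C(t) = H_A(t)H_B(t) = [(1-lt)(1-mt)]^{-1}$.

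Guided by the free resolutions $0 \to A\tsr A_1 \to A \to \k \to 0$ and $0\to B\tsr B_1\to B\to\k\to 0$, I would form their twisted tensor product over $\t$, namely the complex of free left $C$-modules
$$0 \to C\tsr(A_1\tsr B_1)\xrightarrow{d_2} C\tsr(A_1\oplus B_1)\xrightarrow{d_1} C\xrightarrow{\e}\k\to 0,$$
where $d_1$ is multiplication, $\e$ is augmentation, and $d_2$ is the $C$-linear map sending $1\tsr(a\tsr b)$ to the degree-two syzygy recording the relation $b\tsr a - \t(b\tsr a)$. The identity $d_1 d_2 = 0$ is immediate since these are exactly the relations holding in $C$, and a direct computation gives graded Euler characteristic $H_C(t)\bigl(1 - (l+m)t + lm\,t^2\bigr) = H_C(t)(1-lt)(1-mt) = 1 = H_{\k}(t)$; thus the complex is numerically consistent with being a resolution whose $i$-th term is generated in degree $i$.

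The heart of the proof, and the main obstacle, is to prove this complex is exact. I would stress that the naive route---choosing a monomial order making each $b\tsr a$ a leading term and applying a Gröbner/PBW or filtration argument via Theorem \ref{filtrationTrick}---cannot work in general: when $\t(b\tsr a)$ has nonzero components in both $A_2\tsr B_0$ and $A_0\tsr B_2$, no graded ordered semigroup can place $b\tsr a$ above both its $A_2$- and $B_2$-parts, because the strict translation-invariance axiom forces the semigroup to be cancellative, and cancellation then yields the contradictory requirements from the common left factor $b$ and the common right factor $a$. Instead I would establish acyclicity directly. The key structural input is that the relations do not overlap: a length-one suffix of $b\tsr a$ lies in $A_1$ while a length-one prefix lies in $B_1$, so there are no degree-three obstructions, which is precisely what bounds the projective dimension by two. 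I would filter the twisted total complex by homological-plus-internal degree and analyze the associated spectral sequence converging to $\Tor^C(\k,\k)$; the quadraticity hypothesis is exactly what guarantees that $d_2$, assembled from $\t_2$, accounts for all higher syzygies, so that no correction terms survive and the sequence collapses at the expected page, matching the classical Künneth resolution of $\k$ for the ordinary tensor product.

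Exactness then delivers a graded free resolution $0\to P_2\to P_1\to P_0\to\k\to 0$ with each $P_i$ generated in degree $i$, which is Koszulity. I expect the convergence and collapse of the spectral sequence to be the delicate step, since the twisting obstructs a clean comparison with the untwisted complex; an alternative I would keep in reserve is to build an explicit contracting homotopy on the sorted-monomial basis of $C$, though organizing the straightening induced by $\t$ into such a homotopy is combinatorially heavier.
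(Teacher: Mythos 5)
Your setup is correct and coincides with the paper's: $C\cong T(A_1\oplus B_1)/\la R\ra$ with $\dim_{\k}R=lm$, and $h_C(t)=[(1-lt)(1-mt)]^{-1}$ by the K\"unneth grading. The genuine gap is that you stop exactly where the proof should end: you declare exactness of
$$0\to C\tsr R\xrightarrow{d_2} C\tsr(A_1\oplus B_1)\xrightarrow{d_1} C\to\k\to 0$$
to be ``the main obstacle'' and defer it to an unexecuted spectral sequence whose convergence and collapse you yourself flag as delicate. No such machinery is needed, because the Euler characteristic identity you computed is not merely a consistency check---together with quadraticity it \emph{is} the proof. For any one-generated algebra the complex is exact at $\k$ and at $C$; since $C$ is quadratic with relation space exactly $R$ (your dimension count $(l+m)^2-\dim C_2=lm$ shows $R$ is the full space of quadratic relations), the first syzygy module $\ker d_1$ is generated by the canonical image of $R$, which is precisely the image of $d_2$; so the complex is also exact at $C\tsr(A_1\oplus B_1)$. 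All that remains is injectivity of $d_2$, and the kernel $K=\ker d_2$ has Hilbert series $h_K(t)=h_C(t)\bigl(1-(l+m)t+lm\,t^2\bigr)-1=0$ by your own computation, whence $K=0$. Thus the complex is a linear free resolution of $\k$ and $C$ is Koszul (of global dimension $2$). As written, your argument is incomplete at its critical step, and the step you propose is the wrong tool for it.

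This completed argument is exactly the paper's proof: the paper computes $h_C(t)=(1-(m+n)t+mnt^2)^{-1}$ and cites the numerical criterion (\cite{PP}, Chapter 2, Proposition 2.3)---a quadratic algebra with $n$ generators, $r$ relations, and Hilbert series $(1-nt+rt^2)^{-1}$ is Koszul---which encapsulates the syzygy-plus-Hilbert-series reasoning above. Your digression about why a Gr\"obner/filtration approach via Theorem \ref{filtrationTrick} ``cannot work'' is beside the point (and is what steered you toward heavy machinery): once the Hilbert series is known, quadraticity alone finishes the job, with no further reference to the structure of $\t$.
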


\begin{proof}
Suppose that $A = \k \la x_1, \ldots, x_m \ra$ and $B = \k \la y_1, \ldots, y_n \ra$ are one-generated free algebras. Assume that $\t:B\tsr A\to A\tsr B$ is a graded twisting map. Let $C = A \tsr_{\t} B$ and suppose that $C$ is quadratic. The Hilbert series of $A$ and $B$ are respectively, $h_A(t) = (1-mt)^{-1}$ and $h_B(t) = (1-nt)^{-1}$, so $$h_C(t) = (1-(m+n)t+mnt^2)^{-1}.$$ That $C$ is Koszul follows immediately from \cite{PP} Chapter 2, Proposition 2.3.
\end{proof}

We note that under the assumption that $A$ and $B$ are free, Theorem \ref{UEPquadratic} implies $A\tsr_{\t} B$ is quadratic if and only if $\t$ has the unique extension property. In the very simplest case where $A$ and $B$ are free on a single generator, almost all graded twisting maps have the unique extension property; see Section \ref{abc}.

We do not know of an example where $A$ and $B$ are Koszul, and $A\tsr_{\t} B$ is quadratic but not Koszul. We would not be surprised if such examples exist. The graded twisting map $\t$ for such an example would necessarily be two-sided.

In the remainder of this section we introduce a large class of graded two-sided twisting maps whose associated twisted tensor products are Koszul algebras. This class contains algebras whose Koszulity cannot be explained by Theorem \ref{oneSidedKoszul}; see Example \ref{2sidedTTP}.

Let $A$ and $B$ be one-generated $\k$-algebras. Recall that for a vector space $V$, the tensor algebra generated by $V$ is denoted $T(V)$. Since $A$ and $B$ are one-generated, there are canonical projections $\pi_A:T(A_1)\to A$ and $\pi_B:T(B_1)\to B$. 




\begin{defn}
\label{separable definition}
A graded twisting map $\tau:B\tsr A\to A\tsr B$ is called {\it separable} if there exists a decomposition $B_1=B_1'\oplus B_1''$ such that
\begin{align*}
\tau(B_1'\tsr A_1) &\subset A_2\tsr B_0\oplus A_1\tsr B_1'\\
\tau(B_1''\tsr A_1) &\subset A_1\tsr B_1''\oplus A_0\tsr \mu_B(B_1\tsr B_1'').
\end{align*}
\end{defn}

We remark that the class of separable twisting maps is a subclass of the twisting maps introduced in the paragraph prior to Theorem \ref{generalized separable maps have UEP}. Hence if $\t:B \tsr A \to A \tsr B$ is separable, then $\t$ has the unique extension property, and so $A \tsr_{\t} B$ is quadratic when $A$ and $B$ are quadratic.

One approach to proving an algebra is Koszul that is useful in many situations makes use of semigroup filtrations. In the context of separable graded twisting maps, it is natural to consider the following $\N^3$ filtration - indeed this filtration motivates the definition of separable graded twisting map.

Let $\G=\N^3$. Then $\G$ is a commutative monoid under componentwise addition, and we  equip $\G$ with a grading by total degree; that is, we let $g:\G\to \N$ be the grading homomorphism defined by $g(a,b,c)=a+b+c$. Ordering the fibers $g^{-1}(n)$ lexicographically, with 
$$(0,0,1)<(0,1,0)<(1,0,0)\quad\text{ in } g^{-1}(1)$$
determines the structure of a graded, ordered monoid on $\G$. In particular, if $\a, \b\in g^{-1}(n)$ and $\gamma\in g^{-1}(m)$, then $\a<\b$ implies $\a+\gamma<\b+\gamma$. 

We define a one-generated, $\G$-valued filtration $F_{\bullet}$ on $A\tsr_{\t} B$ as follows:
$$F_{(0,0,0)} = A_0\tsr B_0\qquad F_{(0,0,1)} = A_0\tsr B_1'' + F_{(0,0,0)} $$ 
$$F_{(0,1,0)} = A_1\tsr B_0 + F_{(0,0,1)}\qquad F_{(1,0,0)} = A_0\tsr B_1'+F_{(0,1,0)}$$
and for all $\a\in\G$ such that $g(\a)>1$,
$$F_{\a} = \sum_{i_1+\cdots+i_k=\a} F_{i_1}\cdot F_{i_2}\cdots F_{i_k}$$

Note that by restricting our attention to the subalgebra $B$ we obtain a filtration by the commutative monoid $\N^2$, viewed as the submonoid of $\Gamma$ generated by $(1,0,0)$ and $(0,0,1)$. We denote this filtration by $F^B$.

With Theorem \ref{oneSidedKoszul} in mind, we address the question of when the associated graded algebra of $A \tsr_{\t} B$ with respect to the filtration $F$ is a graded twisted tensor product. 

We begin with several lemmas. In what follows we will use $1$ to denote either of the identity maps $1_A$ and $1_B$. It is clear from context which one we mean.


\begin{lemma}
\label{extendingR}
Let $A$ and $B$ be quadratic algebras with quadratic relation spaces $I_2$ and $J_2$, respectively. If $R:B_1\tsr A_1\to A_1\tsr B_1$ is a linear map such that
$$(1\tsr R)(R\tsr 1)(B_1\tsr I_2)\subseteq I_2\tsr B_1$$ and
$$(R\tsr 1)(1\tsr R)(J_2\tsr A_1)\subseteq A_1\tsr J_2$$
then $R$ extends uniquely to a graded twisting map $\widetilde{R}:B\tsr A\to A\tsr B$
\end{lemma}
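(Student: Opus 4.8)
The plan is to extend the given linear map $R:B_1\tsr A_1\to A_1\tsr B_1$ to free algebras first, then descend to the quotients $A$ and $B$ using the induced-twisting-map machinery of Theorem \ref{inducedTau}. Write $A=T(A_1)/\la I_2\ra$ and $B=T(B_1)/\la J_2\ra$, and let $\overline A=T(A_1)$, $\overline B=T(B_1)$ denote the free covers. Since $R$ is just a linear map on the generating spaces, the first step is to observe that there is a unique graded twisting map $\widetilde R_0:\overline B\tsr \overline A\to \overline A\tsr \overline B$ extending $R$. This follows from Theorem \ref{extendSepTau} (or, more simply in the purely ``pure'' setting where $R$ lands in $A_1\tsr B_1$, from Corollary \ref{algebra map determines twisting map}): one defines $R^{\overline B}(b):\overline A\to \overline A\tsr\overline B$ by iterated application of the compatibility formula $\t^{\overline B}(b)(x_{j_1}\cdots x_{j_k})=(\m_A\tsr 1)(1\tsr \ev_{x_{j_k}}\t^{\overline B})(\t^{\overline B}(b)(x_{j_1}\cdots x_{j_{k-1}}))$, and checks it respects multiplication in $\overline A$. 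Uniqueness on the free algebras is exactly Theorem \ref{generalized separable maps have UEP} (the hypotheses are vacuously arranged by taking $B_1''=0$ for a pure map, or as given).

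The heart of the argument is the second step: verifying that $\widetilde R_0$ descends to $A\tsr_{\widetilde R} B$. By Theorem \ref{inducedTau}, it suffices to check the ideal-preservation condition
$$\widetilde R_0\big(\overline B\tsr \la I_2\ra + \la J_2\ra\tsr \overline A\big)\subseteq \la I_2\ra\tsr \overline B + \overline A\tsr \la J_2\ra.$$
Because $\widetilde R_0$ is a twisting map, the two alternative characterizations in Remark \ref{AltTwistingDef}, namely $\widetilde R_0(1\tsr\m_A)=(\m_A\tsr 1)(1\tsr\widetilde R_0)(\widetilde R_0\tsr 1)$ and $\widetilde R_0(\m_B\tsr 1)=(1\tsr\m_B)(\widetilde R_0\tsr 1)(1\tsr\widetilde R_0)$, let one propagate the hypotheses from the generating relations to the whole ideals. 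Concretely, the two given hypotheses say exactly that $\widetilde R_0(B_1\tsr I_2)=(1\tsr R)(R\tsr 1)(B_1\tsr I_2)\subseteq I_2\tsr B_1\subseteq \la I_2\ra\tsr\overline B$ and symmetrically $\widetilde R_0(J_2\tsr A_1)\subseteq A_1\tsr J_2$. One then bootstraps: to handle $b\tsr r$ with $r\in I_2$ and $b$ a longer word, factor $b=b'b_1$ and use the $\m_B$-identity to reduce the length of the $B$-factor; to handle $b\tsr (wr w')$ for $r\in I_2$ and $w,w'\in\overline A$, use the $\m_A$-identity to peel off the outer generators of $w,w'$ one at a time, each step keeping the relation inside $\la I_2\ra\tsr\overline B+\overline A\tsr\la J_2\ra$ because that span is an $(\overline A,\overline B)$-subbimodule closed under the twisting operations. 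A double induction on the lengths of the surrounding $A$- and $B$-words then yields the full ideal containment.

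The third and final step is uniqueness of the descended map. Given the two hypotheses, the induced $\widetilde R$ on $A\tsr_{\widetilde R} B$ restricts on $B_1\tsr A_1$ to $R$, and any graded twisting map $\t':B\tsr A\to A\tsr B$ agreeing with $\widetilde R$ in degree $2$ must equal $\widetilde R$ by Theorem \ref{generalized separable maps have UEP} (whose hypotheses are satisfied here, again with $B_1''=0$ in the pure case, giving the unique extension property). Thus $\widetilde R$ is the unique graded twisting map extending $R$.

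I expect the main obstacle to be the bootstrapping in the second step: carefully organizing the double induction so that the twisting identities of Remark \ref{AltTwistingDef} genuinely close the subbimodule $\la I_2\ra\tsr\overline B+\overline A\tsr\la J_2\ra$ under both $\widetilde R_0(1\tsr\m_A)$-reductions and $\widetilde R_0(\m_B\tsr 1)$-reductions simultaneously. The subtlety is that applying the $\m_A$-identity to move past a relation in $I_2$ produces terms involving $\widetilde R_0$ evaluated on \emph{higher-degree} $A$-elements, so one must confirm these intermediate terms still land in the target subbimodule rather than escaping it; this is where the precise form of the two hypotheses, and the fact that they are stated as containments into $I_2\tsr B_1$ and $A_1\tsr J_2$ (not merely into the larger ideals), is essential for the induction to go through cleanly.
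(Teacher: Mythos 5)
Your proposal is correct and follows essentially the same route as the paper: extend $R$ to a graded twisting map on the free algebras via Theorem \ref{extendSepTau}, check that the two hypotheses propagate to the ideals $\la I_2\ra$ and $\la J_2\ra$, descend via Theorem \ref{inducedTau}, and get uniqueness from the unique-extension machinery. The one fact the paper makes explicit, which resolves the obstacle you flag at the end, is that the free extension is strongly graded---$\widehat{R}(T_n(B_1)\tsr T_m(A_1))\subseteq T_m(A_1)\tsr T_n(B_1)$---so every intermediate term arising in your double induction stays pure and cannot escape $\la I_2\ra\tsr T(B_1)+T(A_1)\tsr\la J_2\ra$.
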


\begin{proof}
By Theorem \ref{extendSepTau}, $R$ uniquely determines a graded twisting map between free algebras $\widehat{R}:T(B_1)\tsr T(A_1)\to T(A_1)\tsr T(B_1)$. Since $R(B_1\tsr A_1)\subset A_1\tsr B_1$, it follows by induction on total degree that $\widehat{R}(T_n(B_1)\tsr T_m(A_1))\subseteq T_m(A_1)\tsr T_n(B_1)$ for all $m, n\in \N$. Furthermore, the assumption 
$$(1\tsr R)(R\tsr 1)(B_1\tsr I_2)\subseteq I_2\tsr B_1$$  implies
$$(1\tsr \widehat{R})(\widehat{R}\tsr 1)(T(B_1)\tsr \la I_2 \ra)\subseteq \la I_2 \ra \tsr T(B_1)$$ and
similarly $$(R\tsr 1)(1\tsr R)(J_2\tsr A_1)\subseteq A_1\tsr J_2$$
implies $$(\widehat{R}\tsr 1)(1\tsr \widehat{R})(\la J_2 \ra \tsr T(A_1))\subseteq T(A_1)\tsr \la J_2 \ra.$$ 
By Theorem \ref{inducedTau}, $\widehat{R}$ induces a graded twisting map $\widetilde{R}:B\tsr A\to A\tsr B$.
\end{proof}

Let $I_2=\ker(\m_A|_{A_1\tsr A_1})$ and $J_2=\ker(\m_B|_{B_1\tsr B_1})$, so if $A$ and $B$ are quadratic, then $A\cong T(A_1)/\la I_2 \ra$ and $B\cong T(B_1)/\la J_2 \ra$. 
Define $R:B_1\tsr A_1\to A_1\tsr B_1$ by $R=\pi_{A_1\tsr B_1}\circ \t|_{B_1\tsr A_1}$. Also define $\t_A: B_1 \tsr A_1 \to A_2 \tsr B_0$ by $\t_A = \pi_{A_2 \tsr B_0} \circ \t|_{B_1\tsr A_1}$ and $\t_B: B_1 \tsr A_1 \to A_0 \tsr B_2$ by $\t_B = \pi_{A_0 \tsr B_2} \circ \t|_{B_1\tsr A_1}$.

\begin{lemma}
\label{firstone}
If $\t:B\tsr A\to A\tsr B$ is a separable graded twisting map, then 
$$(1\tsr R)(R\tsr 1)(B_1'\tsr I_2)\subseteq I_2 \tsr B_1'.$$ 
Furthermore, if 
$$\pi_{A_0\tsr A_2\tsr B_1}(1\tsr \t)(\t_B\tsr 1)(B_1\tsr I_2)=0$$
then
$$(1\tsr R)(R\tsr 1)(B_1''\tsr I_2)\subseteq I_2 \tsr B_1''.$$ 
\end{lemma}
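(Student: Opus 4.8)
The plan is to extract both inclusions from the twisting-map identity in Remark \ref{AltTwistingDef}, namely $\t(1_B \tsr \mu_A) = (\mu_A \tsr 1_B)(1_A \tsr \t)(\t \tsr 1_A)$, evaluated on elements $b \tsr \rho$ with $b \in B_1$ and $\rho = \sum a_1 \tsr a_2 \in I_2$. Since $\mu_A(\rho) = 0$ by definition of $I_2 = \ker(\mu_A|_{A_1 \tsr A_1})$, the left-hand side is $\t(b \tsr \mu_A(\rho)) = 0$, so
$$(\mu_A \tsr 1_B)(1_A \tsr \t)(\t \tsr 1_A)(b \tsr \rho) = 0. \qquad (\ast)$$
I would then decompose $\t|_{B_1 \tsr A_1} = \t_A + R + \t_B$ into its $A_2 \tsr B_0$, $A_1 \tsr B_1$, and $A_0 \tsr B_2$ components, expand $(\ast)$, and read off the graded piece lying in $A_2 \tsr B_1$.

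For $b = u \in B_1'$, separability gives $\t_B(u \tsr A_1) = 0$ and $R(u \tsr A_1) \subseteq A_1 \tsr B_1'$, and the same holds one step later for the inner application of $\t$ to $B_1' \tsr A_1$. Tracking K\"unneth degrees, every composite involving a $\t_A$ factor lands in $A_3 \tsr B_0$, so the only contribution of $(\ast)$ to $A_2 \tsr B_1$ is $(\mu_A \tsr 1_{B_1'})(1 \tsr R)(R \tsr 1)(u \tsr \rho)$, which moreover lies in $A_2 \tsr B_1'$. Hence $(\ast)$ forces $(\mu_A \tsr 1_{B_1'})(1 \tsr R)(R \tsr 1)(u \tsr \rho) = 0$. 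Because $1_{B_1'}$ is injective, $\ker(\mu_A \tsr 1_{B_1'})$ restricted to $A_1 \tsr A_1 \tsr B_1'$ equals $I_2 \tsr B_1'$, so $(1 \tsr R)(R \tsr 1)(u \tsr \rho) \in I_2 \tsr B_1'$, which is the first inclusion.

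For $b = d \in B_1''$, separability instead gives $\t_A(d \tsr A_1) = 0$, $R(d \tsr A_1) \subseteq A_1 \tsr B_1''$, and $\t_B(d \tsr A_1) \subseteq A_0 \tsr \mu_B(B_1 \tsr B_1'')$. Now the $A_2 \tsr B_1$ component of $(\ast)$ receives two contributions: the composite $(\mu_A \tsr 1_{B_1''})(1 \tsr R)(R \tsr 1)(d \tsr \rho)$, which lands in $A_2 \tsr B_1''$, and a composite beginning with $\t_B$, which after the harmless factor $\mu_A(1_A \tsr -)$ equals $(1 \tsr \t)(\t_B \tsr 1)(d \tsr \rho)$. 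Projecting to $A_2 \tsr B_1$ and using the identification $A_0 \tsr A_2 \tsr B_1 \cong A_2 \tsr B_1$, the second contribution is precisely $\pi_{A_0 \tsr A_2 \tsr B_1}(1 \tsr \t)(\t_B \tsr 1)(d \tsr \rho)$, which vanishes by the added hypothesis applied to $B_1'' \subseteq B_1$. Thus $(\ast)$ again forces $(\mu_A \tsr 1_{B_1''})(1 \tsr R)(R \tsr 1)(d \tsr \rho) = 0$, and the same kernel computation yields $(1 \tsr R)(R \tsr 1)(d \tsr \rho) \in I_2 \tsr B_1''$.

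The main obstacle is purely organizational: one must verify carefully that, in each case, the degree-$(A_2 \tsr B_1)$ part of $(\ast)$ collects contributions only from the composites identified above, so that the hypothesis exactly cancels the single unwanted $\t_B$-term in the $B_1''$ case while the $B_1'$ case needs no hypothesis at all. The separability constraints are designed to make this degree bookkeeping transparent, and the kernel identity $\ker(\mu_A \tsr 1_{B_1'})|_{A_1 \tsr A_1 \tsr B_1'} = I_2 \tsr B_1'$ (and its $B_1''$ analogue) is routine.
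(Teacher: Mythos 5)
Your proof is correct and takes essentially the same route as the paper's: both expand $(1\tsr \t)(\t\tsr 1)$ on $B_1\tsr I_2$ using separability, invoke the twisting identity together with $\mu_A(I_2)=0$, and isolate the K\"unneth component of $B$-degree one to land $(1\tsr R)(R\tsr 1)$ inside $\ker(\mu_A|_{A_1\tsr A_1})\tsr B_1 = I_2\tsr B_1$, with the extra hypothesis used exactly to kill the single $\t_B$-composite contribution to $A_0\tsr A_2\tsr B_1$ in the $B_1''$ case. If anything, your formulation---projecting the vanishing identity $(\mu_A\tsr 1_B)(1_A\tsr\t)(\t\tsr 1_A)(b\tsr\rho)=0$ onto its $A_2\tsr B_1$ graded piece---makes the degree bookkeeping slightly more explicit than the paper's wording, but the underlying argument is identical.
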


It is obvious from the definitions that the additional condition is equivalent to 
$$\pi_{A_0\tsr A_2\tsr B_1}(1\tsr \t)(\t_B\tsr 1)(B_1''\tsr I_2)=0.$$

\begin{proof}
For all $u\in B_1'$ and $x,y\in A_1$ we have
\begin{align*}
(1\tsr \t)(\t\tsr 1)(u\tsr x\tsr y)&=(1\tsr \t)(\t_A(u\tsr x)\tsr y+R(u\tsr x)\tsr y)\\
&= (1\tsr \t)(\t_A(u\tsr x)\tsr y)+x_R\tsr \t_A(u_R\tsr y) + x_R\tsr R(u_R\tsr y).
\end{align*}
Since the first two terms are in $A_+\tsr A_+\tsr B_0$, we have
$$(1\tsr R)(R\tsr 1)(B_1'\tsr I_2) =\pi_{A_1\tsr A_1\tsr B_1'}(1\tsr \t)(\t\tsr 1)(B_1'\tsr I_2).$$
Since $(1\tsr \t)(\t\tsr 1)(B_1'\tsr I_2)\subset I_2 \tsr B,$ it follows that
$$(1\tsr R)(R\tsr 1)(B_1'\tsr I_2)\subseteq I_2 \tsr B_1'.$$

Now, for all $d\in B_1''$ and $x,y\in A_1$ we have
\begin{align*}
(1\tsr \t)(\t\tsr 1)(d\tsr x\tsr y)&=(1\tsr \t)(\t_B(d\tsr x)\tsr y + R(d\tsr x)\tsr y)\\
&=(1\tsr \t)(\t_B(d\tsr x)\tsr y) + x_R\tsr \t_B(d_R\tsr y) + x_R\tsr R(d_R\tsr y).
\end{align*}
Assume 
$$\pi_{A_0\tsr A_2\tsr B_1}(1\tsr \t)(\t_B\tsr 1)(B_1\tsr I_2)=0.$$
Then for any element $d\tsr \sigma\in B_1''\tsr I_2$, the first two terms in the last line above are in 
$(A\tsr A)_{\le 1}\tsr B_{\ge 2}$ and hence
$$(1\tsr R)(R\tsr 1)(B_1''\tsr I_2)=\pi_{A_1\tsr A_1\tsr B_1''}(1\tsr \t)(\t\tsr 1)(B_1''\tsr I_2).$$
As before, since $(1\tsr \t)(\t\tsr 1)(B_1''\tsr I_2)\subset I_2\tsr B,$ we conclude that
$$(1\tsr R)(R\tsr 1)(B_1''\tsr I_2)\subseteq I_2 \tsr B_1''.$$ 
\end{proof}

When $(1\tsr R)(R\tsr 1)(B_1\tsr I_2)\subseteq I_2\tsr B_1,$ the extension of $R$ to $B_1\tsr A_2$ given by 
$$R(b\tsr a) = (\mu_A\tsr 1)(1\tsr R)(R\tsr 1)(b\tsr \hat{a}),$$
where $\hat{a}\in\mu_A^{-1}(a)$ is any preimage of $a$ in $A_1 \tsr A_1$, is well-defined. Next, we establish analogous conditions under which $R$ extends to $B_2\tsr A_1$. This will be enough to ensure $R$ extends to a graded twisting map on all of $B\tsr A$.

\begin{lemma}
\label{secondone}
If $\t:B\tsr A\to A\tsr B$ is a separable graded twisting map, then for $(X,Y)\in\{ (B_1', B_1'), (B_1', B_1''), (B_1'', B_1'')\}$ we have
$$(R\tsr 1)(1\tsr R)(X\tsr Y\tsr A_1) \subseteq A_1\tsr X\tsr Y.$$
Furthermore, if   
$$\pi_{A_1\tsr B_2\tsr B_0}(\t\tsr 1)(1\tsr \t_A)(J_2\tsr A_1)=0,$$
then
$$(R\tsr 1)(1\tsr R)(J_2\tsr A_1)\subseteq A_1\tsr J_2.$$
\end{lemma}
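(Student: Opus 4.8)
The plan is to prove the two assertions separately. The first is a direct consequence of separability, while the second is the mirror image of Lemma \ref{firstone}, with the roles of $A$ and $B$ (and of $\mu_A$ and $\mu_B$) interchanged, together with one extra bidegree that must be controlled by the stated hypothesis.

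For the inclusions $(R\tsr 1)(1\tsr R)(X\tsr Y\tsr A_1)\subseteq A_1\tsr X\tsr Y$, I would first record that $R$ respects the decomposition $B_1=B_1'\oplus B_1''$. Indeed $R=\pi_{A_1\tsr B_1}\circ\t|_{B_1\tsr A_1}$, and separability gives $\t(B_1'\tsr A_1)\subseteq A_2\tsr B_0\oplus A_1\tsr B_1'$ and $\t(B_1''\tsr A_1)\subseteq A_1\tsr B_1''\oplus A_0\tsr\mu_B(B_1\tsr B_1'')$, whence $R(B_1'\tsr A_1)\subseteq A_1\tsr B_1'$ and $R(B_1''\tsr A_1)\subseteq A_1\tsr B_1''$. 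For each listed pair $(X,Y)$, applying $1\tsr R$ to $X\tsr Y\tsr A_1$ preserves the leading factor $X$ and carries $Y\tsr A_1$ into $A_1\tsr Y$; applying $R\tsr 1$ then carries $X\tsr A_1$ into $A_1\tsr X$, which yields $A_1\tsr X\tsr Y$.

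For the second assertion I would begin with the twisting identity $\t(\mu_B\tsr 1_A)=(1_A\tsr\mu_B)(\t\tsr 1_B)(1_B\tsr\t)$ of Remark \ref{AltTwistingDef}. Since $\mu_B$ annihilates $J_2$, evaluating on $J_2\tsr A_1$ yields $(1_A\tsr\mu_B)(\t\tsr 1_B)(1_B\tsr\t)(J_2\tsr A_1)=0$, so $(\t\tsr 1_B)(1_B\tsr\t)(J_2\tsr A_1)\subseteq\ker(1_A\tsr\mu_B)$. I would then evaluate $(\t\tsr 1_B)(1_B\tsr\t)$ on a typical $u\tsr v\tsr a\in B_1\tsr B_1\tsr A_1$, splitting $\t|_{B_1\tsr A_1}=\t_A+R+\t_B$ according to the grading of $(A\tsr B)_2$, and isolate the $A$-degree-one part, which lives in $A_1\tsr(B\tsr B)_2$. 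Tracking the $B$-bidegree of each piece: the $\t_B$-contribution lands in $A_0\tsr B_1\tsr B_2$ (using the unital condition $\t(u\tsr 1_A)=1_A\tsr u$) and so has no $A$-degree-one part; the $R$-contribution lands in $A_1\tsr B_1\tsr B_1$ and equals precisely $(R\tsr 1)(1\tsr R)(u\tsr v\tsr a)$; and the $\t_A$-contribution, in $A$-degree one, lands in $A_1\tsr B_2\tsr B_0$ and equals $\pi_{A_1\tsr B_2\tsr B_0}(\t\tsr 1_B)(1_B\tsr\t_A)(u\tsr v\tsr a)$.

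The crux is that the hypothesis $\pi_{A_1\tsr B_2\tsr B_0}(\t\tsr 1)(1\tsr\t_A)(J_2\tsr A_1)=0$ is exactly what annihilates this last, stray $A_1\tsr B_2\tsr B_0$ term once the input is restricted to $J_2\tsr A_1$. After it vanishes, the entire $A$-degree-one component of $(\t\tsr 1_B)(1_B\tsr\t)(\sigma\tsr a)$, for $\sigma\in J_2$, equals $(R\tsr 1)(1\tsr R)(\sigma\tsr a)$ and lies in $A_1\tsr B_1\tsr B_1$; being also in $\ker(1_A\tsr\mu_B)$, which preserves $A$-degree, writing it as $\sum_i a_i\tsr\sigma_i$ with the $a_i\in A_1$ independent forces $\mu_B(\sigma_i)=0$, i.e.\ $\sigma_i\in J_2$, so that $(R\tsr 1)(1\tsr R)(\sigma\tsr a)\in A_1\tsr J_2$. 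The step I expect to require the most care is the bidegree bookkeeping in the previous paragraph, in particular verifying that the only $A$-degree-one term outside $A_1\tsr B_1\tsr B_1$ is the $A_1\tsr B_2\tsr B_0$ term governed by the hypothesis, and that no $A_1\tsr B_0\tsr B_2$ term survives.
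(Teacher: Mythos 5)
Your proof is correct and follows essentially the same route as the paper's: separability gives component-preservation of $R$ (so $R(B_1'\tsr A_1)\subseteq A_1\tsr B_1'$, $R(B_1''\tsr A_1)\subseteq A_1\tsr B_1''$) for the first assertion, and for the second, the twisting identity $\t(\mu_B\tsr 1_A)=(1_A\tsr\mu_B)(\t\tsr 1_B)(1_B\tsr\t)$ together with the decomposition $\t|_{B_1\tsr A_1}=\t_A+R+\t_B$ identifies the $A$-degree-one part of $(\t\tsr 1)(1\tsr\t)$ as $(R\tsr 1)(1\tsr R)$ plus exactly the $A_1\tsr B_2\tsr B_0$ term annihilated by the stated hypothesis, which is the same bookkeeping the paper carries out case-by-case over the pairs from $B_1'$ and $B_1''$. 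If anything, your concluding step is slightly more careful than the paper's, which asserts $(\t\tsr 1)(1\tsr \t)(J_2\tsr A_1)\subseteq A\tsr J_2$ outright, whereas you correctly note that one must first confine the $A$-degree-one component to $A_1\tsr B_1\tsr B_1$ and then use that $\ker(1_A\tsr\mu_B)$ is graded by $A$-degree to force that component into $A_1\tsr J_2$.
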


In applications, and in the proof, it can be helpful to note that the additional hypothesis is satisfied if and only if  $$\pi_{A_1\tsr B_2\tsr B_0}(\t\tsr 1)(1\tsr \t_A)(r(J_2)\tsr A_1)=0,$$ where $r:B_1\tsr B_1\to B_1''\tsr B_1'$ is the canonical projection. 

\begin{proof}
Note that for $a\in A_2$, $u\in B_1'$, $\t(u\tsr a)\in A_{\ge 2}\tsr B_{\le 1}$. With this in mind,
the following are straightforward to check, as in the proof of the preceding lemma. For all $u, u_1, u_2\in B_1'$, all $d, d_1, d_2\in B_1''$, and all $x\in A_1$, 
\begin{align*}
(R\tsr 1)(1\tsr R)(u_1\tsr u_2\tsr x) &= \pi_{A_1\tsr B\tsr B}(\t\tsr 1)(1\tsr \t)(u_1\tsr u_2\tsr x)\in A_1\tsr B_1'\tsr B_1'\\
(R\tsr 1)(1\tsr R)(d_1\tsr d_2\tsr x) &= \pi_{A_1\tsr B\tsr B}(\t\tsr 1)(1\tsr \t)(d_1\tsr d_2\tsr x)\in A_1\tsr B_1''\tsr B_1''\\
(R\tsr 1)(1\tsr R)(u\tsr d\tsr x) &= \pi_{A_1\tsr B\tsr B}(\t\tsr 1)(1\tsr \t)(u\tsr d\tsr x)\in A_1\tsr B_1'\tsr B_1''.
\end{align*}
These calculations prove the first part of the lemma.
It is also straightforward to check that
$$(R\tsr 1)(1\tsr R)(d\tsr u\tsr x) = \pi_{A_1\tsr B\tsr B}(\t\tsr 1)(1\tsr \t)(d\tsr u\tsr x)\in A_1\tsr B_1''\tsr B_1'$$
if and only if 
$$\pi_{A_1\tsr B_2\tsr B_0}(\t\tsr 1)(1\tsr \t_A)(d\tsr u\tsr x)=0.\hspace{.5in} (\ast)$$
Now, assume that 
$$\pi_{A_1\tsr B_2\tsr B_0}(\t\tsr 1)(1\tsr \t_A)(J_2\tsr A_1)=0.$$
Then $(\ast)$ holds for all $\rho\tsr x\in J_2\tsr A_1$.
Since $(\t\tsr 1)(1\tsr \t)(J_2\tsr A_1)\subset A\tsr J_2,$ it follows that
$$(R\tsr 1)(1\tsr R)(J_2\tsr A_1)= \pi_{A_1\tsr B\tsr B}(\t\tsr 1)(1\tsr \t)(J_2\tsr A_1)\subseteq A_1\tsr J_2.$$ 

\end{proof}

\begin{thm}
\label{separableKoszul}
Let $A$ and $B$ be quadratic algebras. Let $\t:B\tsr A\to A\tsr B$ be a separable graded twisting map. Assume $\t$ satisfies
\begin{enumerate}
\item $\pi_{A_0\tsr A_2\tsr B_1}(1\tsr \t)(\t_B\tsr 1)(B_1\tsr I_2)=0$, and 
\item  $\pi_{A_1\tsr B_2\tsr B_0}(\t\tsr 1)(1\tsr \t_A)(J_2\tsr A_1)=0.$
\end{enumerate}
Let $F$ denote the filtration on $A\tsr_{\t} B$ defined above, and let $F^B$ denote its restriction to the subalgebra $B$. Let $\widetilde{B} = {\rm gr}^{F^B}(B)$. If $A$ is Koszul, the quadratic part of $\widetilde{B}$  is Koszul and $\widetilde{B}$ has no defining relations in degree 3, then $A\tsr_{\t} B$ is Koszul.
\end{thm}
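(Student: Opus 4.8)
The plan is to prove that $C := A\tsr_{\t} B$ is Koszul by applying the filtration trick (Theorem \ref{filtrationTrick}) to $C$ equipped with the one-generated $\N^3$-valued filtration $F$. First I would note that $C$ is quadratic: a separable twisting map has the unique extension property (by the remark following Definition \ref{separable definition}), so Corollary \ref{quadTTP} applies. Since $F$ is one-generated with values in the graded ordered monoid $\G=\N^3$ by construction, it then suffices to verify the two hypotheses of Theorem \ref{filtrationTrick} for $\mathrm{gr}^F C$: that it has no defining relations in degree $3$, and that its quadratic part is Koszul.

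The crux is to identify $\mathrm{gr}^F C$ with a twisted tensor product $A\tsr_{\widehat R}\widetilde B$, where $\widetilde B=\mathrm{gr}^{F^B}B$ and $\widehat R$ is induced by $R=\pi_{A_1\tsr B_1}\,\t|_{B_1\tsr A_1}$. Hypotheses (1) and (2), through Lemmas \ref{firstone} and \ref{secondone}, yield $(1\tsr R)(R\tsr 1)(B_1\tsr I_2)\subseteq I_2\tsr B_1$ and $(R\tsr 1)(1\tsr R)(J_2\tsr A_1)\subseteq A_1\tsr J_2$. Because separability forces $R(B_1'\tsr A_1)\subseteq A_1\tsr B_1'$ and $R(B_1''\tsr A_1)\subseteq A_1\tsr B_1''$, the map $R$ is weight-preserving, so these inclusions descend to the leading relations $\widetilde J_2$ of $\widetilde B$; Lemma \ref{extendingR} then produces a strongly graded twisting map $\widehat R:\widetilde B\tsr A\to A\tsr\widetilde B$ extending $R$. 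Next I would compute leading terms of the defining relations of $C$ with respect to $F$: the relations of $A$ are weight-homogeneous of weight $(0,2,0)$ and survive unchanged; the relations of $B$ contribute exactly the relations of $\widetilde B$; and each $\t$-relation $bx-\t(b\tsr x)$ should have leading term $bx-R(b\tsr x)$. For $b=u\in B_1'$ this is immediate since the extra term $\t_A(u\tsr x)\in A_2\tsr B_0$ has weight $(0,2,0)$, strictly below the weight $(1,1,0)$ of $ux$; for $b=d\in B_1''$ one must check that the correction term $\t_B(d\tsr x)\in A_0\tsr\mu_B(B_1\tsr B_1'')$ has image of weight at most $(0,0,2)$ in $\widetilde B$, which lies strictly below the weight $(0,1,1)$ of $dx$. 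This yields $\mathrm{gr}^F C\cong A\tsr_{\widehat R}\widetilde B$.

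Granting this identification, the two hypotheses of the filtration trick follow readily. Taking quadratic parts gives $q(\mathrm{gr}^F C)\cong A\tsr_{\widehat R}q(\widetilde B)$; since $\widehat R$ is strongly graded it is in particular pure, hence one-sided, so Theorem \ref{oneSidedKoszul}(2) shows $q(\mathrm{gr}^F C)$ is Koszul because $A$ and $q(\widetilde B)$ are. For the degree-$3$ condition I would argue by Hilbert series: as graded vector spaces $A\tsr_{\widehat R}\widetilde B\cong A\tsr\widetilde B$ and $A\tsr_{\widehat R}q(\widetilde B)\cong A\tsr q(\widetilde B)$, so $h_{\mathrm{gr}^F C}=h_A h_{\widetilde B}$ and $h_{q(\mathrm{gr}^F C)}=h_A h_{q(\widetilde B)}$. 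The hypothesis that $\widetilde B$ has no defining relations in degree $3$ means $h_{\widetilde B}$ and $h_{q(\widetilde B)}$ agree through degree $3$, whence $h_{\mathrm{gr}^F C}$ and $h_{q(\mathrm{gr}^F C)}$ agree through degree $3$. Since $\mathrm{gr}^F C$ and $q(\mathrm{gr}^F C)$ have the same generators and the same quadratic relations, the natural surjection $q(\mathrm{gr}^F C)\to \mathrm{gr}^F C$ is then an isomorphism in degree $3$, so $\mathrm{gr}^F C$ has no defining relations in degree $3$. Theorem \ref{filtrationTrick} now gives that $\mathrm{gr}^F C$ is Koszul and therefore $C=A\tsr_{\t}B$ is Koszul.

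The main obstacle is the identification $\mathrm{gr}^F C\cong A\tsr_{\widehat R}\widetilde B$, and within it the leading-term analysis of the relations $dx-\t(d\tsr x)$ for $d\in B_1''$. The delicate point is precisely the vanishing in $\widetilde B$ of the weight-$(1,0,1)$ component of $\t_B(d\tsr x)$, i.e.\ of its $\mu_B(B_1'\tsr B_1'')$-part: only then does the commutation relation $dx=R(d\tsr x)$, rather than a spurious higher-weight relation, survive in the associated graded. This is exactly the compatibility that makes $F$ a genuine $\G$-valued filtration on $C$, and establishing it from separability together with the twisting-map axioms (and hypotheses (1) and (2)) is the heart of the matter; the remaining steps are routine given Lemmas \ref{firstone}--\ref{extendingR} and Theorem \ref{oneSidedKoszul}.
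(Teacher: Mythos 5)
Your proposal follows the paper's strategy step for step --- the same $\N^3$-valued filtration $F$, the same reduction to $\widetilde{B}$ via $R=\pi_{A_1\tsr B_1}\t|_{B_1\tsr A_1}$ and Lemmas \ref{firstone}, \ref{secondone}, \ref{extendingR}, the same identification of the associated graded algebra with a twisted tensor product over $\widetilde{B}$, and the same closing appeal to Theorem \ref{filtrationTrick} --- so the question is whether the details are complete. They are not, and the most serious gap is the one you flag yourself in your final paragraph and then defer. For $d\in B_1''$, $x\in A_1$, the $\t$-relation $d\tsr x-\t(d\tsr x)$ has weight-homogeneous components in weights $(0,1,1)$ (from $d\tsr x$ and $R(d\tsr x)$), $(0,0,2)$ (from the $\mu_B(B_1''\tsr B_1'')$-part of $\t_B(d\tsr x)$), and $(1,0,1)$ (from the $\mu_B(B_1'\tsr B_1'')$-part). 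Since $(1,0,1)>(0,1,1)$ in the lexicographic order, whenever that last part is nonzero it is the \emph{leading} term, so ${\rm gr}^F(A\tsr_\t B)$ acquires the relation ``that part $=0$'' in place of the commutation relation $d\tsr x - R(d\tsr x)$, and the claimed isomorphism ${\rm gr}^F C\cong A\tsr_{\widehat R}\widetilde B$ breaks. This is not routine bookkeeping; it genuinely fails for some separable maps. Concretely, take $A=\k[x]$, $B=\k\la u,d\ra$, $\t(u\tsr x)=x\tsr u$, $\t(d\tsr x)=x\tsr d+1_A\tsr ud$: this is separable and extends to a graded twisting map by Theorem \ref{extendSepTau}, hypotheses (1) and (2) hold vacuously ($I_2=J_2=0$), yet the degree-two relations of ${\rm gr}^F C$ are $ux-xu$ and $ud$, not $ux-xu$ and $dx-xd$. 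A proof cannot end by announcing its ``heart of the matter'' and omitting it; one must either prove the needed vanishing (which requires extra input) or reroute around its failure, e.g.\ by a dimension count showing Koszulity is unaffected. In fairness, the paper's own proof asserts ${\rm gr}^F(I_\t)={\rm span}_{\k}\{b\tsr a-R(b\tsr a)\}$ at the same spot without comment, so you have isolated a real subtlety --- but isolating it is not resolving it.

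There are two further gaps, both repaired by steps the paper takes and you skip. First, Lemma \ref{extendingR} is stated for \emph{quadratic} algebras, yet you apply it to $\widetilde{B}$ without knowing $\widetilde{B}$ is quadratic; indeed elsewhere you carry $q(\widetilde B)\neq\widetilde B$ as a live possibility, which is incompatible with that application. The paper's first move is to apply Theorem \ref{filtrationTrick} to $B$ with the filtration $F^B$ --- its hypotheses are exactly your hypotheses on $\widetilde B$ --- concluding that $\widetilde B$ is quadratic and Koszul; this legitimizes Lemma \ref{extendingR}, collapses the $q(\widetilde B)$-versus-$\widetilde B$ bookkeeping, and renders your degree-3 Hilbert series step unnecessary. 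Second, leading terms of defining relations generate in general only a subideal of the relation ideal of an associated graded algebra, so even granting your leading-term computation you obtain only a surjection $A\tsr_{\widehat R}\widetilde B\onto{\rm gr}^F C$, not an isomorphism; the paper converts this into an isomorphism by the Hilbert series comparison ($h_{{\rm gr}^F C}=h_{A\tsr_\t B}=h_Ah_B=h_Ah_{\widetilde B}$), a step you omit --- indeed you derive those Hilbert series \emph{from} the unproved isomorphism. One point in your favor: invoking Theorem \ref{oneSidedKoszul}(2) (purity implies one-sidedness) instead of the paper's Theorem \ref{pure graded Koszul} avoids the invertibility hypothesis of the latter, which is never verified for $\widetilde R$; that substitution is an improvement worth keeping.
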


\begin{proof} 
Assume that the quadratic part of $\widetilde{B}$ is Koszul and $\widetilde{B}$ has no defining relations in degree 3. Then Theorem \ref{filtrationTrick} implies that $\widetilde{B}$ is Koszul (and $B$ is Koszul).

Let $R = \pi_{A_1\tsr B_1}\circ \t|_{B_1\tsr A_1}$. First we show that $R$ extends uniquely to a graded twisting map $\widetilde{R}:\widetilde{B} \tsr A\to A\tsr \widetilde{B}$. 
Since $\widetilde{B}_1=B_1$, we have
$$(1\tsr R)(R\tsr 1)(\widetilde{B}_1\tsr I_2)\subseteq I_2\tsr \widetilde{B}_1$$ by Lemma \ref{firstone}.
We define 
\begin{align*}
J_{(0,0,2)} &= J_2 \cap (F_{(0,0,1)}\tsr F_{(0,0,1)}) = J_2 \cap (B_1''\tsr B_1'')\\
J_{(1,0,1)} &= J_2 \cap (F_{(0,0,1)}\tsr F_{(1,0,0)} \oplus F_{(1,0,0)}\tsr F_{(0,0,1)}) \\
J_{(2,0,0)} &= J_2 \cap (F_{(1,0,0)}\tsr F_{(1,0,0)}) = J_2.
\end{align*}

The relation space of $\widetilde{B}$ is $$J_{(2,0,0)}/J_{(1,0,1)}\oplus J_{(1,0,1)}/J_{(0,0,2)}\oplus J_{(0,0,2)}.$$ 
Since $J_{(0,0,2)}\subset B_1''\tsr B_1''$, Lemma \ref{secondone} shows 
\begin{align*}
(R\tsr 1)(1\tsr R)(J_{(0,0,2)}\tsr A_1)&\subseteq A_1\tsr J_{(0,0,2)},\\
(R\tsr 1)(1\tsr R)(J_{(1,0,1)}\tsr A_1)&\subseteq A_1\tsr J_{(1,0,1)},\\
(R\tsr 1)(1\tsr R)(J_{(2,0,0)}\tsr A_1)&\subseteq A_1\tsr J_{(2,0,0)},
\end{align*}
from which it follows that
$$(R\tsr 1)(1\tsr R)((J_{(1,0,1)}/J_{(0,0,2)})\tsr A_1)\subseteq A_1\tsr (J_{(1,0,1)}/J_{(0,0,2)}),$$
and
$$(R\tsr 1)(1\tsr R)((J_{(2,0,0)}/J_{(1,0,1)})\tsr A_1)\subseteq A_1\tsr (J_{(2,0,0)}/J_{(1,0,1)}).$$
By Lemma \ref{extendingR}, $R$ extends uniquely to $\widetilde{R}$ as desired.

Next, we show that $C=A\tsr_{\widetilde{R}} \widetilde{B}$ is isomorphic to $Q=q({\rm gr}^F(A\tsr_{\t}B))$. To see this, observe that $Q$ is quadratic and its space of relations is 
$${\rm gr}^F(I_2+J_2+I_{\t})={\rm gr}^F(I_2)+{\rm gr}^F(J_2)+{\rm gr}^F(I_{\t})$$
where
\begin{align*}
{\rm gr}^F(I_2) &= I_2,\\
{\rm gr}^F(J_2) &=J_{(2,0,0)}/J_{(1,0,1)}\oplus J_{(1,0,1)}/J_{(0,0,2)}\oplus J_{(0,0,2)},\\
{\rm gr}^F(I_{\t})&={\rm span}_{\k}\{ b\tsr a - R(b\tsr a)\ : a\in A_1, b\in B_1\}.
\end{align*}
By Proposition \ref{presentation},  $C\cong Q$. 

Finally, since the Hilbert series of $A\tsr_{\t} B$ is the same as that of $C$, and hence that of $Q$, we conclude that $Q\cong {\rm gr}^F(A\tsr_{\t} B)$. If $A$ is Koszul, then Theorem \ref{pure graded Koszul} implies that $C$ is Koszul. The result now follows from Theorem \ref{filtrationTrick}.

\end{proof}

\begin{cor}
\label{free separable Koszul} 
Let $A$ be a Koszul algebra with quadratic relation space $I_2$, let $B$ be a free algebra, and suppose $\t: B \tsr A \to A \tsr B$ is a separable graded twisting map. Assume that $\pi_{A_0\tsr A_2\tsr B_1}(1\tsr \t)(\t_B\tsr 1)(B_1\tsr I_2)=0$. Then $A \tsr_{\t} B$ is Koszul.
\end{cor}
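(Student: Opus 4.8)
The plan is to obtain this corollary directly from Theorem \ref{separableKoszul} by checking that, when $B$ is free, every hypothesis of that theorem is automatically satisfied. Since $A$ is Koszul it is in particular quadratic, and a free algebra $B = T(B_1)$ is quadratic, so the standing hypotheses of Theorem \ref{separableKoszul} hold, and condition (1) is assumed in the corollary. First I would record that freeness of $B$ forces $J_2 = \ker(\mu_B|_{B_1 \tsr B_1}) = 0$. Consequently condition (2) of the theorem, $\pi_{A_1 \tsr B_2 \tsr B_0}(\t \tsr 1)(1 \tsr \t_A)(J_2 \tsr A_1) = 0$, holds trivially because its input space $J_2 \tsr A_1$ is zero.

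It then remains to verify the two hypotheses on $\widetilde{B} = {\rm gr}^{F^B}(B)$: that $q(\widetilde{B})$ is Koszul and that $\widetilde{B}$ has no defining relations in degree $3$. I would establish both at once by showing that $\widetilde{B}$ is itself free. Passing to the associated graded algebra of a $\G$-valued filtration preserves the dimension of each total-degree component, so $h_{\widetilde{B}}(t) = h_B(t) = (1 - dt)^{-1}$, where $d = \dim_{\k} B_1$. Moreover $F^B$ is one-generated, being the restriction of the one-generated filtration $F$, so $\widetilde{B}$ is one-generated and there is a graded surjection $T(B_1) \onto \widetilde{B}$. Since $T(B_1)$ also has Hilbert series $(1 - dt)^{-1}$, equality of Hilbert series forces this surjection to be an isomorphism, whence $\widetilde{B} \cong T(B_1)$ is free. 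As a cross-check on the quadratic part, note that in the proof of Theorem \ref{separableKoszul} the quadratic relations of $\widetilde{B}$ are the summands $J_{(2,0,0)}/J_{(1,0,1)} \oplus J_{(1,0,1)}/J_{(0,0,2)} \oplus J_{(0,0,2)}$, all intersections of $J_2 = 0$ with subspaces, hence all zero.

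With $\widetilde{B}$ free, we have $q(\widetilde{B}) = \widetilde{B} = T(B_1)$, which is a one-generated free algebra and therefore Koszul by fact (2) recorded in the introduction; and being free, $\widetilde{B}$ has no defining relations in any degree, in particular none in degree $3$. All hypotheses of Theorem \ref{separableKoszul} are now in place, so that theorem yields the Koszulity of $A \tsr_{\t} B$. The only step requiring genuine care is the claim that $\widetilde{B}$ is free rather than merely free of quadratic relations; this is exactly what the Hilbert-series-plus-one-generated argument secures, and it is the crux of reducing the general theorem to this corollary.
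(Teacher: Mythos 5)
Your proof is correct and follows exactly the route the paper intends: the corollary is stated as an immediate consequence of Theorem \ref{separableKoszul}, and you verify its hypotheses when $B$ is free --- freeness gives $J_2=0$, which makes condition (2) vacuous, and your one-generatedness-plus-Hilbert-series argument shows $\widetilde{B}\cong T(B_1)$ is free, so $q(\widetilde{B})$ is Koszul and $\widetilde{B}$ has no degree-3 relations. The paper supplies no details for this step, and your filled-in verification (in particular recognizing that the real point is freeness of $\widetilde{B}$ itself, not merely vanishing of its quadratic relations) is the natural and correct way to make the "immediate" claim rigorous.
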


\section{Graded twisted tensor products on two generators}
\label{abc}


Let $A=\k[x]$ and $B=\k[y]$ be free $\k$-algebras and let $\tau:B\tsr A\to A\tsr B$ be a graded twisting map. Define $\t_B: B \tsr A \to A_0 \tsr B$ by $\t_B = \pi_{A_0 \tsr B} \ \t$. Suppressing the tensors, write $\tau(yx)=ax^2+bxy+cy^2$ for $a,b,c\in \k$. 

Let $s_i^d$ denote the coefficient of $y^{d}$ when $\tau(y^ix^{d-i})$ is written in terms of monomials $x^py^q$. Then for all $1\le i\le d-1$, applying the first identity from Remark \ref{AltTwistingDef} to the factorization $y^ix^{d-i+1}=y^ix^{d-i}x$ shows that 
$$\t(y^ix^{d-i+1}) - s_i^d\t(y^dx)= (\mu_A\tsr 1_B)(1_A\tsr \t)((\t-\t_B)(y^ix^{d-i})\tsr x) \in A_+\tsr B.$$
Applying the second identity from Remark \ref{AltTwistingDef} to the factorization $y^dx=y^{d-1}yx$ shows
$$\tau(y^dx) - a\tau(y^{d-1}x^2) - (bs_{d-1}^d+c)y^{d+1} \in A_+\tsr B.$$

Thus 
\begin{align*}
&(1-as_{d-1}^d)\tau(y^dx) - (bs_{d-1}^d+c)y^{d+1} \\
&=\tau(y^dx) - a\tau(y^{d-1}x^2) - (bs_{d-1}^d+c)y^{d+1} + a\left(\t(y^{d-1}x^2) - s_{d-1}^d\t(y^dx)\right)
\end{align*}
is an element of $A_+\tsr B$.

\begin{lemma}
\label{abcLemma}
If $1-ac\neq 0$, then $1-as_{d-1}^d \ne 0$ for all $d \geq 2$.
\end{lemma}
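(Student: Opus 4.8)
The plan is to project the displayed identity immediately preceding the lemma onto the $A_0\tsr B$ summand, turning it into a scalar recurrence among the coefficients $s_i^d$, and then to rule out the vanishing of $1-as_{d-1}^d$ by a descending induction. Since the element
$$(1-as_{d-1}^d)\tau(y^dx) - (bs_{d-1}^d+c)y^{d+1}$$
lies in $A_+\tsr B$, its image under the projection $\pi_{A_0\tsr B}$ is zero. Computing that image directly and using that the coefficient of $y^{d+1}$ in $\tau(y^dx)$ is, by definition, $s_d^{d+1}$, I obtain for every $d\ge 2$ the scalar recurrence
$$(1-as_{d-1}^d)\,s_d^{d+1} = b\,s_{d-1}^d + c.\qquad(\ast)$$
I would also record the base value $s_1^2=c$ (the coefficient of $y^2$ in $\tau(yx)=ax^2+bxy+cy^2$), so that the hypothesis reads $1-as_1^2=1-ac\neq 0$.

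If $a=0$ then $1-as_{d-1}^d=1\neq 0$ for all $d$ and there is nothing to prove, so I would assume $a\neq 0$. It is worth noting that a naive forward induction does not close: $(\ast)$ exhibits $s_d^{d+1}$ as the image of $s_{d-1}^d$ under a fractional-linear transformation, and nothing local prevents the iterate from landing on the value $1/a$ at which $1-a(\cdot)$ vanishes. Instead I would argue by contradiction. Suppose $1-as_{d-1}^d=0$, equivalently $s_{d-1}^d=1/a$, for some $d\ge 2$, and choose the smallest such $d$; by the base value this smallest $d$ satisfies $d\ge 3$.

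For this $d$, evaluating $(\ast)$ at index $d$ gives $0=b s_{d-1}^d+c=b/a+c$, hence the rigid relation $b=-ac$. Now evaluate $(\ast)$ at index $d-1$, which is legitimate since $d-1\ge 2$: substituting $s_{d-1}^d=1/a$ and $b=-ac$ and clearing the denominator yields $1-ac=a(1-ac)\,s_{d-2}^{d-1}$. Because $1-ac\neq 0$, I may cancel it to conclude $a s_{d-2}^{d-1}=1$, i.e. $1-as_{d-2}^{d-1}=0$. Since $2\le d-1<d$, this contradicts the minimality of $d$. Therefore $1-as_{d-1}^d\neq 0$ for all $d\ge 2$.

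The main obstacle is exactly the breakdown of the obvious forward induction described above; the crux of the argument is the observation that $(\ast)$ at index $d$ forces $b=-ac$, which when fed back into $(\ast)$ at index $d-1$ propagates the obstruction one step downward, and this descent can only terminate by colliding with the standing hypothesis $1-ac\neq 0$.
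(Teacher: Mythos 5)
Your proof is correct and takes essentially the same approach as the paper: both argue by contradiction, observe that vanishing of $1-as_{d-1}^d$ forces $bs_{d-1}^d+c=0$ and hence $b=-ac$, and then combine the scalar recurrence coming from projecting the pre-lemma identity onto $A_0\tsr B$ with the hypothesis $1-ac\neq 0$ and the base value $s_1^2=c$. The only difference is bookkeeping: the paper runs a forward induction (under $b=-ac$) showing $s_{k-1}^k=c$ for all $k\geq 2$, which collides with $as_{d-1}^d=1$, whereas you take a minimal counterexample and push the vanishing condition down one step via your relation $(\ast)$ at index $d-1$.
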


\begin{proof}
Suppose, to the contrary, that $1-as_{d-1}^d=0$ for some $d \geq 3$. Then the calculation preceding the lemma implies that $bs_{d-1}^d+c=0$. Multiplying both sides of this equation by $a$ gives $b+ac=0$. The assumption $1-ac\neq 0$ implies $b+1\neq 0$. We claim that $s^k_{k-1} = c$ for all $k \geq 2$. 

By definition, $s_1^2=c$. Suppose $s_{k-1}^k=c$. Then we have
$$(1-ac)\tau(y^kx) - (bc+c)y^{k+1} \in A_+\tsr B$$
Since $1-ac\neq 0$, it follows that $s_k^{k+1}=c(b+1)/(1-ac)=c(b+1)/(1+b)=c$, and our claim follows by induction. We have reached a contradiction since $s^d_{d-1} = c$, so $1-ac = 0$, contrary to assumption.
\end{proof}

We are ready to prove that under generic conditions on $a$, $b$, and $c$, $A\tsr_{\t} B\cong \k\la x,y \ra/\la yx-ax^2-bxy-cy^2\ra$.

\begin{thm}
\label{abcTheorem}
If $1-ac\neq 0$, then $\t$ has the unique extension property and $A\tsr_{\t} B$ is $\t$-quadratic.
\end{thm}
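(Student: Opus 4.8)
The plan is to establish the unique extension property for $\t$; the assertion that $A\tsr_\t B$ is $\t$-quadratic then follows at once from Theorem \ref{UEPquadratic}. Fix $n\ge 3$ and let $\t':B\tsr A\to A\tsr B$ be any linear map that is graded twisting to degree $n$ and satisfies $\t'_i=\t_i$ for all $i<n$; I must show $\t_n=\t'_n$. The unital conditions pin down both maps on the monomials $y^ix^j$ with $i=0$ or $j=0$, so it suffices to treat $i,j\ge 1$ with $i+j=n$. For these indices set $\Delta^j_i=\t(y^ix^j)-\t'(y^ix^j)$.

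First I would rerun the first-identity computation preceding Lemma \ref{abcLemma} for both maps. Applying the first identity of Remark \ref{AltTwistingDef} to the factorization $y^ix^j=y^i x^{j-1}\cdot x$ shows, for $j\ge 2$, that $\t(y^ix^j)=s_i^{n-1}\,\t(y^{n-1}x)+E$, where $E\in A_+\tsr B$ is assembled from the values of $\t$ in degrees $<n$ (the only degree-$n$ term produced is $\t(y^{n-1}x)$, with coefficient the scalar $s_i^{n-1}$, a coefficient of $\t(y^ix^{j-1})$). Since $\t$ and $\t'$ agree below degree $n$, the scalar $s_i^{n-1}$ and the term $E$ are the same for both, so subtracting gives $\Delta^j_i=s_i^{n-1}\,\Delta^1_{n-1}$ for every $j\ge 2$. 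Hence every degree-$n$ discrepancy is a scalar multiple of the single vector $\Delta:=\Delta^1_{n-1}=\t(y^{n-1}x)-\t'(y^{n-1}x)$, and the whole problem reduces to proving $\Delta=0$.

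The crucial step is to produce a second, independent relation for $\Delta$ using the second identity of Remark \ref{AltTwistingDef} applied to the factorization $y^{n-1}x=y^{n-2}\cdot(yx)$. Because $\t(yx)=ax^2\tsr 1+bx\tsr y+c\,1\tsr y^2$, expanding this identity yields $\t(y^{n-1}x)=a\,\t(y^{n-2}x^2)+D$, where $D$ is built entirely from $\t(y^{n-2}x)$ and therefore depends only on degrees $<n$; the essential feature is that the unique degree-$n$ term is $a\,\t(y^{n-2}x^2)$, carrying the coefficient $a$ exactly. The same identity holds for $\t'$ with the same $D$, so $\Delta=a\,\Delta^2_{n-2}$. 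Combining this with the first-identity relation $\Delta^2_{n-2}=s_{n-2}^{n-1}\,\Delta$ obtained above gives $(1-a\,s_{n-2}^{n-1})\,\Delta=0$.

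Finally, Lemma \ref{abcLemma} (applied with $d=n-1\ge 2$) gives $1-a\,s_{n-2}^{n-1}\ne 0$, forcing $\Delta=0$; then $\Delta^j_i=s_i^{n-1}\Delta=0$ for all $j\ge 2$ as well, so $\t_n=\t'_n$. As $n\ge 3$ was arbitrary, $\t$ has the unique extension property (Definition \ref{UEP definition}), and $A\tsr_\t B$ is $\t$-quadratic by Theorem \ref{UEPquadratic}. I expect the one real subtlety to be the choice of factorization in the second identity: the more obvious split $y^{n-1}=y\cdot y^{n-2}$ leads instead to a factor of the form $1-r\,s$ for certain other coefficients $r,s$, which can vanish, whereas splitting off the rightmost $y$ isolates the $ax^2$ term of $\t(yx)$ and produces exactly the quantity $1-a\,s_{n-2}^{n-1}$ that Lemma \ref{abcLemma} controls.
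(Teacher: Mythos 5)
Your proof is correct and takes essentially the same route as the paper's: both use the first identity of Remark \ref{AltTwistingDef} on the factorization $y^ix^{j-1}\cdot x$ to express every degree-$n$ discrepancy as $s_i^{n-1}$ times the single discrepancy at $y^{n-1}x$, then apply the second identity to $y^{n-2}\cdot yx$ and invoke Lemma \ref{abcLemma} to conclude that $(1-as_{n-2}^{n-1})$ kills that discrepancy, finishing with Theorem \ref{UEPquadratic}. The only difference is organizational (your explicit $\Delta^j_i$ bookkeeping versus the paper's direct chain of equalities $(\ast)$).
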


\begin{proof} 
By Theorem \ref{UEPquadratic}, it suffices to prove $\t$ has the unique extension property.
Fix $d\ge 2$ and assume $\t':B\tsr A\to A\tsr B$ is graded twisting to degree $d+1$ and that $\t_i=\t_i'$ for all $i\le d$. Then for every $k\le d$, the coefficient of $y^{k}$ when $\t'(y^ix^{k-i})$ is written in terms of monomials $x^py^q$ is $s_i^k$, and for all $1\le i\le d-1$,
\begin{align*}
\t(y^ix^{d-i+1}) - s_i^d\t(y^dx) &= (\mu_A\tsr 1_B)(1_A\tsr \t)((\t-\t_B)(y^ix^{d-i})\tsr x)\\
&= (\mu_A\tsr 1_B)(1_A\tsr \t')((\t-\t_B)(y^ix^{d-i})\tsr x)\\
&= (\mu_A\tsr 1_B)(1_A\tsr \t')((\t'-\t'_B)(y^ix^{d-i})\tsr x)\\
&= \t'(y^ix^{d-i+1}) - s_i^d\t'(y^dx).\qquad (\ast)
\end{align*}
The second and third equalities follow from the assumption that $\t_d=\t'_d$, and the fourth follows from the assumption that $\t'$ is graded twisting to degree $d+1$. Similarly, 

$$\t(y^dx) - a\t(y^{d-1}x^2) - (bs_{d-1}^d+c)y^{d+1}=\t'(y^dx) - a\t'(y^{d-1}x^2) - (bs_{d-1}^d+c)y^{d+1}$$

hence 

$$(1-as_{d-1}^d)\tau(y^dx) - (bs_{d-1}^d+c)y^{d+1} =(1-as_{d-1}^d)\t'(y^dx) - (bs_{d-1}^d+c)y^{d+1} $$

and

$$(1-as_{d-1}^d)\tau(y^dx)  =(1-as_{d-1}^d)\t'(y^dx).  $$

%
%

By Lemma \ref{abcLemma}, $1-as_{d-1}^d\neq 0$, so $\t(y^dx) = \t'(y^dx)$, and hence by $(\ast)$, $\t'(y^ix^{d-i+1})=\t(y^ix^{d-i+1})$ for all $1\le i\le d$. Together with the unital conditions, this shows $\t=\t'$ on a basis for $(B\tsr A)_{d+1}$.

\end{proof}
%

Now we investigate the existence of twisting maps for which $b=0$ and $ac \ne 0$. Assuming that there exists $\lambda \in \k$ such that $\lambda^2 = ac^{-1}$ we use Proposition \ref{isomorphism type of ttp} to assume that, without loss of generality, $\t(y \tsr x) = x^2 \tsr 1_B + c 1_A \tsr y^2,$ where $c \in \k$. Certain polynomials arise which we define first.

\begin{defn}
Define a sequence of polynomials $S_n(t) \in \k[t]$ by $S_1(t) = 1$, $S_2(t) = 1$, and $S_n(t) = S_{n-1}(t) - t S_{n-2}(t)$ for all $n \geq 1$.
\end{defn}

It is straightforward to prove that $S_n(t)$ has an explicit form: $$S_n(t) = \sum_{j=0}^r (-1)^j \binom{n-1-j}{j} t^j, \text{ where } r = \begin{cases} \frac{n-2}{2} &\text{ if } n \text{ is even} \\  \frac{n-1}{2} &\text{ if } n \text{ is odd}. \end{cases}$$ Moreover, these polynomials are interestingly related to the Catalan numbers \cite{EO}. It would be remarkable to uncover any deeper connection here. 


\begin{lemma}
\label{recursion}
For all $n \geq 2$ and $i, j \geq 1$,
\begin{enumerate}
\item $S_{i+1}(t) S_{i+j}(t) = t^i S_j(t) + S_{i+j+1}(t) S_i(t)$,
\item $S_n^2(t)-t^{n-1} = S_{n+1}(t) S_{n-1}(t)$.
\end{enumerate}
\end{lemma}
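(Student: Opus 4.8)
The plan is to leverage the linear two-term recurrence satisfied by the $S_n$ to obtain a Binet-type closed form, from which both identities fall out by routine symmetric-function computations. The recurrence $S_n = S_{n-1} - tS_{n-2}$ has characteristic polynomial $\lambda^2 - \lambda + t$; denote its two roots by $\alpha$ and $\beta$, so that $\alpha + \beta = 1$ and $\alpha\beta = t$. The sequence $(\alpha^n - \beta^n)/(\alpha - \beta)$ then satisfies the same recurrence and equals $1$ at both $n = 1$ and $n = 2$, so it agrees with $S_n$ for all $n \geq 1$. Thus $S_n = (\alpha^n - \beta^n)/(\alpha - \beta)$, which (after extending the sequence by $S_0 = 0$, as forced by the recurrence) is the key identity I would establish first.

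Because the coefficients of each $S_n$ are integers, it suffices to prove both identities in $\Z[t]$ and then push them forward along the canonical ring homomorphism $\Z[t] \to \k[t]$; this reduction removes any dependence on the characteristic of $\k$. I would therefore carry out the argument inside the quadratic extension $K = \C(t)(\sqrt{1-4t})$ of the rational function field, where I may take $\alpha = (1 + \sqrt{1-4t})/2$ and $\beta = (1 - \sqrt{1-4t})/2$. Here $\alpha \neq \beta$ because $1 - 4t$ is a nonconstant squarefree polynomial and hence not a square in $\C(t)$; this is exactly what licenses division by $\alpha - \beta$.

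With the closed form available, identity (2) reduces to expanding $(\alpha^n - \beta^n)^2 - (\alpha^{n+1} - \beta^{n+1})(\alpha^{n-1} - \beta^{n-1})$, which collapses to $(\alpha\beta)^{n-1}(\alpha - \beta)^2 = t^{n-1}(\alpha - \beta)^2$; dividing by $(\alpha - \beta)^2$ gives $S_n^2 - S_{n+1}S_{n-1} = t^{n-1}$. Identity (1) is entirely analogous: expanding $(\alpha^{i+1} - \beta^{i+1})(\alpha^{i+j} - \beta^{i+j}) - (\alpha^{i+j+1} - \beta^{i+j+1})(\alpha^i - \beta^i)$ and extracting the common factor $(\alpha\beta)^i = t^i$ leaves $t^i(\alpha - \beta)(\alpha^j - \beta^j)$, and division by $(\alpha - \beta)^2$ yields $t^i S_j$. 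Both are short, purely formal manipulations.

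The only real subtlety, and the step I would treat most carefully, is the passage to the extension field together with the verification that $\alpha \neq \beta$, since the division by $\alpha - \beta$ is indispensable; the preliminary reduction to $\Z[t]$ is what keeps this clean and sidesteps the degenerate discriminant in characteristic $2$. For readers who prefer to avoid field extensions altogether, both identities also yield to direct induction: identity (2) follows by induction on $n$, rewriting $S_{n+1}^2 - S_{n+2}S_n$ via $S_{n+2} = S_{n+1} - tS_n$ and $S_{n+1} - S_n = -tS_{n-1}$ as $t(S_n^2 - S_{n+1}S_{n-1})$; and identity (1), regarded as an identity in $j$ with $i$ fixed, has both sides obeying the recurrence $X_j = X_{j-1} - tX_{j-2}$, so it collapses to the two base cases $j = 1$ (which is precisely (2)) and $j = 2$ (a Cassini-type relation dispatched by a short induction on $i$).
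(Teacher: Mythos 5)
Your proof is correct, and your primary argument takes a genuinely different route from the paper's. The paper proves (1) by a double induction on the recursive definition of $S_n(t)$ (with details left to the reader) and then obtains (2) as the specialization $i=n-1$, $j=1$ of (1). You instead derive the Binet closed form $S_n=(\alpha^n-\beta^n)/(\alpha-\beta)$ over $K=\C(t)(\sqrt{1-4t})$, where $\alpha+\beta=1$ and $\alpha\beta=t$, so that both identities collapse to short symmetric-function expansions; the passage through $\Z[t]\to\k[t]$ correctly removes any dependence on the characteristic of $\k$, and the essential division by $\alpha-\beta$ is licensed by your observation that $1-4t$ is not a square in $\C(t)$. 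What this buys is transparency: (1) and (2) are exposed as classical d'Ocagne- and Catalan-type identities for a Fibonacci-like polynomial sequence, each verified in two lines, at the cost of the field-extension scaffolding and the reduction to integer coefficients. Your fallback induction is essentially the paper's elementary route but organized in reverse: you prove (2) first by induction on $n$, then get (1) by noting that both sides satisfy $X_j=X_{j-1}-tX_{j-2}$ in $j$ and checking the base cases $j=1$ (which is (2) with $n=i+1$) and $j=2$ (a short induction on $i$); there is no circularity in this ordering, whereas the paper proves (1) outright and reads off (2) as a corollary. Both of your arguments check out.
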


The proof of (1) is a straightforward double induction using the recursive definition of $S_n(t)$ and is left to the reader. Notice that (2) is a special case of (1).

For $c \in \k$, let $S_n = S_n(c)$.

\begin{thm} Suppose that $c \in \k$ is not a root of $S_n(t)$ for all $n \geq 1$. Then the linear map $\t: B \tsr A \to A \tsr B$ defined by insisting that $\t$ satisfy the unital twisting conditions and $$\t(y^i \tsr x^j) = S_{i+j}^{-1}(S_j x^{i+j} \tsr 1_B + c^j S_i 1_A \tsr y^{i+j}) \text{ for all } i, j \geq 1$$ is a graded twisting map. Moreover, $\t$ has the unique extension property.
\end{thm}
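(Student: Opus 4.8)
The plan is to establish that the formula defines a graded twisting map by induction on total degree, using Theorem \ref{simpler extension condition} for the inductive step, and then to deduce the unique extension property directly from Theorem \ref{abcTheorem}. First I would observe that the hypothesis that $c$ is not a root of any $S_n(t)$ guarantees $S_n = S_n(c) \neq 0$ for all $n$, so the scalars $S_{i+j}^{-1}$ appearing in the defining formula are well-defined and $\t$ is a genuine graded $\k$-linear map. Setting $i = j = 1$ (and using $S_1 = S_2 = 1$) recovers $\t(y \tsr x) = x^2 \tsr 1_B + c\, 1_A \tsr y^2$, which matches the normalization $a = 1$, $b = 0$ of Section \ref{abc}.

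For the twisting-map property I would induct on $n$, proving that $\t|_{(B \tsr A)_{\le n}}$ is graded twisting to degree $n$. The base case $n = 2$ follows from the unital twisting conditions together with the single value $\t(y \tsr x)$, and is routine. For the inductive step, assume $\t$ is twisting to degree $n$ and apply Theorem \ref{simpler extension condition} to the restriction $\t|_{(B \tsr A)_{\le n+1}}$: hypothesis (1) of that theorem is precisely the inductive assumption, the unital conditions hold by construction, and it therefore suffices to verify conditions (2) and (3) on the degree-$(n+1)$ pieces $(B \tsr A \tsr A_1)_{n+1}$ and $(B_1 \tsr B \tsr A)_{n+1}$. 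These are spanned by the tensors $y^i \tsr x^j \tsr x$ and $y \tsr y^i \tsr x^j$ with $i + j = n$, respectively.

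The heart of the proof is the following two computations, both reducing to Lemma \ref{recursion}(1). Expanding $(\mu_A \tsr 1_B)(1_A \tsr \t)(\t \tsr 1_A)(y^i \tsr x^j \tsr x)$ via the defining formula and the unital value $\t(1_B \tsr x) = x \tsr 1_B$, then comparing with $\t(y^i \tsr x^{j+1})$, one finds that the $1_A \tsr y^{i+j+1}$ coefficients agree automatically, while equality of the $x^{i+j+1} \tsr 1_B$ coefficients, after clearing the denominators $S_{i+j}^{-1}$ and $S_{i+j+1}^{-1}$, is exactly $S_{j+1} S_{i+j} = c^j S_i + S_{i+j+1} S_j$; this verifies condition (2). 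Symmetrically, expanding $(1_A \tsr \mu_B)(\t \tsr 1_B)(1_B \tsr \t)(y \tsr y^i \tsr x^j)$ and comparing with $\t(y^{i+1} \tsr x^j)$ shows the $x^{i+j+1} \tsr 1_B$ coefficients agree automatically, while the $1_A \tsr y^{i+j+1}$ coefficients reduce to $S_{i+1} S_{i+j} = c^i S_j + S_{i+j+1} S_i$; this verifies condition (3). Both identities are instances of Lemma \ref{recursion}(1) at $t = c$ (condition (2) being the statement after interchanging the roles of $i$ and $j$, condition (3) the statement verbatim), so the induction goes through and $\t$ is a graded twisting map.

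Finally, for the unique extension property, I would note that with $a = 1$ we have $1 - ac = 1 - c = S_3(c)$, since $S_3(t) = S_2(t) - t\,S_1(t) = 1 - t$. The hypothesis that $c$ is not a root of $S_3(t)$ therefore gives $1 - ac \neq 0$, and Theorem \ref{abcTheorem} applies directly to the graded twisting map $\t$ just constructed, yielding that $\t$ has the unique extension property (and, as a byproduct, that $A \tsr_{\t} B$ is $\t$-quadratic). I expect the main obstacle to be purely organizational: correctly tracking, in the two tensor computations, which tensor factor each of $\mu_A$, $\mu_B$, $1_A$, $1_B$, and $\t$ acts upon, and inserting the unital values $\t(1_B \tsr x) = x \tsr 1_B$ and $\t(y \tsr 1_A) = 1_A \tsr y$ at the correct stages. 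Once the computations are laid out carefully, each collapses to a single application of Lemma \ref{recursion}(1), so no genuinely new estimate or construction is required beyond the recursion already established for the polynomials $S_n$.
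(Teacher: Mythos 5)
Your proposal is correct, and its two halves merit separate comment. For the twisting-map property you follow exactly the route the paper indicates but does not write out: induction on degree via Theorem \ref{simpler extension condition}, with conditions (2) and (3) collapsing to coefficient identities. Your bookkeeping is right — in the check of (2) the $1_A\tsr y^{i+j+1}$ coefficients agree automatically and the $x^{i+j+1}\tsr 1_B$ coefficients require $S_{j+1}S_{i+j}=c^jS_i+S_{i+j+1}S_j$ (Lemma \ref{recursion}(1) with $i$ and $j$ interchanged), while in the check of (3) the roles are reversed and the needed equality is $c^j$ times Lemma \ref{recursion}(1) verbatim (the factor $c^j$ is harmless, and also covers the permitted case $c=0$, where $S_n(0)=1$ for all $n$). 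For the unique extension property, however, you take a genuinely different and shorter route. The paper gives a self-contained argument: from the twisting relations in degree $n+1$ it extracts two linear equations relating $\t'(y\tsr x^n)$ and $\t'(y^n\tsr x)$, eliminates the former, invokes Lemma \ref{recursion}(2) in the form $S_n^2-c^{n-1}=S_{n+1}S_{n-1}$ to conclude $\t'(y^n\tsr x)=\t(y^n\tsr x)$, and then propagates equality through degree $n+1$ by factoring $x^j=x^{j-1}\cdot x$. You instead observe that the constructed map has $a=1$, $b=0$, and $1-ac=1-c=S_3(c)\neq 0$, so Theorem \ref{abcTheorem} applies verbatim and yields the unique extension property (and $\t$-quadraticity) at once, making Lemma \ref{recursion}(2) unnecessary for this theorem. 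One caution worth recording: the introduction's summary of Theorem \ref{abcTheorem} adds the hypothesis ``$b\neq 0$ or $c\neq 0$,'' and $c=0$ is allowed here; your citation therefore rests on the Section 6 statement, whose only hypothesis is $1-ac\neq 0$ — and indeed its proof via Lemma \ref{abcLemma} goes through when $b=c=0$, so the appeal is sound. What the paper's longer computation buys is self-containedness and a template reused in the subsequent non-existence proposition; what your route buys is economy, by subsuming the uniqueness argument into the general theorem already proved.
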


\begin{proof}
It is straightforward to check, using Theorem \ref{simpler extension condition} and Lemma \ref{recursion} (1), that $\t$ is a graded twisting map. 

We show that $\t$ has the unique extension property. Let $n \geq 2$ and suppose that $\t': B \tsr A \to A \tsr B$ is a graded linear map that is twisting to degree $n+1$ and $\t'_{\leq n} = \t_{\leq n}$. We must show that $\t'_{n+1} = \t_{n+1}$.

Since $\t'$ is twisting in degree $n+1$ we have 

\begin{align*}
\t'(y \tsr x^n) &= S_n^{-1}(S_{n-1}x^{n+1} \tsr 1_B + c^{n-1} \t'(y^n \tsr x)) \\
\t'(y^n \tsr x) &= S_n^{-1}(\t'(y \tsr x^n) + cS_{n-1} 1_A \tsr y^{n+1}).
\end{align*}

Multiplying the first equation by $S_n^{-1}$ and adding the result to the second equation yields

$$(1-c^{n-1}S_n^{-2}) \t'(y^n \tsr x) = S_n^{-2}S_{n-1}(x^{n+1} \tsr 1_B + c S_n 1_A \tsr y^{n+1}),$$ then multiplying through by $S_n^2$ and using Lemma \ref{recursion} (2) we get: $$S_{n+1}S_{n-1} \t'(y^n \tsr x) = S_{n-1}(x^{n+1} \tsr 1_B + c S_n 1_A \tsr y^{n+1}).$$ It follows that $\t'(y^n \tsr x) = \t(y^n \tsr x)$. 

Next suppose that $i+j = n+1$ and $j \geq 2$. Then 
\begin{align*}
\t'(y^i \tsr x^j) &= (\m_A \tsr 1_B)(1_A \tsr \t')(\t' \tsr 1_A)(y^i \tsr x^{j-1} \tsr x) \\
&= (\m_A \tsr 1_B)(1_A \tsr \t')(S_n^{-1}(S_{j-1} x^n \tsr 1_B + c^{j-1} S_i 1_A \tsr y^n) \tsr x) \\
&= \t(y^i \tsr x^j) \text{ since } \t'(y^n \tsr x) = \t(y^n \tsr x).
\end{align*}

We conclude that $\t$ has the unique extension property.
\end{proof}

\begin{prop}
Let $c \in \k$ be a root of $S_n(t)$ for some $n \geq 3$. Then there is no graded twisting map $\t:B \tsr A \to A \tsr B$ such that $\t(y \tsr x) = x^2 \tsr 1_B + c 1_A \tsr y^2$.
\end{prop}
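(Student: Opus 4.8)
The plan is to argue by contradiction, tracking only the single sequence of values $v_m := \t(y^m \tsr x) \in (A \tsr B)_{m+1}$. Suppose such a graded twisting map $\t$ exists. Since $S_1 = S_2 = 1$, the hypothesis guarantees that $N := \min\{n : S_n = 0\}$ is a well-defined integer with $N \geq 3$, and that $S_n \neq 0$ for all $n < N$. I will show that the twisting conditions force an inconsistency in degree $N$.

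First I would isolate the one recursive identity that drives everything. Call an element of $(A \tsr B)_{m+1}$ \emph{extreme} if it lies in $\k(x^{m+1} \tsr 1_B) \oplus \k(1_A \tsr y^{m+1})$. The claim is: if $v_{m-1}$ is extreme, say $v_{m-1} = \gamma_{m-1} x^m \tsr 1_B + \delta_{m-1}\cdot 1_A \tsr y^m$, then
$$(1 - \delta_{m-1}) v_m = \gamma_{m-1}\, x^{m+1} \tsr 1_B + c \cdot 1_A \tsr y^{m+1}. \qquad (\dagger)$$
To prove $(\dagger)$ I would use the two identities of Remark \ref{AltTwistingDef}. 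Applying the second identity to the factorization $y^m = y^{m-1} y$, together with the unital conditions and $\t(y \tsr x) = x^2 \tsr 1_B + c\cdot 1_A \tsr y^2$, gives $v_m = \t(y^{m-1} \tsr x^2) + c \cdot 1_A \tsr y^{m+1}$. Applying the first identity to $x^2 = x\cdot x$ gives $\t(y^{m-1} \tsr x^2) = (\mu_A \tsr 1_B)(1_A \tsr \t)(v_{m-1} \tsr x)$; expanding this with $v_{m-1}$ extreme and invoking the elementary identity $(\mu_A \tsr 1_B)(1_A \tsr v_m) = v_m$ yields $\t(y^{m-1} \tsr x^2) = \gamma_{m-1} x^{m+1} \tsr 1_B + \delta_{m-1} v_m$. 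Substituting and collecting terms produces $(\dagger)$.

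The main obstacle is precisely the circularity that this identity sidesteps: the natural computation of $\t(y^{m-1} \tsr x^2)$ reintroduces $v_m$, so one cannot solve for $v_m$ outright. The key observation is that the term in which $v_m$ reappears is simply $\delta_{m-1} v_m$, with no further application of $\t$, because $(\mu_A \tsr 1_B)(1_A \tsr v_m) = v_m$ holds for \emph{any} element $v_m$ (it uses only $\mu_A(1_A \tsr x^p) = x^p$). This is what makes $(\dagger)$ a genuine scalar recursion rather than an unsolvable circular one, and crucially it requires no invertibility of any $S_k$.

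With $(\dagger)$ in hand the endgame is a short induction. For $1 \leq m \leq N-2$ I would prove $v_m = S_{m+1}^{-1}(x^{m+1} \tsr 1_B + c S_m \cdot 1_A \tsr y^{m+1})$; the base case $m=1$ is the defining value of $\t$ (using $S_1 = S_2 = 1$), and the inductive step feeds $\gamma_{m-1} = 1/S_m$ and $\delta_{m-1} = cS_{m-1}/S_m$ into $(\dagger)$, so that $1 - \delta_{m-1} = S_{m+1}/S_m$; since $S_{m+1} \neq 0$ for $m+1 \leq N-1$, one may divide to obtain the stated form. In particular $v_{N-2}$ is extreme with $\gamma_{N-2} = 1/S_{N-1}$ and $1 - \delta_{N-2} = S_N/S_{N-1} = 0$. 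Finally I would apply $(\dagger)$ once more at $m = N-1$: the left-hand side vanishes, leaving $0 = (1/S_{N-1})\, x^N \tsr 1_B + c \cdot 1_A \tsr y^N$. Since $x^N \tsr 1_B$ and $1_A \tsr y^N$ are linearly independent, and $S_{N-1} \neq 0$ by minimality of $N$, the coefficient $1/S_{N-1}$ is nonzero, a contradiction. Hence no such $\t$ exists.
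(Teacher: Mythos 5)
Your proof is correct, and it is genuinely leaner than the paper's. The paper argues by tracking \emph{two} sequences, $\t(y \tsr x^m)$ and $\t(y^m \tsr x)$: it assumes inductively the closed forms for both (the ones established in the existence theorem just before this proposition), writes down the coupled pair of equations they satisfy in degree $n$, and then eliminates $\t(y \tsr x^{n-1})$; this elimination produces the scalar $S_{n-1}^2 - c^{n-2}$ in front of $\t(y^{n-1}\tsr x)$, which Lemma \ref{recursion}(2) identifies as $S_nS_{n-2}$ — vanishing at the minimal root while the right-hand side $S_{n-2}(x^n \tsr 1_B + cS_{n-1}\, 1_A \tsr y^n)$ does not. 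Your recursion $(\dagger)$ sidesteps the coupled system entirely: by nesting the factorizations $y^m = y^{m-1}y$ and $x^2 = x\cdot x$, and noting that $(\mu_A \tsr 1_B)(1_A \tsr v_m) = v_m$ holds with no invertibility assumption, you reduce everything to a first-order recursion in the single sequence $v_m = \t(y^m \tsr x)$, and the vanishing coefficient $1-\delta_{N-2} = S_N/S_{N-1}$ falls out of the defining recurrence $S_{m+1} = S_m - cS_{m-1}$ rather than the quadratic identity of Lemma \ref{recursion}(2). What the paper's route buys is economy in context — it reuses verbatim the formulas and the isolation step from the preceding theorem — whereas your route is self-contained, needs only the defining recurrence of the $S_n$, and makes transparent exactly where the obstruction sits: at the first $n$ with $S_n = 0$, the linear equation determining the top value has vanishing coefficient but nonvanishing right-hand side. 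Both proofs ultimately rest on that same phenomenon.
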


\begin{proof}
Let $n \geq 3$ be minimal such that $S_n = 0$. Suppose, to the contrary, that $\t: B \tsr A \to A \tsr B$ is a graded twisting map such that $\t(y \tsr x) = x^2 \tsr 1_B + c 1_A \tsr y^2$.

Assume, inductively, that 
\begin{align*}
\t(y \tsr x^{n-2}) &= S_{n-1}^{-1}(S_{n-2} x^{n-1} \tsr 1_B + c^{n-2} 1_A \tsr y^{n-1}) \\
\t(y^{n-2} \tsr x) &= S_{n-1}^{-1}(x^{n-1} \tsr 1_B + cS_{n-2} 1_A \tsr y^{n-1}).
\end{align*}

Then using the assumption that $\t$ is a twisting map we have
\begin{align*}
\t(y \tsr x^{n-1}) &= S_{n-1}^{-1}(S_{n-2} x^n \tsr 1_B + c^{n-2} \t(y^{n-1} \tsr x)) \\
\t(y^{n-1} \tsr x) &= S_{n-1}^{-1}(\t(y \tsr x^{n-1}) + c S_{n-2} 1_A \tsr y^{n}).
\end{align*}

Isolating the term $\t(y^{n-1} \tsr x)$ exactly as in the proof of the last theorem yields $$S_n S_{n-2} \t(y^{n-1} \tsr x) = S_{n-2}(x^n \tsr 1_B + S_{n-1} 1_A \tsr y^{n}).$$ Since $S_n = 0$ and $S_{n-2} \ne 0$ this is the desired contradiction.
\end{proof}

We conclude this section by remarking that we do not have a complete picture of necessary and sufficient conditions for the existence of twisting maps in the case $b \ne 0$, $ac \ne 0$. If $\k$ is closed under taking square roots, one may assume, using Proposition \ref{isomorphism type of ttp}, that the putative twisting map satisfies $\t(y \tsr x) = cx^2 \tsr 1_B + b x \tsr y + c 1_A \tsr y^2$, but even with this simplification the analysis seems complicated.

\section{Additional Examples}

In this section we give examples. Our first example uses many of the main theorems of this paper.

%
%
%

\begin{ex}
Let $A = \k \la x, y \ra$, $B = \k \la d, u \ra$. Use Theorem \ref{extendSepTau} to define a separable graded twisting map $\t$ by  $$\t(d \tsr x) = x \tsr d + 1 \tsr d^2, \ \t(d \tsr y) = y \tsr d + 1 \tsr d^2,$$

$$\t(u \tsr x) = x \tsr u + x^2 \tsr 1, \ \t(u \tsr y) = y \tsr u + y^2 \tsr 1.$$ Also assume that $\t$ satisfies the unital twisting conditions. 

Then Proposition \ref{freeKoszul} implies that $A \tsr_{\t} B$ is Koszul and moreover it is easy to check that this algebra has global dimension equal to 2. Another nice feature of this example is that $\t$ preserves $xy - yx$ so we get via Theorem \ref{inducedTau} an induced twisting map $\t': B \tsr A' \to A' \tsr B$, where $A' = \k[x,y]$. To see this, let $I$ denote the two-sided ideal of $A$ generated by $xy-yx$. The condition $\t(B \tsr I) \subset I \tsr B$ follows from the easily verified formulas: $$\t(d \tsr (xy-yx)) = (xy-yx) \tsr d$$ $$\t(u \tsr (xy-yx)) = [(xy-yx)(x+y)+(x+y)(xy-yx)] \tsr 1_B + (xy-yx) \tsr u.$$ Finally, Corollary \ref{free separable Koszul} ensures that the algebra $A' \tsr_{\t'} B$ is Koszul and it is easy to check that this algebra has global dimension equal to 3.

\end{ex}

\begin{ex}
\label{1detNotUEP}
Here we give an example of a one-determined, graded twisting map that does not have the unique extension property. 
Let $A = \k \la x \ra/ \la x^3 \ra$ and $B = \k \la y \ra/ \la y^3 \ra$. We claim that there is a unique graded twisting map
$\t:B \tsr A \to A \tsr B$ such that $\t(y \tsr x) = x^2 \tsr 1_B - x \tsr y + 1_A \tsr y^2$.

Consider an arbitrary graded linear map $t: B \tsr A \to A \tsr B$ that satisfies the unital twisting conditions and $t(y \tsr x) = x^2 \tsr 1_B - x \tsr y + 1_A \tsr y^2$.
There exist scalars $\l, \mu, \a, \b, \gamma \in\k$ such that
\begin{align*}
t(y^2 \tsr x) &= \l x^2 \tsr y + \m x \tsr y^2,\\ 
t(y \tsr x^2) &= \a x^2 \tsr y + \b x \tsr y^2,\\
t(y^2\tsr x^2) &= \gamma x^2 \tsr y^2.
\end{align*}
It is straightforward to check that in order for $t$ to be graded twisting to degree 3, we must have $\a=\l+1$ and $\b=\m-1$. Considering the factorization $y^2\tsr x^2=(1\tsr\mu_A)(y^2\tsr x\tsr x)$ shows that for $t$ to be twisting to degree $4$ it is necessary that $\gamma=\l+\m^2$. 

Define, for all $\l, \m \in \k$, a two-parameter family of linear maps $\t_{\l, \m}: B \tsr A \to A \tsr B$ by 
\begin{align*}
\t_{\l, \m}(y \tsr x) &= x^2 \tsr 1_B - x \tsr y + 1_A \tsr y^2,\\ 
\t_{\l, \m}(y^2 \tsr x) &= \l x^2 \tsr y + \m x \tsr y^2,\\ 
\t_{\l, \m}(y \tsr x^2) &= (\l+1) x^2 \tsr y + (\m-1)x \tsr y^2.\\
\t_{\l,\m}(y^2\tsr x^2) &= (\l+\m^2)x^2y^2.\\
\end{align*}
Also assume that $\t_{\l, \m}$ satisfies the unital twisting conditions. Then $\t_{\l, \m}$ is a graded twisting map if and only if $\l = -1$ and $\m = 0$. To see this, consider
\begin{align*}
0&=\t((1\tsr \m_A)(y\tsr x\tsr x^2))\\
&=(\mu_A\tsr 1_B)(1\tsr \t)( \t(y\tsr x) \tsr x^2)\\ 
&= -(\m -1)x^2\tsr y^2 + \t(y^2\tsr x^2)\\
&=(-\m+1)x^2y^2+ (1_A\tsr \m_B)(\t\tsr 1)(y\tsr \t(y\tsr x^2)) \\
&= (\l+1)^2x^2y^2.
\end{align*}
So $\l=-1$. Then
\begin{align*}
0&=\t((\m_B\tsr 1)(y\tsr y^2\tsr x))\\
&=(1\tsr \mu_B)(\t\tsr 1)(y\tsr \t(y^2\tsr x))\\
&=(1\tsr \mu_B)(\l\t(y\tsr x^2)\tsr y+\m\t(y\tsr x)\tsr y^2)\\
&=\m x^2y^2
\end{align*}
so $\m=0.$ We leave it to the reader to complete the verification that $\t=\t_{-1,0}$ is indeed a graded twisting map; it remains to show that $\t(\mu_B\tsr 1_A)(y\tsr y\tsr x^2) = (1_A\tsr \mu_B)(\t\tsr 1_B)(1_B\tsr \t)(y\tsr y\tsr x^2)$.

Since $\t_{-1,0}$ is the only graded twisting map $B\tsr A\to A\tsr B$ such that $y\tsr x\mapsto x^2\tsr 1-x\tsr y+1\tsr y^2$, it follows that $\t$ is one-determined. However, $\t_{-1,0}$ does not have the unique extension property. Recall that the unique extension property for a graded twisting map $\t$ fails in degree $n$ whenever another linear map $\t'$ is graded twisting to degree $n$ and satisfies $(\t')_{<n}=\t_{<n}$ and $\t'_n\neq \t_n$. \emph{Note that $\t'$ need not be a graded twisting map.}

Since all of the linear maps $\t_{\l,\m}$ are graded twisting to degree 3, all are identical on $(\t_{\l,\m})_{\le 2}$, and all differ in degree 3, we see that $\t_{-1,0}$ does not have the unique extension property to degree 3. Hence $\t_{-1,0}$ does not have the unique extension property. 
\end{ex}

\begin{ex}
\label{Ugh3}
Here is an example that shows that $A \tsr_{\t} B$ Koszul does not imply that $A$ and $B$ are quadratic. In \cite{CG} the algebra $U(\g_3)$ presented as the quotient of the free algebra $\Q\la x_{ij}\ |\ 1\le i\neq j\le 3\ra$ by the homogeneous ideal generated by 
\begin{align*}
[x_{ij},x_{ik}+x_{jk}]\qquad  & i, j, k\text{ distinct }\\ 
[x_{ik},x_{jk}]\qquad & i, j, k\text{ distinct, }\\   
\end{align*}
where $[a,b]=ab-ba$, was shown to be Koszul. Furthermore in \cite{CG}  it was proved that $U(\g_3)$ is isomorphic to $A \tsr_{\t} \Q \la x_{12}, x_{21} \ra$ where $A$ is isomorphic to the subalgebra of $U(\g_3)$ generated by $\{x_{13}, x_{23}, x_{31}, x_{32}\}$; the twisting map $\t$ in this case is one-sided so the subalgebra $A$ is normal. Finally, it was proved in \cite{CG} that the algebra $A$ is not finitely presented. 
\end{ex}

\begin{ex}
\label{2sidedTTP}

Let $A = \k[x]$, $B = \k \la d,u \ra$, and let $\t:B\tsr A\to A\tsr B$ be the separable, graded twisting map uniquely determined by $$\t(d \tsr x) = x \tsr d + 1 \tsr d^2, \ \ \  \t(u \tsr x) = x^2 \tsr 1 + x \tsr u.$$
For the sake of readability, we will suppress the tensor symbol for the rest of the example. We claim that there do not exist non-trivial one-generated, graded $\k$-algebras $C$ and $D$ and a one-sided graded twisting map $\t':D\tsr C\to C\tsr D$ such that $A\tsr_{\t}B \cong C\tsr_{\t'} D$ as graded algebras. This shows that the class of twisted tensor products arising from two-sided graded twisting maps strictly contains the class arising from one-sided twisting maps. Furthermore, since $A$ and $B$ are free algebras, $A\tsr_{\t'} B$ is Koszul by Proposition \ref{freeKoszul}. 

To the contrary, suppose such $C$, $D$, and $\t'$ exist. Then there are vector space isomorphisms
$$A_1\oplus B_1 \cong (A\tsr_{\t} B)_1 \cong (C\tsr_{\t'}D)_1\cong C_1\oplus D_1$$
and, since neither $C$ nor $D$ is the trivial algebra, either $\dim_{\k} C_1 = 1$ or $\dim_{\k} D_1=1$. A dimension count in homogeneous degree 2 shows that $C$ and $D$ must be free.

There is no loss of generality in assuming $\dim_{\k} C_1=1$. To see why, first note that $A\tsr_{\t} B$ is isomorphic to its opposite algebra via $x\mapsto x, d\mapsto -d$, and $u\mapsto -u$. Thus the same is true for $C\tsr_{\t'} D$. Let $s:C\tsr D\to D\tsr C$ be the ``trivial'' twisting map $s(c\tensor d)=d\tsr c$. It is straightforward to check that $\t''=s\t' s$ is a graded twisting map $C^{op}\tsr D^{op}\rightarrow D^{op}\tsr C^{op}$. Moreover, $s$ is an algebra isomorphism $C\tsr_{\t'} D \to (D^{op}\tsr_{\t''} C^{op})^{op}$.

Let $C=\k\la a\ra$ and $D=\k\la b,c\ra$. Let $\varphi:A\tsr_{\t} B\to C\tsr_{\t'}D$ be an isomorphism. Let $\a_1, \a_2, \a_3, \b_1, \b_2, \b_3, \gamma_1, \gamma_2, \gamma_3\in \k$ such that 

\begin{align*}
\varphi(x) &= \a_1 a+\b_1 b+\gamma_1 c\\
\varphi(u) &= \a_2 a+\b_2 b+\gamma_2 c\\
\varphi(d) &= \a_3 a+\b_3 b+\gamma_3 c.\\
\end{align*}

Since $\t'$ is one-sided, either $\t'(D_1\tsr C_1)\subseteq C\tsr D_+$ or $\t'(D_1\tsr C_1)\subseteq C_+\tsr D$. We examine the latter case, leaving the former case to the reader. 

Expanding the difference $\varphi(x^2+xu)-\varphi(u)\varphi(x)$ gives

\begin{align*}
0&=\varphi(x^2+xu)-\varphi(u)\varphi(x)\\
&=\a_1^2 a^2 + (\a_1\b_1+\a_1\b_2-\a_2\b_1)ab+(\a_1\gamma_1+\a_1\gamma_2-\a_2\gamma_1)ac\\
& + \b_1^2 b^2 + (\b_1\gamma_1+\b_1\gamma_2-\b_2\gamma_1)bc + (\b_1\gamma_1+\b_2\gamma_1-\b_1\gamma_2) + \gamma_1^2 c^2\\
& + (\a_1\b_1+\a_2\b_1-\a_1\b_2)\t'(b\tsr a) + (\a_1\gamma_1+\a_2\gamma_1-\a_1\gamma_2)\t'(c\tsr a).\\
\end{align*}

By assumption, $\t'(D_1\tsr C_1)\subseteq C_+\tsr D$, so the last two terms do not contain $b^2$ or $c^2$. It follows that $\b_1=\gamma_1=0$. The analogous calculation for $\varphi(xd+d^2)-\varphi(d)\varphi(x)$ shows that $\b_3=\gamma_3=0$, leaving us with
$$0=\varphi(xd+d^2)-\varphi(d)\varphi(x)=\a_3^2 a^2$$
so $\a_3=0$, and hence $\varphi(d) =0$, a contradiction.

\end{ex}

\noindent {\bf Acknowledgement.} We thank the anonymous referee for numerous helpful suggestions.

\bibliographystyle{plain}
\bibliography{bibliog2}

\end{document}